\date{}
\def\cabl#1{\lefthalfcup#1}
\def\cabr#1{\righthalfcup#1}
\def\catl#1{\lefthalfcap#1}
\def\catr#1{\righthalfcap#1}
\theoremstyle{theorem}
\newtheorem{theo}{Theorem}[section]
\newtheorem{lemm}[theo]{Lemma}
\newtheorem{prop}[theo]{Proposition}
\theoremstyle{remark}
\newtheorem{rema}[theo]{Remark}
\newtheorem{exam}[theo]{Example}
\theoremstyle{definition}
\newtheorem{defi}[theo]{Definition}
\numberwithin{equation}{section}
\numberwithin{figure}{section}
\author{Mikhail Chernavskikh, Ivan Dynnikov}
\thanks{}
\address{\noindent Steklov Mathematical Institute of Russian Academy of Sciences, 8 Gubkina Str., Moscow 119991, Russia}
\email{mike.chernavskikh@gmail.com}
\address{\noindent Steklov Mathematical Institute of Russian Academy of Sciences, 8 Gubkina Str., Moscow 119991, Russia}
\email{dynnikov@mech.math.msu.su}
\email{}
\title{Rectangular diagrams of foliations}
\begin{document}
\maketitle

\begin{abstract}
A concept of a rectangular diagram of a foliation in the three-sphere
is introduced. It is shown that any co-orientable finite depth foliation in
the complement of a link admits a presentation by a rectangular diagram
compatible with the given rectangular diagram of the link.
\end{abstract}

\tableofcontents

\section{Introduction}
The aim of this paper is to propose a systematic way to represent
codimension-one foliations in link complements in the three-sphere. Among such foliations,
taut ones have received most attention in the literature.

Recall that a codimension-one foliation in a compact three-manifold~$M$ is \emph{taut}
if, for any leaf~$F$ of the foliation, there is a closed curve in~$M$
intersecting~$F$ and transverse to the foliation. If~$M$
has a non-empty boundary~$\partial M$, one typically requires that
the foliation is transverse to~$\partial M$. The boundary~$\partial M$
then must be a union of tori.

The importance of taut foliations comes, in particular,
from the fact that they can be used to detect the Seifert
genus of a link. Namely, a Seifert surface of a non-split link
can be included as a leaf into a taut foliation in the link
complement if and only if it has smallest possible genus.
In the `only if' direction this is a result of W.\,Thurston~\cite{thurston86},
and the converse was proved by D.\,Gabai~\cite{gabai83}.
If a link is split, then, by Novikov's theorem~\cite{novikov65}, its
complement supports no taut foliation.

In order to use taut foliations in applications, it would
be convenient to have a uniform way of representing them
by diagrams. Since foliations are quite complicated objects,
this is not obvious how to do, and to date, such a way has not been worked out. There
are two mutually related techniques that are commonly used
to confirm the \emph{existence} of a taut foliation in the given three-manifold:
sutured manifold hierarchies and branched surfaces. Such objects are usually presented
by a few pictures accompanied by a verbal description that contain
some combinatorial information about the foliation structure, but don't
specify a unique foliation up to isotopy.

The method proposed in this paper for presenting foliations allows one
to put into a single diagram all information needed to recover a foliation,
which includes not only combinatorial data, but also
holonomy maps. We achieve this by extending to foliations the formalism
of so called rectangular diagrams developed previously for links and surfaces in the
three-sphere.

The idea behind the rectangular diagram formalism is that we can represent
one-dimensional objects in~$\mathbb S^3$ such as links and graphs by
finite families of points in the two-torus~$\mathbb T^2$.
To do so, we view~$\mathbb S^3$ as a join of two circles, which is, by definition,
a quotient space of~$\mathbb T^2\times[0;1]$. So, every point of~$\mathbb T^2$
represents an arc in~$\mathbb S^3$. In this way, by following certain rules,
we can represent the one-dimensional skeleton of a surface so that the entire
surface is recovered uniquely. Once we have managed to represent individual
surfaces by collections of points in~$\mathbb T^2$, it is natural to try to represent
one-parametric families of surfaces and foliations by collections of curves.

There are, however, two technical difficulties in realizing this idea. First,
in order to be presented by a rectangular diagram, a surface must be `normalized'.
Normalizing an individual surface is not hard, but normalizing all leaves
of a foliation simultaneously is never possible. To overcome this, we allow finitely many leaves
to branch and to leave `cavities' between the branches. The genuine foliation
is obtained by deflating the cavities.

Another difficulty occurs with allowing the leaves to have non-empty boundary. Instead,
we consider foliations having each boundary component of the link complement as a leaf.
The definition of a taut foliation is modified by exempting such leaves from the requirement to
have an intersection with a closed transversal. If a foliation in the link complement
is taut in this sense, then by removing a small open collar neighborhood of the boundary
of the manifold one can obtain a taut foliation that is transverse to the new boundary.

With these two reservations in mind, we prove that any
finite depth taut foliation in the complement of a link
can be represented by what we call a rectangular diagram of a foliation.
As our anonymous referee rightly points out, the tautness of a foliation is not important
for the proof; our result extends, with almost no additional work, to any finite depth foliation of~$\mathbb S^3$.
We concentrate mostly on taut foliations because of their particular importance.

Our interest to this approach is due to the previously discovered wonderful properties
of rectangular diagrams of links and surfaces. Most remarkable is the fact
that rectangular diagrams of links allow one to recognize the unknot, split and composite links
by a monotonic simplification, that is, by searching sequences
of elementary moves that do not increase the complexity of the diagram~\cite{dynn06}.

Another important property of rectangular diagrams is their relation to contact topology:
all links represented by rectangular diagrams are Legendrian, and all surfaces are Giroux convex,
with respect to two contact structures on~$\mathbb S^3$: the standard  one and its mirror image.
These facts as well as the interpretation of elementary
transformations of rectangular diagrams in the contact topology terms have been used to solve the problem
of algorithmic classification of Legendrian and transverse links in~$\mathbb S^3$
(see~\cite{dynpras25,dynshast}).

One of the goals we have in mind when developing the rectangular diagram formalism
for foliations is an extension of the monotonic simplification approach of~\cite{dynn06}
to general links. It sounds plausible that, for every topological link type,
there are only finitely many combinatorial types of rectangular diagrams
representing this link type that do not admit a simplification by elementary moves.
As follows from~~\cite{dynpras25,dynshast}, classfying such diagrams is almost equivalent to classifying Legendrian link types
that do not admit a Legendrian destabilization. We think that a diagrammatic
presentation of taut foliations whose leaves, with finitely many exceptions, are Giroux
convex could be useful for that, since it might allow for a far-reaching generalization
of the method of the work~\cite{etnyrehonda}, where Legendrian Figure Eight knots
have been classified by analyzing the fibration in the knot complement
from the contact topology point of view.

The paper is organized as follows. In Section~\ref{links-and-surfaces-sec} we recall
the notions of a rectangular diagram of a link and a rectangular diagram of a surface,
and generalize the latter slightly. In Section~\ref{deform-sec} we discuss representing
thickened surfaces by means of rectangular diagrams. In Section~\ref{tube-sec}
we show how to represent the boundary of a tubular neighborhood
of a link by a rectangular diagram. In Section~\ref{pack-sec} we introduce
the building blocks from which rectangular diagrams of foliations
will be built. In Section~\ref{discs-sec} we consider a toy example, the
Reeb foliation in a solid torus, and reveal difficulties
with representing all leaves simultaneously by rectangular diagrams.
In Section~\ref{cavit-sec} we formalize the main obstacle to that in
general and explain how to cope with it. In Section~\ref{foli-sec}
we introduce our key object, a rectangular diagram of a foliation,
and formulate the main result, which states that every co-orientable finite depth
taut foliation in the complement to a non-split link admits a presentation
by a rectangular diagram of a foliation. Three examples are considered in Section~\ref{examp-sec}.
Section~\ref{normal-sec} is devoted to the proof of the main result.
In Section~\ref{finite-depth-sec} we sketch the argument that allows to extend it
to arbitrary finite depth foliations in~$\mathbb S^3$.

\subsection*{Acknowledgement}
This work was performed at the Steklov International Mathematical Center and supported by the Ministry of Science and Higher Education of the Russian Federation (agreement no. 075-15-2025-303). We are very grateful to the referee for pointing out several
inaccuracies in the original version of the paper.

\section{Rectangular diagrams of links and (quasi-)surfaces}\label{links-and-surfaces-sec}
In this section, we recall definitions of rectangular diagrams and some of their transformations,
mostly following~\cite{Representability,Distinguishing,Basic_moves}.
In addition to surfaces with corners considered previously in~\cite{Representability}, here we will need to deal with
not necessarily compact surfaces with a few identifications made
between boundary points, and call such objects \emph{quasi-surfaces}. The concept
of a rectangular diagram of a surface is extended here accordingly.

We denote by~$\mathbb T^2$ the two-torus~$\mathbb S^1\times\mathbb S^1$ and use~$\theta$ and~$\varphi$
for coordinates on the first and second copies of the circle~$\mathbb S^1$, respectively.

\begin{defi}
A \emph{rectangular diagram of a link} is a finite set $R$ of points in $\mathbb T^2$ such that every
meridian~$\{\theta\}\times\mathbb S^1$ and every longitude~$\mathbb S^1\times\{\varphi\}$ of $\mathbb T^2$ contains either no or exactly two points from $R$. The points of $R$ are called \emph{vertices} of $R$. If two vertices~$v$, $w$ of~$R$ lie
on the same meridian, the pair~$\{v,w\}$ is called \emph{a vertical edge} of~$R$, and if they are
on the same longitude, the pair~$\{v,w\}$ is called \emph{a horizontal edge} of~$R$.
\end{defi}

With any rectangular diagram of a link we associate a link in $\mathbb S^3$ as follows.  We view the three-sphere $\mathbb S^3$ as the join of two circles:
\begin{equation}\label{join-eq}
    \mathbb S^3 = \mathbb S^1 * \mathbb S^1 = \mathbb S^1 \times \mathbb S^1 \times [0; 1] /\bigl((\theta, \varphi, 0) \sim (\theta', \varphi, 0), (\theta, \varphi, 1)\sim (\theta, \varphi', 1)\ \forall\theta,\theta',\varphi,\varphi'\in\mathbb S^1\bigr),
\end{equation}
and use~$\tau$ to denote the coordinate on~$[0;1]$. The image of a point~$(\theta,\varphi,\tau)$
under the projection~$\mathbb S^1\times\mathbb S^1\times[0;1]\rightarrow\mathbb S^3$ will be denoted by~$[\theta,\varphi,\tau]$.

For a point $v = (\theta, \varphi) \in \mathbb T^2$, we denote by $\widehat v$  the arc~$\{[\theta,\varphi,\tau]\}_{\tau\in[0;1]}$. For any rectangular diagram of a link $R$, the \emph{associated link}~$\widehat R$
is defined as
$$\widehat R = \bigcup_{v\in R} \widehat v.$$

\begin{defi}
A \emph{rectangle} $r$
in the torus $\mathbb T^2 = \mathbb S^1 \times \mathbb S^1$ is a subset of the form $r=[\theta_1; \theta_2] \times [\varphi_1; \varphi_2]$, where $\theta_1, \theta_2, \varphi_1, \varphi_2 \in \mathbb S^1$ and $\theta_1 \ne \theta_2$, $\varphi_1 \ne \varphi_2$.
In this case, we also denote
$$\begin{aligned}
\theta_-(r)&= \theta_1,&\theta_+(r)&= \theta_2,&\varphi_-(r)&= \varphi_1,&\varphi_+(r)&= \varphi_2,\\
\cabl r&=(\theta_1,\varphi_1),&\catl r&=(\theta_1,\varphi_2),&\cabr r&=(\theta_2,\varphi_1),&\catr r&=(\theta_2,\varphi_2).
\end{aligned}$$
\end{defi}

\begin{defi}
We call two rectangles $r_1$, $r_2$ \emph{compatible} if their intersection $r_1\cap r_2$ satisfies one of the following conditions:
\begin{enumerate}
\item $r_1\cap r_2 = \varnothing$;
\item $r_1 \cap r_2$ is a subset of the set of vertices of $r_1$ (which is then a subset of the set of vertices of~$r_2$);
\item $r_1 \cap r_2$ is a rectangle free of vertices of rectangles $r_1$ and $r_2$.
\end{enumerate}	
\end{defi}

\begin{defi}
\label{def:diagram_quasi-surface}
A \emph{rectangular diagram of a quasi-surface} is a countable (possibly finite) collection $\Pi$ of pairwise
compatible rectangles in $\mathbb T^2$ such that
\begin{enumerate}
\item[(i)]
the two subsets of~$\mathbb S^1$
\begin{equation}\label{ThetaPhi-eq}
\Theta(\Pi)=\bigcup_{r \in \Pi} \{\theta_-(r), \theta_+(r)\},\quad\text{and}\quad
\Phi(\Pi)=\bigcup_{r \in \Pi} \{\varphi_-(r), \varphi_+(r)\}
\end{equation}
are discrete;
\item[(ii)]
for any~$x\in\mathbb S^1$, the number of rectangles~$r$ in~$\Pi$ such that

$$x\in\{\theta_-(r),\theta_+(r),\varphi_-(r),\varphi_+(r)\}$$
is finite.
\end{enumerate}

When~$\Pi$ is a rectangular diagram of a quasi-surface, any vertex of any rectangle~$r\in\Pi$
is called \emph{a vertex of}~$\Pi$. A vertex is \emph{free} if there is exactly one
such rectangle~$r$ in~$\Pi$.
By the \emph{boundary} of a rectangular diagram of a quasi-surface $\Pi$ we mean the set of all the free vertices of $\Pi$.
We denote the boundary of $\Pi$ by $\partial \Pi$. If~$\partial \Pi$ is a rectangular diagram of a link, then $\Pi$ is called a \emph{rectangular diagram of a surface}.
\end{defi}

\begin{rema}
In earlier papers~\cite{Representability,Distinguishing,Basic_moves} a rectangular diagram of a surface was required to be a finite collection of rectangles.
Here we allow it to be countable.
\end{rema}

\begin{defi}
Let~$\Pi$ be a rectangular diagram of a quasi-surface. By \emph{an orientation} on~$\Pi$ we mean
a function~$\epsilon:\Pi\to \{\pm 1\}$ such that
\begin{enumerate}
\item[(i)]
whenever~$r_1,r_2\in\Pi$, and any of the following conditions holds:
\begin{equation}\label{orientation1-eq}
\theta_-(r_1)=\theta_-(r_2)\quad\text{or}\quad
\theta_+(r_1)=\theta_+(r_2)\quad\text{or}\quad
\varphi_-(r_1)=\varphi_-(r_2)\quad\text{or}\quad
\varphi_+(r_1)=\varphi_+(r_2),
\end{equation}
we have~$\epsilon(r_1)=\epsilon(r_2)$;
\item[(ii)]
whenever~$r_1,r_2\in\Pi$, and any of the following conditions hold:
\begin{equation}\label{orientation2-eq}
\theta_-(r_1)=\theta_+(r_2)\quad\text{or}\quad
\varphi_-(r_1)=\varphi_+(r_2)
\end{equation}
we have~$\epsilon(r_1)\ne\epsilon(r_2)$.
\end{enumerate}
A pair~$(\Pi,\epsilon)$ in which~$\Pi$ is a rectangular diagram of a (quasi-)surface, and~$\epsilon$ is an
orientation on~$\Pi$ is called \emph{an oriented rectangular diagram of a \emph(quasi-\emph)surface}.
Rectangles~$r\in\Pi$ with~$\epsilon(r)=1$ (respectively, $\epsilon(r)=-1$) are then referred to as \emph{positive}
(respectively, \emph{negative}).
\end{defi}

For a subset~$X\in\mathbb R^3$ we denote by~$[0;1)\cdot X$ the open cone~$\{\lambda p\;:\;\lambda\in[0;1),\ p\in X\}$.

\begin{defi}\label{quasi-surf-def}
We say that a subset $F\subset \mathbb S^3$ is a \emph{quasi-surface} if, for any~$p\in F$, there exist
an open neighborhood~$U$ of~$p$ in~$\mathbb S^3$ and a $C^1$-diffeomorphism~$f$ from~$U$
to the open unit ball in~$\mathbb R^3$ such that~$f(F\cap U)$ has the form~$[0;1)\cdot X$,
where~$X$ is a compact one-dimensional submanifold of the equator of the unit sphere in~$\mathbb R^3$.

When~$F\subset\mathbb S^3$ is a quasi-surface, by \emph{the boundary} of~$F$ denoted~$\partial F$
we mean the set of all points in~$F$ that do not have a neighborhood in~$F$ homeomorphic to an open two-disc.
\end{defi}

The concept of a quasi-surface generalizes the notion of a surface with corners used in~\cite{Representability}
in two ways. First, a quasi-surface is not required to be compact. Second, it need not be, topologically, a two-dimensional
manifold, but it is always contained in a two-dimensional manifold and bounded in this manifold by a
union of smooth arcs.

With any rectangular diagram of a quasi-surface~$\Pi$ we associate a quasi-surface denoted by~$\widehat\Pi$
exactly in the same way as a surface with corners is associated
with a rectangular diagram of a surface in~\cite{Representability}. We recall the construction briefly.

For any rectangle~$r=[\theta_1;\theta_2]\times[\varphi_1;\varphi_2]\subset\mathbb T^2$,
we denote by~$h_r$ a bounded harmonic function on~$r\setminus\partial\{r\}$
(we use the notation~$\partial\{r\}$ for the set of vertices of~$r$, which agrees with Definition~\ref{def:diagram_quasi-surface})
such that~$h_r$ is identically zero at~$(\theta_1;\theta_2)\times\{\varphi_1,\varphi_2\}$
and identically one at~$\{\theta_1,\theta_2\}\times(\varphi_1;\varphi_2)$. Such a harmonic function
is unique, and an exact formula for it using the Weierstrass function can be found in~\cite{Representability}. By means of separation of variables one can get another
explicit expression for~$h_r$, which is the folowing one:
\begin{equation}\label{h_r-series-eq}
h_r(\theta,\varphi)=\frac4\pi\sum_{n=0}^\infty\frac{\sin\bigl(c_n(\varphi-\varphi_1)\bigr)\bigl(e^{c_n(\theta-\theta_1)}+e^{c_n(\theta_2-\theta)}\bigr)}
{(2n+1)\bigl(1+e^{c_n(\theta_2-\theta_1)}\bigr)},
\end{equation}
where
$$c_n=\frac{2n+1}{\varphi_2-\varphi_1}\pi.$$

We compose~$h_r$ with the function
\begin{equation}\label{zeta-func-eq}\zeta(x)=(2/\pi)\cdot\arctan\sqrt{\tan(\pi x/2)}
\end{equation}
(whose graph is shown in
Figure~\ref{atan_sqrt_tan-fig}) and denote the result by~$\widetilde h_r$:
$$\widetilde h_r(v)=\zeta(h_r(v)).$$
Let~$\widetilde r\subset\mathbb T^2\times[0;1]$ be the graph of the function~$\widetilde h_r$ on~$r\setminus\partial\{r\}$.
We define the \emph{tile}~$\widehat r$ associated with~$r$ as the closure of the image of~$\widetilde r$
in~$\mathbb S^3$
under identifications~\eqref{join-eq}.

\begin{figure}[ht]
    \centerline{\includegraphics[scale =.65]{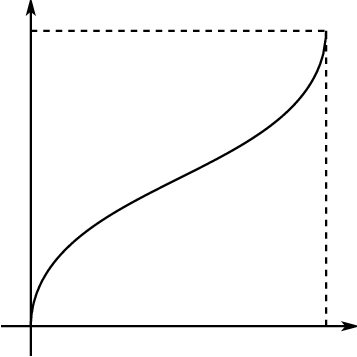}
    \put(-13, 0){$1$}
    \put(-110, 100){$1$}
    \put(-5, 13){$x$}
    \put(-98, 110){$\zeta(x)$}
    }
    \caption{The graph of $\zeta$}\label{atan_sqrt_tan-fig}
\end{figure}

The association~$r\mapsto\widehat r$ defined above has the following properties (see the proof of Proposition~1
in~\cite{Representability}):
\begin{enumerate}
\item
the disc~$\widehat r$ is contained in the three-simplex~$[\theta_-(r);\theta_+(r)]*[\varphi_-(r);
\varphi_+(r)]\subset\mathbb S^1*\mathbb S^1=\mathbb S^3$;
\item
the boundary~$\partial\widehat r$ is the unknot~$\widehat{\partial\{r\}}=
\{\theta_-(r),\theta_+(r)\}*\{\varphi_-(r),\varphi_+(r)\}$;
\item
when two compatible rectangles~$r_1$ and~$r_2$ share a vertex~$v$, the tangent plane to the discs~$\widehat r_1$
and~$\widehat r_2$ at any point of the arc~$\widehat v$ coincide, and, moreover these
discs approach~$\widehat v$ from opposite sides, so~$\widehat r_1\cup\widehat r_2$ is a surface
with corners;
\item
when two rectangles~$r_1$ and~$r_2$ are compatible, share no corners, and a vertical (respectively, horizontal) side of~$r_1$
lies on the same meridian (respectively, longitude) as a vertical (respectively, horizontal) side of~$r_2$,
the discs~$\widehat r_1$ and~$\widehat r_2$ meet at the corresponding point at~$\mathbb S^1_{\tau=1}$
(respectively, $\mathbb S^1_{\tau=0}$), have the same tangent plane at
this point, and approach it in such a way that~$\widehat r_1\cup\widehat r_2$ is a quasi-surface;
\item
when two rectangles~$r_1$ and~$r_2$ are compatible and~$\{\theta_-(r_1),\theta_+(r_1)\}
\cap\{\theta_-(r_2),\theta_+(r_2)\}=\varnothing=
\{\varphi_-(r_1),\varphi_+(r_1)\}\cap\{\varphi_-(r_2),\varphi_+(r_2)\}$,
the discs~$\widehat r_1$ and~$\widehat r_2$ are disjoint.
\end{enumerate}
If $r$ and $r'$ are two compatible rectangles such that~$r\cap r'$ is a rectangle
and~$r$ has smaller $\theta$-dimension than~$r'$, then~$h_r(u)>h_{r'}(u)$
for any~$u\in r\cap r'$. So, with respect to the projection to the torus~$\mathbb T^2_{\tau=1/2}$
along the $\tau$-direction, the rectangle~$r$ `overpasses' $r'$.
For this reason we say, in this situation, that~$r$ \emph{overlays}~$r'$,
and draw such rectangles in the pictures accordingly.

\begin{prop}
    Let $\Pi$ be a rectangular diagram of a quasi-surface. Then the union $\widehat \Pi = \cup_{r\in\Pi}\widehat{r}$ is a quasi-surface.
\end{prop}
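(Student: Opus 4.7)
The strategy is to verify Definition~\ref{quasi-surf-def} pointwise. Given $p \in \widehat{\Pi}$, I want an open neighborhood $U \ni p$ in $\mathbb S^3$ and a $C^1$-diffeomorphism $f : U \to B$ to the open unit ball in $\mathbb R^3$ with $f(\widehat{\Pi} \cap U) = [0;1) \cdot X$ for a compact $1$-submanifold $X$ of the equator. The proof splits into two stages: first, using Definition~\ref{def:diagram_quasi-surface}(i)--(ii) together with pairwise compatibility, reduce to the contributions of finitely many tiles near $p$; then, using properties~(1)--(5) of the tile map $r \mapsto \widehat r$ listed above, assemble these tiles into the required standard local model.

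For the finiteness stage, I would case-split on the $\tau$-coordinate of $p = [\theta_0, \varphi_0, \tau_0]$. If $\tau_0 \in (0;1)$, the fact that $\widetilde h_r$ equals $0$ on the interiors of the horizontal edges of $r$ and $1$ on the interiors of its vertical edges forces $(\theta_0, \varphi_0)$ to lie in the interior of some rectangle $r \in \Pi$ or at a vertex of $\Pi$. In the interior-of-$r$ subcase, any second rectangle $r'$ with $p \in \widehat{r'}$ would also contain $(\theta_0, \varphi_0)$ interiorly, putting the pair $r, r'$ into compatibility case~(3); but case~(3) satisfies the hypotheses of property~(5), so $\widehat{r} \cap \widehat{r'} = \varnothing$, contradicting $p \in \widehat r \cap \widehat{r'}$. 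In the vertex subcase, a direct check using compatibility excludes two rectangles sharing a vertex in the same or adjacent quadrants, so at most two rectangles meet at the vertex and they occupy opposite quadrants. If $\tau_0 \in \{0, 1\}$, Definition~\ref{def:diagram_quasi-surface}(ii) bounds the number of rectangles whose edge collapses onto $p$, and (i) keeps all other rectangles away from a small neighborhood of $p$.

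For the local-model stage, I would invoke properties~(1)--(5) case by case. At an interior point of a single tile, property~(1) alone makes $\widehat{\Pi}$ look like an open disc. Along the interior of an arc $\widehat v$ inside $\widehat{\Pi}$, property~(3) joins the (at most two) incident tiles smoothly with matching tangent planes into an open disc as well. At a free vertex, the unique incident tile supplies a half-disc. At points on the cone circles $\mathbb S^1_{\tau=0}$ or $\mathbb S^1_{\tau=1}$, property~(4) forces all finitely many incident tiles to share a common tangent plane at $p$; compatibility makes their remaining (non-collapsed) coordinate ranges pairwise disjoint, so the tiles fan out as sectors inside that plane, yielding a local model $[0;1) \cdot X$ with $X$ a disjoint union of closed arcs of the equator, one per incident tile.

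I expect the main obstacle to be the construction of the $C^1$-diffeomorphism $f$ at a cone-circle point, where property~(4) supplies only pointwise tangential agreement but several tiles collapse their sides to a common point of $\mathbb S^3$. To pass from tangent coincidence to a simultaneous $C^1$ straightening of all incident tiles onto their common tangent plane, one will likely need the series expansion~\eqref{h_r-series-eq} together with the square-root asymptotics of $\zeta$ near $0$ and $1$ to extract the leading-order shape of each tile at $p$, and then glue a single chart.
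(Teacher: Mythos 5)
Your outline correctly handles the multiplicity of tiles \emph{passing through} a given point $p$ (the interior, vertex, and cone-circle counts are all right, and the observation that compatibility case~(3) implies the hypothesis of property~(5) is a valid check), but it conflates that with what the definition of a quasi-surface actually requires: a neighborhood $U$ of $p$ in which $\widehat\Pi\cap U$ has the standard conical form. Since $\Pi$ may be countably infinite, you must also show that only finitely many tiles \emph{meet} some neighborhood of $p$, i.e.\ that the family $\{\widehat r\}_{r\in\Pi}$ cannot accumulate at a point of $\widehat\Pi$. Your proposal addresses this only for $\tau_0\in\{0,1\}$ (and even there only by the unquantified assertion that condition~(i) ``keeps all other rectangles away,'' which needs an estimate of the type proved in Lemma~\ref{lemm:neighborhood}); for $\tau_0\in(0;1)$ it is absent. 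The danger is real: tiles do accumulate in legitimate diagrams (in Section~\ref{discs-sec} the annuli $\widehat A_k$ accumulate onto the tube $\widehat\Omega_{1/16}(R)$), and ruling out accumulation \emph{at points of $\widehat\Pi$ itself} requires an argument --- e.g.\ that no vertex of $\Pi$ can lie in the interior of another rectangle of $\Pi$, plus the strict monotonicity of $h_r(u)$ in the $\theta$-dimension of $r$ combined with the discreteness of $\Theta(\Pi)$ and $\Phi(\Pi)$. Without this, the ``local-model stage'' cannot begin, because $U\cap\widehat\Pi$ might contain pieces of infinitely many tiles for every $U$.

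This missing step is in fact the entire content of the paper's proof: the finite case is taken to follow immediately from the listed properties~(1)--(5) of $r\mapsto\widehat r$, and the new work consists of surrounding each tile $\widehat r$ by an explicit six-rectangle diagram $\Sigma$ of a sphere, showing that every $r'\in\Pi$ whose coordinate data avoids that of $r$ is compatible with all six rectangles (so $\widehat{r'}$ misses $\widehat\Sigma$) and cannot lie inside the ball bounded by $\widehat\Sigma$ (by inspecting intersections with $\mathbb S^1_{\tau=1}$), while conditions (i)--(ii) leave only finitely many exceptional $r'$. You would need to supply an equivalent of this. Conversely, the part you flag as the ``main obstacle'' --- building the $C^1$ chart at a cone-circle point --- is largely already packaged in properties~(3) and~(4) as stated (they assert that the union of two tiles meeting there \emph{is} a quasi-surface), so the effort in your plan is concentrated in the wrong place.
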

\begin{proof}
In the case when~$\Pi$ is finite the assertion of the proposition follows immediately from the listed above properties
of the construction~$r\mapsto\widehat r$. To establish the general case it suffices
to show that, for any~$r\in\Pi$, there is an open neighborhood~$U$ of~$\widehat r$ that intersects
only finitely many tiles~$\widehat r'$ with~$r'\in\Pi$.

Let~$r=[\theta_1; \theta_2]\times[\varphi_1; \varphi_2]\in\Pi$. Since the subsets~\eqref{ThetaPhi-eq}
are discrete, there exists~$\varepsilon>0$ such that
$$\begin{aligned}\relax
[\theta_1-\varepsilon;\theta_1+\varepsilon]\cap\Theta(\Pi)&=\{\theta_1\},&
[\theta_2-\varepsilon;\theta_2+\varepsilon]\cap\Theta(\Pi)&=\{\theta_2\},\\
[\varphi_1-\varepsilon;\varphi_1+\varepsilon]\cap\Phi(\Pi)&=\{\varphi_1\},&
[\varphi_2-\varepsilon;\varphi_2+\varepsilon]\cap\Phi(\Pi)&=\{\varphi_2\}.
\end{aligned}
$$

Define~$\Sigma=\{r_1,r_2,r_3,r_4,r_5,r_6\}$, where
$$    \begin{aligned}
    r_1&= [\theta_1 - \varepsilon; \theta_2 + \varepsilon] \times [\varphi_1 + \varepsilon; \varphi_2-\varepsilon]   ,\\
    r_2&= [\theta_1 + \varepsilon; \theta_2 - \varepsilon] \times [\varphi_1 - \varepsilon; \varphi_2 + \varepsilon] ,\\
    r_3&= [\theta_1 - \varepsilon; \theta_1 + \varepsilon] \times [\varphi_2 + \varepsilon; \varphi_1 - \varepsilon] ,\\
    r_4&= [\theta_2 - \varepsilon; \theta_2 + \varepsilon] \times [\varphi_2 + \varepsilon; \varphi_1 - \varepsilon] ,\\
    r_5&= [\theta_2  + \varepsilon; \theta_1 - \varepsilon] \times [\varphi_1 - \varepsilon; \varphi_1 + \varepsilon],\\
    r_6&= [\theta_2  + \varepsilon; \theta_1 - \varepsilon] \times [\varphi_2 - \varepsilon; \varphi_2 + \varepsilon].
    \end{aligned}$$
This diagram is shown in Figure~\ref{sigma-fig}.

    \begin{figure}[ht]
        \centerline{
            \includegraphics{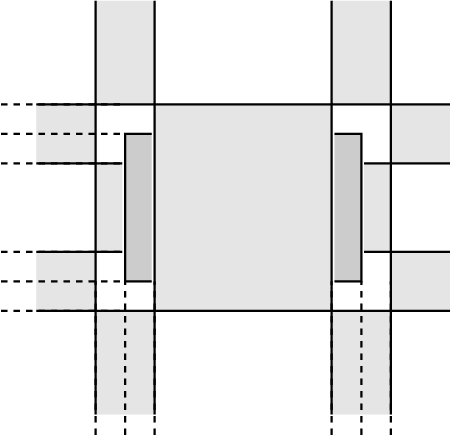}
           \put(-46, -10){\scriptsize$\theta_2$}
           \put(-73, -10){\scriptsize$\theta_2 - \varepsilon$}
           \put(-33, -10){\scriptsize$\theta_2 + \varepsilon$}
           \put(-148, -10){\scriptsize$\theta_1 + \varepsilon$}
           \put(-186, -10){\scriptsize$\theta_1 - \varepsilon$}
           \put(-160, -10){\scriptsize$\theta_1$}
           \put(-235, 145){\scriptsize$\varphi_2$}
           \put(-242, 128){\scriptsize$\varphi_2 - \varepsilon$}
           \put(-242, 160){\scriptsize$\varphi_2 + \varepsilon$}
           \put(-235, 75){\scriptsize$\varphi_1$}
           \put(-242, 55){\scriptsize$\varphi_1 - \varepsilon$}
           \put(-242, 90){\scriptsize$\varphi_1 + \varepsilon$}
           \put(-103, 110){$r_2$}
           \put(-153, 110){$r$}
           \put(-168, 110){$r_1$}
           \put(-161, 180){$r_3$}
           \put(-46, 180){$r_4$}
           \put(-20, 143){$r_6$}
           \put(-20, 73){$r_5$}
        }
        \caption{The rectangular diagram~$\Sigma$ of a sphere around~$\widehat r$}\label{sigma-fig}

    \end{figure}

By the Euler characteristic argument the surface $\widehat{\Sigma}$ is a two-sphere (the diagram
has~$6$ rectangles, $12$ vertices, $8$ meridians and longitudes containing
the vertices; $6-12+8=2$).
Each of the rectangles~$r_i$, $i=1,\ldots,6$, is compatible with~$r$, therefore, $\widehat\Sigma$ is disjoint
from~$\widehat r$. Denote by~$B$ the three-ball bounded by~$\widehat\Sigma$ whose interior contains~$\widehat r$.
We claim that~$B$ intersects only finitely many tiles of the form~$\widehat r'$, $r'\in\Pi$.

Indeed, due to the choice of~$\varepsilon$ and Condition~(ii) in Definition~\ref{def:diagram_quasi-surface},
there are only finitely many rectangles~$r'$ in~$\Pi$ such that
$$\{\theta_-(r'),\theta_+(r')\}\cap\{\theta_1,\theta_2\}\ne\varnothing
\quad\text{or}\quad
\{\varphi_-(r'),\varphi_+(r')\}\cap\{\varphi_1,\varphi_2\}\ne\varnothing.$$
Suppose that~$r'\in\Pi$ is such that none of these conditions holds. Then one can see that~$r'$
is compatible with~$r_i$, $i=1,\ldots,6$, and hence~$\widehat r'\cap\widehat\Sigma=\varnothing$.
It remains to ensure that~$\widehat r'$ is not contained in the interior of~$B$.

The intersection of~$\widehat\Sigma$ with~$\mathbb S^1_{\tau=1}$ is, by construction, the four-point set~$\{\theta_1\pm\varepsilon,
\theta_2\pm\varepsilon\}$, so the intersection~$B\cap\mathbb S^1_{\tau=1}$ is a union of two disjoint arcs.
Since~$\widehat r\subset B$ and~$\widehat r\cap\mathbb S^1_{\tau=1}=\{\theta_1,\theta_2\}$,
we have~$B\cap\mathbb S^1_{\tau=1}=[\theta_1-\varepsilon;\theta_1+\varepsilon]
\cup[\theta_2-\varepsilon;\theta_2+\varepsilon]$.
By assumption, the intersection of~$\widehat r'$ with~$\mathbb S^1_{\tau=1}$
is outside of these arcs, which implies~$\widehat r'\not\subset B$. Hence~$\widehat r'\cap B=\varnothing$.
\end{proof}

    The quasi-surface associated with an oriented rectangular diagram of a surface~$(\Pi,\epsilon)$ is oriented so that the coordinate pair $(\theta,\varphi)$ is positively oriented in the interior of~$\widehat r$ for any positive rectangle~$r\in\Pi$,
    and negatively oriented for any negative~$r\in\Pi$. Such an orientation of~$\widehat\Pi$
    does exist, since whenever two rectangles~$r_1,r_2\in\Pi$ satisfy one of the conditions~\eqref{orientation1-eq}
    or~\eqref{orientation2-eq} (which means that~$\widehat r_1\cap\widehat r_2$ is not empty)
    the orientations of~$\widehat r_1$ and~$\widehat r_2$ at common points agree.

In order to specify an orientation of a rectangular diagram of a quasi-surface~$\Pi$, we prefer to indicate the
corresponding coorienation of the surface~$\widehat\Pi$ at the intersection points with~$\mathbb S^1_{\tau=0}$
and~$\mathbb S^1_{\tau=1}$ by marking the longitudes and meridians containing
the sides of rectangles from~$\Pi$ with arrows perpendicular to them. If a rectangle~$r\in\Pi$
is positive, then the arrows at the meridians containing the vertical sides of~$r$ will
point inward the region containing~$r$, and the arrows at the longitudes containing
the horizontal sides of~$r$ will point outward. For a negative~$r$ the rule will be opposite;
see Figure~\ref{pos-neg-rect-fig}.
\begin{figure}
    \includegraphics[scale=1.5]{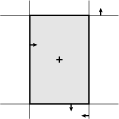}
    \hspace{1cm}
    \includegraphics[scale=1.5]{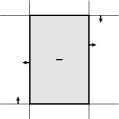}
\caption{Positive and negative rectangles}\label{pos-neg-rect-fig}
\end{figure}

\section{Deformations of rectangular diagrams of quasi-surfaces}\label{deform-sec}

For any rectangle~$r=[\theta_1;\theta_2]\times[\varphi_1;\varphi_2]$,
we define a parametrization
$$u_r:[0;1]\times[0;1]\rightarrow\widehat r$$
of the tile~$\widehat r$
as follows. First, introduce the following maps~$\iota_r$, $\lambda_r$, $\mu_r$, and~$\eta_r$ from~$r\setminus\partial r$
to~$\mathbb S^3$, $[0;1]$, $[0;1]$, and~$[0;1]\times[0;1]$, respectively:
$$\begin{aligned}
\iota_r(\theta,\varphi)&=[\theta,\varphi,\widetilde h_r(\theta,\varphi)],\\
\lambda_r(\theta,\varphi)&=\sin\Bigl(\frac\pi2\,\widetilde h_r(\theta,\varphi)\Bigr),\quad
\mu_r(\theta,\varphi)=\cos\Bigl(\frac\pi2\,\widetilde h_r(\theta,\varphi)\Bigr),\\
\eta_r(\theta,\varphi)&=\frac1{\lambda_r(\theta,\varphi)+\mu_r(\theta,\varphi)}
\left(\frac{\lambda_r(\theta,\varphi)}{\theta_2-\theta_1}(\theta-\theta_1,\theta_2-\theta)+
\frac{\mu_r(\theta,\varphi)}{\varphi_2-\varphi_1}(\varphi-\varphi_1,\varphi-\varphi_1)\right).\end{aligned}$$
The maps~$\iota_r:(r\setminus\partial r)\rightarrow\widehat r$ and~$\eta_r:(r\setminus\partial r)\rightarrow[0;1]\times[0;1]$ cannot be extended continuously to the entire
rectangle~$r$. They admit continuous extensions to the sides of~$r$, but not to the corners. The point
is that these maps deflate the sides of~$r$ to the vertices,
and `inflate' the vertices of~$r$ to the sides of~$\widehat r$ and~$[0;1]\times[0;1]$, respectively.
Figure~\ref{coordinate_lines} shows the image of a square grid in~$(0;1)\times(0;1)$ under the map~$\eta_r^{-1}$.

\begin{figure}[h]
    \includegraphics[scale = 4]{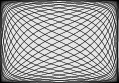}
    \caption{The image of the coordinate grid in~$(0;1)\times(0;1)$ under the map~$\eta_r^{-1}$}\label{coordinate_lines}
\end{figure}

 However, the composition~$\iota_r\circ\eta_r^{-1}$ can be shown to be extendable
to a continuous embedding~$[0;1]\times[0;1]\rightarrow\mathbb S^3$ which is smooth everywhere except
at the corners of the square~$[0;1]\times[0;1]$. We take this composition extended to the entire square~$[0;1]\times[0;1]$
for~$u_r$. The explicit formulas for the restriction of~$u_r$ to the boundary~$\partial r$
are as follows:
\begin{equation}\label{boundary-param-eq}\begin{aligned}
u_r(x,0)&=\left[\theta_2,\varphi_1,\frac2\pi\arctan\Bigl(\frac x{1-x}\Bigr)\right],\quad&
u_r(x,1)&=\left[\theta_1,\varphi_2,\frac2\pi\arctan\Bigl(\frac{1-x}x\Bigr)\right],\\
u_r(0,y)&=\left[\theta_1,\varphi_1,\frac2\pi\arctan\Bigl(\frac y{1-y}\Bigr)\right],\quad&
u_r(1,y)&=\left[\theta_2,\varphi_2,\frac2\pi\arctan\Bigl(\frac{1-y}y\Bigr)\right].
\end{aligned}\end{equation}

\begin{prop}\label{octagon-prop}
Let~$\boldsymbol\theta_1,\boldsymbol\theta_2,\boldsymbol\varphi_1,\boldsymbol\varphi_2$ be continuous maps from~$[0;1]$ to~$\mathbb S^1$ such that
\begin{enumerate}
\item[(i)]
$\boldsymbol\theta_1$ and~$\boldsymbol\varphi_2$ are strictly increasing\emph;
\item[(ii)]
$\boldsymbol\theta_2$ and~$\boldsymbol\varphi_1$ are strictly decreasing\emph;
\item[(iii)]
for all~$t\in[0;1]$, we have~$\boldsymbol\theta_1(t)\ne\boldsymbol\theta_2(t)$, $\boldsymbol\varphi_1(t)\ne\boldsymbol\varphi_2(t)$.
\end{enumerate}
Then the map
\begin{equation}\label{cube-map-eq}
(t,x,y)\mapsto u_{r(t)}(x,y),
\end{equation}
where~$r(t)=[\boldsymbol\theta_1(t);\boldsymbol\theta_2(t)]\times[\boldsymbol\varphi_1(t);\boldsymbol\varphi_2(t)]$
is a continuous embedding of the cube~$[0;1]^3$ to~$\mathbb S^3$.
\end{prop}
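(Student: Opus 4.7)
The proof will have three ingredients: continuity of the map $\Phi\colon(t,x,y)\mapsto u_{r(t)}(x,y)$, injectivity of $\Phi$, and the standard compactness argument that promotes a continuous injection into a topological embedding.

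For continuity I would argue separately on the interior $(0;1)^2$ and on $\partial[0;1]^2$. On the interior, $u_{r}(x,y)=(\iota_r\circ\eta_r^{-1})(x,y)$ depends continuously on the four vertex parameters of~$r$ because the harmonic function~$h_r$ does: the explicit series~\eqref{h_r-series-eq} converges uniformly on compact subsets of the open rectangle, together with its derivatives in the vertex parameters, so compositions with~$\zeta$ and with the trigonometric functions defining~$\iota_r$ and~$\eta_r$ preserve joint continuity in $(t,x,y)$. On $\partial[0;1]^2$, continuity is read off directly from~\eqref{boundary-param-eq}, which depend on~$t$ only through the continuous functions $\boldsymbol\theta_i(t)$, $\boldsymbol\varphi_i(t)$. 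To glue the two, one checks that as $(x,y)\to\partial[0;1]^2$ the interior parametrization tends to the boundary formulas uniformly in $t\in[0;1]$, which follows from the compactness of $[0;1]$ and the uniform estimates on $h_{r(t)}$.

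The heart of the argument is injectivity. Given $(t_1,x_1,y_1)\ne(t_2,x_2,y_2)$ in~$[0;1]^3$, I would split into cases. If $t_1=t_2$, injectivity is immediate from the fact that $u_{r(t_1)}$ is already a continuous embedding of the square onto the tile~$\widehat{r(t_1)}$. If $t_1<t_2$, the strict monotonicity conditions (i)--(ii) combined with the nondegeneracy~(iii) yield
$$\boldsymbol\theta_1(t_1)<\boldsymbol\theta_1(t_2)<\boldsymbol\theta_2(t_2)<\boldsymbol\theta_2(t_1),\qquad
\boldsymbol\varphi_1(t_2)<\boldsymbol\varphi_1(t_1)<\boldsymbol\varphi_2(t_1)<\boldsymbol\varphi_2(t_2).$$
Consequently $r(t_1)\cap r(t_2)=[\boldsymbol\theta_1(t_2);\boldsymbol\theta_2(t_2)]\times[\boldsymbol\varphi_1(t_1);\boldsymbol\varphi_2(t_1)]$ is a genuine rectangle containing no vertex of either~$r(t_1)$ or~$r(t_2)$, so $r(t_1)$ and~$r(t_2)$ are compatible in the sense of case~(3) of the compatibility definition. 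Moreover the same inequalities give $\{\boldsymbol\theta_1(t_1),\boldsymbol\theta_2(t_1)\}\cap\{\boldsymbol\theta_1(t_2),\boldsymbol\theta_2(t_2)\}=\varnothing$ and analogously for~$\varphi$. Property~(5) of the association $r\mapsto\widehat r$ then yields $\widehat{r(t_1)}\cap\widehat{r(t_2)}=\varnothing$, so $\Phi(t_1,x_1,y_1)\ne\Phi(t_2,x_2,y_2)$.

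With continuity and injectivity established, the conclusion is automatic: $[0;1]^3$ is compact and $\mathbb S^3$ is Hausdorff, so the continuous injection $\Phi$ is a topological embedding. I expect the main technical obstacle to be the careful verification of joint continuity at the four corners of~$[0;1]^2$, where $u_r$ is only continuous (not differentiable) and where the four pieces of~\eqref{boundary-param-eq} meet: one has to confirm that they collapse consistently to the same point of~$\mathbb S^3$ via the join identifications~\eqref{join-eq} uniformly in~$t$, and that the interior values converge to these common limits. Once this uniform compatibility at corners is in hand, the rest of the argument is the clean application of property~(5) outlined above.
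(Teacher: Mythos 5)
Your proposal is correct and follows essentially the same route as the paper's proof: continuity from the construction of $u_r$, injectivity for fixed $t$ from the fact that $u_{r(t)}$ is already an embedding, and injectivity across distinct $t,t'$ from the observation that $r(t)$ and $r(t')$ are compatible with rectangular intersection, whence the tiles are disjoint. The paper simply states the continuity as following "easily from the definition" and leaves the compactness-plus-Hausdorff step implicit, so your version is the same argument with the routine parts spelled out.
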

\begin{proof}
The continuity of this map follows easily from the definition. For any~$t\in[0;1]$ the
map~$u_{r(t)}:[0;1]^2\rightarrow\mathbb S^3$
is already known to be an embedding whose image is~$\widehat r(t)$. It remains to notice that, for any distinct~$t,t'\in[0;1]$,
the rectangles~$r(t)$ and~$r(t')$ are compatible and their intersection is a rectangle, which
implies that the discs~$\widehat r(t)$ and~$\widehat r(t')$ are disjoint.
\end{proof}

Figure~\ref{curved-cube-fig} demonstrates how the image of the map~\eqref{cube-map-eq}
projected stereographically to~$\mathbb R^3$ may look like.

\begin{figure}[ht]
\includegraphics[scale=.7]{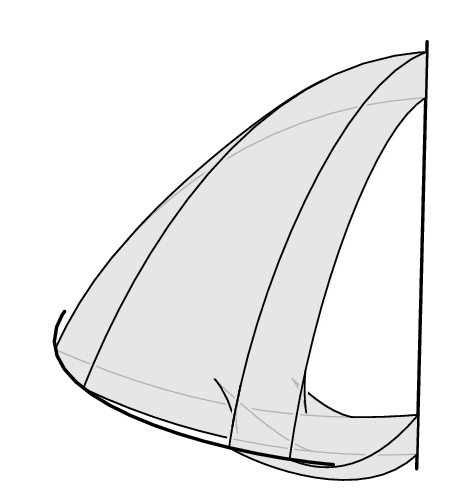}
\put(-10,80){$\mathbb S^1_{\tau=0}$}
\put(-12,13){$\boldsymbol\varphi_1(1)$}
\put(-12,28){$\boldsymbol\varphi_1(0)$}
\put(-9,134){$\boldsymbol\varphi_2(0)$}
\put(-9,150){$\boldsymbol\varphi_2(1)$}
\put(-120,18){$\mathbb S^1_{\tau=1}$}
\put(-162,48){$\boldsymbol\theta_1(0)$}
\put(-152,32){$\boldsymbol\theta_1(1)$}
\put(-95,7){$\boldsymbol\theta_2(1)$}
\put(-70,0){$\boldsymbol\theta_2(0)$}
\caption{The image of the map~\eqref{cube-map-eq}}\label{curved-cube-fig}
\end{figure}

For an oriented rectangular diagram of a quasi-surface~$(\Pi,\epsilon)$, introduce the following notation:
$$\begin{aligned}
\Theta_+(\Pi,\epsilon)&=\{\theta_-(r):r\in\Pi,\ \epsilon(r)=1\}\cup
\{\theta_+(r):r\in\Pi,\ \epsilon(r)=-1\},\\
\Theta_-(\Pi,\epsilon)&=\{\theta_-(r):r\in\Pi,\ \epsilon(r)=-1\}\cup
\{\theta_+(r):r\in\Pi,\ \epsilon(r)=1\},\\
\Phi_+(\Pi,\epsilon)&=\{\varphi_+(r):r\in\Pi,\ \epsilon(r)=1\}\cup
\{\varphi_-(r):r\in\Pi,\ \epsilon(r)=-1\},\\
\Phi_-(\Pi,\epsilon)&=\{\varphi_+(r):r\in\Pi,\ \epsilon(r)=-1\}\cup
\{\varphi_-(r):r\in\Pi,\ \epsilon(r)=1\}.
\end{aligned}$$
In other words, $\Theta_+(\Pi,\epsilon)$ (respectively, $\Theta_-(\Pi,\epsilon)$)
is the set of points~$p\in\widehat\Pi\cap\mathbb S^1_{\tau=1}$
at which the intersection index of~$\widehat\Pi$ and~$\mathbb S^1_{\tau=1}$ is~$+1$ (respectively, $-1$).
Similarly, $\Phi_+(\Pi,\epsilon)$ (respectively, $\Phi_-(\Pi,\epsilon)$)
is the set of points~$p\in\widehat\Pi\cap\mathbb S^1_{\tau=0}$
at which the intersection index of~$\widehat\Pi$ and~$\mathbb S^1_{\tau=0}$ is~$+1$ (respectively, $-1$).

\begin{defi}
Let~$(\Pi,\epsilon)$ be an oriented rectangular diagram of a quasi-surface. By \emph{a positive deformation}
of~$(\Pi,\epsilon)$ we mean a couple~$f=(f^0,f^1)$ of continuous maps
$$f^0:\Phi(\Pi)\times[0;1]\rightarrow\mathbb S^1,\quad f^1:\Theta(\Pi)\times[0;1]\rightarrow\mathbb S^1$$
such that
\begin{enumerate}
\item
$f^0(\varphi,0)=\varphi$ and~$f^1(\theta,0)=\theta$ for any~$\varphi\in\Phi(\Pi)$ and~$\theta\in\Theta(\Pi)$;
\item
for any~$t\in[0;1]$, the maps~$f^0(\cdot,t)$ and~$f^1(\cdot,t)$ are embeddings (so~$f^0$ and~$f^1$ are isotopies);
\item
for any~$\theta\in\Theta_+(\Pi,\epsilon)$ and~$\varphi\in\Phi_+(\Pi,\epsilon)$ the functions~$f^0(\varphi,\cdot)$
and~$f^1(\theta,\cdot)$ are strictly increasing;
\item
for any~$\theta\in\Theta_-(\Pi,\epsilon)$ and~$\varphi\in\Phi_-(\Pi,\epsilon)$ the functions~$f^0(\varphi,\cdot)$
and~$f^1(\theta,\cdot)$ are strictly decreasing.
\end{enumerate}
When these conditions hold we denote by~$f_t(r)$, where~$r\in\Pi$ and~$t\in[0;1]$,
the following rectangle:
$$f_t(r)=[f^1(\theta_-(r),t);f^1(\theta_+(r),t)]\times[f^0(\varphi_-(r),t);f^0(\varphi_+(r),t)],$$
and by~$f_t(\Pi)$
the following rectangular diagram of a quasi-surface:
$$f_t(\Pi)=\{f_t(r):r\in\Pi\}.$$
We also denote by~$f_t(\Pi,\epsilon)$ the oriented
rectangular diagram of a quasi-surface~$(f_t(\Pi),\epsilon')$, where
$$\begin{aligned}
\epsilon'(r')&=\epsilon(r)\text{ if }r\in\Pi,\text{ and }r'=f_t(r).
\end{aligned}$$
We will say that rectangular diagrams~$f_t(\Pi)$ and~$f_t(\Pi,\epsilon)$, $t\in(0;1]$, are obtained from~$\Pi$ and~$(\Pi,\epsilon)$,
respectively, by a \emph{positive deformation}.
If the functions~$f^0(\cdot,1)$ and~$f^1(\cdot,1)$ are close to~$f^0(\cdot,0)$ and~$f^1(\cdot,0)$, respectively,
we say that the positive deformation is \emph{small}.
\end{defi}

Proposition~\ref{octagon-prop} can be generalized as follows.

\begin{prop}\label{thickened_surface-prop}
Let~$f=(f^0,f^1)$ be a positive deformation of an oriented rectangular diagram of a quasi-surface~$(\Pi,\epsilon)$,
and let~$(\Pi_t,\epsilon_t)=f_t(\Pi,\epsilon)$. Then there is an orientation preserving immersion
$$I_f:\widehat\Pi\times[0;1]\rightarrow\mathbb S^3$$
taking~$\widehat\Pi\times\{t\}$ to~$\widehat\Pi_t$.
If, additionally, both maps~$f^0$ and~$f^1$ are embeddings,
then~$I_f$ is also an embedding.
\end{prop}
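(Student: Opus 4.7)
The approach is to build $I_f$ rectangle by rectangle via Proposition~\ref{octagon-prop}, and then glue the pieces along the arcs over shared vertices. Fix $r\in\Pi$; if $\epsilon(r)=+1$, set $\boldsymbol\theta_1(t)=f^1(\theta_-(r),t)$, $\boldsymbol\theta_2(t)=f^1(\theta_+(r),t)$, $\boldsymbol\varphi_1(t)=f^0(\varphi_-(r),t)$, $\boldsymbol\varphi_2(t)=f^0(\varphi_+(r),t)$. The memberships $\theta_-(r)\in\Theta_+(\Pi,\epsilon)$, $\theta_+(r)\in\Theta_-(\Pi,\epsilon)$, $\varphi_+(r)\in\Phi_+(\Pi,\epsilon)$, $\varphi_-(r)\in\Phi_-(\Pi,\epsilon)$ together with the monotonicity axioms of a positive deformation translate directly into conditions (i)--(ii) of Proposition~\ref{octagon-prop}, while condition (iii) follows from $f^0(\cdot,t)$ and $f^1(\cdot,t)$ being embeddings. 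For $\epsilon(r)=-1$ all four signs reverse, so I would first time-reverse the deformation, apply Proposition~\ref{octagon-prop}, and then rescale $t\mapsto 1-t$ back. In either case the proposition produces a continuous embedding of the cube $[0;1]^3$ into $\mathbb S^3$ whose slice at time $t$ parametrizes $\widehat{f_t(r)}$. Precomposing with $u_r^{-1}\times\mathrm{id}$ yields a continuous map $J_r\colon\widehat r\times[0;1]\to\mathbb S^3$ that is the identity at $t=0$ and the canonical homeomorphism $\widehat r\to\widehat{f_t(r)}$ at each $t$.

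The second step is to check that the family $\{J_r\}_{r\in\Pi}$ assembles into one continuous map $I_f$. By properties 3--5 of the construction $r\mapsto\widehat r$, two tiles $\widehat{r_1}$ and $\widehat{r_2}$ meet only along arcs $\widehat v$ over shared vertices, or at isolated points of $\mathbb S^1_{\tau=0}$, $\mathbb S^1_{\tau=1}$ lying on shared meridians or longitudes. The boundary formulas~\eqref{boundary-param-eq} show that the restriction of $u_r$ to each side of $[0;1]^2$ is a standard parametrization of the arc over the corresponding corner, depending only on that corner's coordinates. Since a shared vertex $v=(v_\theta,v_\varphi)$ is carried by both deformations to the same vertex $(f^1(v_\theta,t),f^0(v_\varphi,t))$ of $f_t(r_1)$ and $f_t(r_2)$, the maps $J_{r_1}$ and $J_{r_2}$ must agree pointwise on $\widehat v\times\{t\}$ for every $t$; the analogous calculation handles the meeting points on the cone circles. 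Local finiteness of $\Pi$ around each tile (established earlier in showing that $\widehat\Pi$ is a quasi-surface) then yields continuity of the glued map $I_f$ on $\widehat\Pi\times[0;1]$.

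It remains to verify the immersion, orientation, and conditional embedding claims. On the interior of each tile $I_f$ is a restriction of a cube embedding. Near an interior point of a shared arc $\widehat v$ the two adjacent tiles join with a common tangent plane from opposite sides (property 3), and the monotonicity rules of a positive deformation preserve this structure as $t$ varies, so $I_f$ remains a local immersion along every arc $\widehat{f_t(v)}$; the meeting points on the cone circles are handled identically via property 4. Orientation preservation reduces to a sign check: for a positive rectangle the directions in which the four corners move combine with $\partial/\partial t$ to reproduce the positive orientation of $\mathbb S^3$, and the time-reversal performed for negative rectangles restores orientation there as well. If in addition $f^0$ and $f^1$ are embeddings on the products $\Phi(\Pi)\times[0;1]$ and $\Theta(\Pi)\times[0;1]$, the trajectories of distinct vertex coordinates are pairwise disjoint; this forces the interiors of the varying rectangles $f_t(r)$, as $(r,t)$ runs over $\Pi\times[0;1]$, to avoid each other beyond the sanctioned compatibility relations, so the corresponding tiles overlap only along the gluing arcs and the locally injective map $I_f$ becomes globally injective. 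The \emph{main obstacle} throughout is this gluing across the moving arcs $\widehat{f_t(v)}$: one must combine the explicit boundary formulas~\eqref{boundary-param-eq} with the compatibility relations between neighboring rectangles carefully enough to confirm that $I_f$ stays a local immersion (and, in the embedding case, globally injective) as $t$ varies.
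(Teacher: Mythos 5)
Your proposal is correct and follows essentially the same route as the paper: the map is defined tile-by-tile as $u_{f_t(r)}\circ u_r^{-1}$, its well-definedness on shared arcs is checked via the boundary parametrization~\eqref{boundary-param-eq}, Proposition~\ref{octagon-prop} (with the sign/monotonicity bookkeeping you spell out) gives the immersion property, and disjointness of the slices $\widehat\Pi_t$ when $f^0,f^1$ are embeddings gives injectivity. The only cosmetic difference is that the paper treats the resulting map as a topological immersion that is then $C^0$-approximated by a smooth one near the arcs $\partial\widehat r\times[0;1]$, a smoothing step you assert rather than perform, but this does not affect the argument.
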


\begin{proof}
First, we define a map~$\widetilde I_f:\widehat\Pi\times[0;1]\rightarrow\mathbb S^3$
by
$$\widetilde I_f(p,t)=(u_{f_t(r)}\circ u_r^{-1})(p)\text{ if }r\in\Pi\text{ and }p\in\widehat r.$$
It follows from the parametrization~\eqref{boundary-param-eq} that this map
is well defined at the intersections~$\widehat r\cap\widehat r'$ whenever~$r$ and~$r'$
are rectangles from~$\Pi$ sharing a corner. One can see that~$\widetilde I_f$
takes~$\widehat\Pi\times\{t\}$ to~$\widehat\Pi_t$.

Proposition~\ref{octagon-prop} implies that~$\widetilde I_f$ is a topological immersion which
is smooth everywhere except at~$\bigcup_{r\in\Pi}\partial\widehat r\times[0;1]$.
Clearly, $\widetilde I_f$ can be $C^0$-approximated by a smooth immersion~$I_f$
also taking~$\widehat\Pi\times\{t\}$ to~$\widehat\Pi_t$ for all~$t\in[0;1]$.

If~$f^0$ and~$f^1$ are embeddings, then the surfaces~$\widehat\Pi_t$, $t\in[0;1]$ are
pairwise disjoint, so the maps~$\widetilde I_f$ and~$I_f$ are embeddings.
\end{proof}

\section{Thin tubes around a link}\label{tube-sec}

\begin{prop}\label{tube-prop}
Let $R$ be a rectangular diagram of a link, and let~$d$ be the smallest distance between
two neighboring meridians or two neighboring longitudes containing vertices of~$R$. Denote by~$\mathfrak v$
\emph(respectively, by $\mathfrak h$\emph)
the bijection~$R\rightarrow R$ exchanging vertices in each vertical \emph(respectively, horizontal\emph) edge.
Then, for any~$t\in(0;d/2)$, the following collection of rectangles
$$\Omega_t(R)=\bigcup_{v=(\theta_v,\varphi_v)\in R}\bigl\{[\theta_v+t;\theta_{\mathfrak h(v)}-t]\times[\varphi_v-t;\varphi_v+t],
[\theta_v-t;\theta_v+t]\times[\varphi_v+t;\varphi_{\mathfrak v(v)}-t]\bigr\}$$
is a rectangular diagram of a surface \emph(consult Figure~\ref{tube-fig}\emph). The associated surface~$\widehat\Omega_t(R)$
is the boundary of a tubular neighborhood of the link~$\widehat R$,
and this tubular neighborhood is
\begin{equation}\label{N_t(R)-eq}
N_t(R)=\widehat R\cup\bigcup_{0<s<t}\widehat\Omega_s(R).
\end{equation}
\end{prop}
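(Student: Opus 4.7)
The proof splits into a combinatorial verification that $\Omega_t(R)$ is a rectangular diagram of a surface, and a topological identification of $\widehat\Omega_t(R)$ by passing to a limit of the deformation machinery of Proposition~\ref{thickened_surface-prop}.

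For the combinatorial step, I would fix a vertex $v=(\theta_v,\varphi_v)\in R$ and observe that the only four rectangles of $\Omega_t(R)$ having corners at distance $t$ from $v$ are $H_v$, $V_v$, $H_{\mathfrak h(v)}$ and $V_{\mathfrak v(v)}$; listing their corners shows that each of the four points $(\theta_v\pm t,\varphi_v\pm t)$ is a corner of exactly two of these four rectangles. Since $t<d/2$, rectangles associated with distinct vertices of $R$ keep their corners apart in at least one coordinate, so a routine case split over $H$--$H$, $V$--$V$ and $H$--$V$ pairs shows that two rectangles of $\Omega_t(R)$ are either disjoint or meet in a single common corner. This yields pairwise compatibility, and the corner count forces every vertex of $\Omega_t(R)$ to belong to exactly two rectangles, so $\partial\Omega_t(R)=\varnothing$ and $\Omega_t(R)$ is a rectangular diagram of a closed surface.

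For the topological identification I would orient $\Omega_t(R)$ by $\epsilon(H_v)=+1$ and $\epsilon(V_v)=-1$. The corner sharings noted above, together with the observation that the only rectangle pairs sharing an entire meridian or longitude are $\{V_v,V_{\mathfrak v(v)}\}$ and $\{H_v,H_{\mathfrak h(v)}\}$, show that this orientation is consistent with the axioms, and give $\Theta_+=\{\theta_v+t\}_{v\in R}$, $\Theta_-=\{\theta_v-t\}_{v\in R}$ and analogous expressions for $\Phi_\pm$. For any $0<s_1<s_2<d/2$ the linear prescriptions
$$f^1(\theta_v\pm s_1,u)=\theta_v\pm(s_1+u(s_2-s_1)),\qquad f^0(\varphi_v\pm s_1,u)=\varphi_v\pm(s_1+u(s_2-s_1))$$
then define a positive deformation whose time-$u$ image is $\Omega_{s_1+u(s_2-s_1)}(R)$, and both $f^0$ and $f^1$ are embeddings since $s_2<d/2$. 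Proposition~\ref{thickened_surface-prop} then supplies an orientation-preserving embedding $\widehat\Omega_{s_1}(R)\times[0;1]\hookrightarrow\mathbb S^3$ sending each slice onto the corresponding $\widehat\Omega_s(R)$, so the surfaces $\widehat\Omega_s(R)$, $s\in(0,d/2)$, are pairwise disjoint and foliate a thickened region between any two of them. An Euler characteristic count—$2|R|$ rectangles, $4|R|$ distinct corners, and $|R|$ occupied meridians together with $|R|$ occupied longitudes—gives $\chi(\widehat\Omega_t(R))=0$, and orientability in $\mathbb S^3$ forces each connected component to be a torus.

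The most delicate step is the limit $s_1\to 0^+$: one must show that these foliations assemble into an open tubular neighbourhood of $\widehat R$ with boundary $\widehat\Omega_t(R)$, and match the torus components of $\widehat\Omega_t(R)$ with the components of $\widehat R$. Here I would revisit the explicit formulas for $h_r$ and $u_r$ from Section~\ref{deform-sec} to verify that, as $s\to 0$, the tile $\widehat{H_v(s)}$ Hausdorff-converges to $\widehat v\cup\widehat{\mathfrak h(v)}$ together with the arc joining them on $\mathbb S^1_{\tau=0}$, and symmetrically for $\widehat{V_v(s)}$ with $\widehat{\mathfrak v(v)}$ and $\mathbb S^1_{\tau=1}$; consequently $\widehat\Omega_s(R)$ Hausdorff-converges to $\widehat R$. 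Combined with the $\mathbb S^1_{\tau=0}$ and $\mathbb S^1_{\tau=1}$ gluings from property~(4) of the construction $r\mapsto\widehat r$, this identifies each torus of $\widehat\Omega_t(R)$ with the boundary of a tube around exactly one component of $\widehat R$, and the deformation embeddings obtained as $s_1\to 0$ assemble into the required parametrization $N_t(R)=\widehat R\cup\bigcup_{0<s<t}\widehat\Omega_s(R)$.
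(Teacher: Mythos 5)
Your proposal is correct and follows essentially the same route as the paper: the product structure between two slices $\widehat\Omega_{s_1}(R)$ and $\widehat\Omega_{s_2}(R)$ comes from the positive-deformation machinery of Proposition~\ref{thickened_surface-prop}, and the limit $s\to0$ is controlled by explicit estimates on the tiles near $\widehat R$, which is precisely the content of the paper's Lemma~\ref{lemm:neighborhood} (derived from the series~\eqref{h_r-series-eq}). The only slip is cosmetic: for a horizontal edge the arcs $\widehat v$ and $\widehat{\mathfrak h(v)}$ already meet at a point of $\mathbb S^1_{\tau=0}$ under the join identifications, so the limit of $\widehat{H_v(s)}$ is just $\widehat v\cup\widehat{\mathfrak h(v)}$ with no extra joining arc.
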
 
\begin{figure}[ht]
\centering{
\includegraphics[scale=0.75]{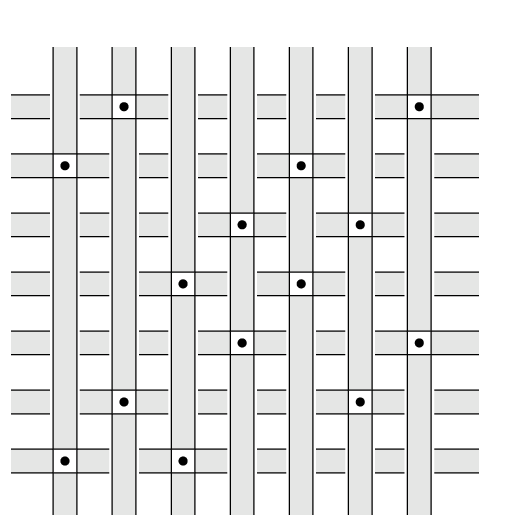}
}
\caption{Rectangular diagram of a link~$R$ and the rectangular diagram~$\Omega_t(R)$
of a tube around~$\widehat R$}
\label{tube-fig}
\end{figure}

To prove the proposition we need the following
\begin{lemm}\label{lemm:neighborhood}
Let~$v_1=(\theta_1,\varphi_1)$ and~$v_2=(\theta_2,\varphi_2)$ be two distinct points of~$\mathbb T^2$
lying on the same longitude \emph(respectively, on the same meridian\emph), and let~$U$
be an open neighborhood of the arc~$\widehat v_1\cup\widehat v_2$.
Then, for small enough~$\varepsilon>0$,
the disk~$\widehat r_\varepsilon$ associated with the rectangle $r_\varepsilon=[\theta_1+\varepsilon;\theta_2-\varepsilon]\times[\varphi_1-\varepsilon;\varphi_1+\varepsilon]$
\emph(respectively, $r_\varepsilon= [\theta_1 - \varepsilon; \theta_1 + \varepsilon] \times[\varphi_1 + \varepsilon; \varphi_2 - \varepsilon]$\emph)
is contained in~$U$.
\end{lemm}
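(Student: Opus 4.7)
The plan is to handle the case where $v_1$ and $v_2$ lie on the same longitude ($\varphi_1=\varphi_2$); the other case is symmetric. In this situation $\widehat v_1\cup\widehat v_2$ consists of two arcs sharing the single point $[\ast,\varphi_1,0]\in\mathbb S^1_{\tau=0}$, by the identifications in~\eqref{join-eq}. I would prove the slightly stronger statement that $\widehat r_\varepsilon$ tends to $\widehat v_1\cup\widehat v_2$ in the Hausdorff metric as $\varepsilon\to 0^+$; since that set is compact and $U$ is open, the lemma follows.

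For the Hausdorff claim I would argue by contradiction: suppose $\varepsilon_n\downarrow 0$ and $p_n\in\widehat r_{\varepsilon_n}$ satisfy $\mathrm{dist}(p_n,\widehat v_1\cup\widehat v_2)\ge\delta_0>0$. Writing $p_n=[\theta_n,\varphi_n,\tau_n]$ with $(\theta_n,\varphi_n)\in r_{\varepsilon_n}$ and $\tau_n=\widetilde h_{r_{\varepsilon_n}}(\theta_n,\varphi_n)$, the inclusion $\varphi_n\in[\varphi_1-\varepsilon_n,\varphi_1+\varepsilon_n]$ forces $\varphi_n\to\varphi_1$, and I may pass to a subsequence with $\theta_n\to\theta_\infty\in[\theta_1,\theta_2]$ and $\tau_n\to\tau_\infty\in[0;1]$. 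If $\tau_\infty=0$, the identification at $\tau=0$ collapses the $\theta$-coordinate and $p_n\to[\ast,\varphi_1,0]\in\widehat v_1\cap\widehat v_2$, contradicting the distance bound. Otherwise $\tau_\infty>0$, and the argument reduces to showing $\theta_\infty\in\{\theta_1,\theta_2\}$, since that gives $p_n\to[\theta_\infty,\varphi_1,\tau_\infty]\in\widehat v_1\cup\widehat v_2$.

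The hard part is therefore a uniform harmonic-function decay: for every $\delta>0$, $\widetilde h_{r_\varepsilon}(\theta,\varphi)\to 0$ as $\varepsilon\to 0^+$, uniformly for $(\theta,\varphi)\in r_\varepsilon$ with $\theta\in[\theta_1+\delta,\theta_2-\delta]$. Granting this, the assumption $\tau_\infty>0$ forces $\theta_n\notin[\theta_1+\delta,\theta_2-\delta]$ for large $n$ for any fixed $\delta$, and since this holds for every small $\delta>0$, the limit must satisfy $\theta_\infty\in\{\theta_1,\theta_2\}$. I would read the decay directly off the series~\eqref{h_r-series-eq}: with $c_n=(2n+1)\pi/(2\varepsilon)$ and $\varepsilon<\delta/2$, both $\theta-(\theta_1+\varepsilon)$ and $(\theta_2-\varepsilon)-\theta$ exceed $\delta/2$, so each exponential in the numerator divided by the denominator exponential is bounded by $e^{-c_n\delta/2}=e^{-(2n+1)\pi\delta/(4\varepsilon)}$; combined with $|\sin|\le 1$ and summation this gives $h_{r_\varepsilon}(\theta,\varphi)=O(e^{-\pi\delta/(4\varepsilon)})$, and continuity of $\zeta$ at $0$ transfers the decay to $\widetilde h_{r_\varepsilon}$. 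The harmonic estimate is the only nontrivial step; I do not expect to need anything beyond the Fourier series, although a maximum-principle barrier against $\cosh$-type comparison functions would work as a less computational alternative.
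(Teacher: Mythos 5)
Your proposal is correct and rests on the same key ingredient as the paper's proof: the exponential smallness of $h_{r_\varepsilon}$ away from the short sides of $r_\varepsilon$, read off the series~\eqref{h_r-series-eq}, which forces the tile to hug the two arcs except near the collapsed circle $\mathbb S^1_{\tau=0}$. The paper packages this as an explicit cover of $\widehat r_\varepsilon$ by two small sets (with the cutoff placed at distance $\sqrt\varepsilon$ from the sides) rather than your soft Hausdorff-convergence/subsequence argument, but the two are interchangeable.
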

\begin{proof}
Due to symmetry it suffices to consider the case when~$v_1$ and~$v_2$ lie on the same longitude,
which means $\varphi_1 = \varphi_2$.

Now if~$\varepsilon>0$ is small, then the subset
$$V=\bigl\{[\theta,\varphi,\tau]\in\mathbb S^3:
(\theta,\varphi,\tau)\in[\theta_1 + \varepsilon; \theta_2 - \varepsilon]\times[\varphi_1 - \varepsilon; \varphi_1 + \varepsilon]\times\bigl[0; \zeta(8\sqrt\varepsilon/\pi^2)\bigr]\bigr\}$$
is contained in a small neighborhood of~$[\theta_1,\varphi_1,0]{}=[\theta_2,\varphi_1,0]=\widehat v_1\cap\widehat v_2$
(recall that the function~$\zeta$
is defined by~\eqref{zeta-func-eq}), and the subset
$$W=\bigl[\theta_1+\varepsilon; \theta_1 + \varepsilon + \sqrt\varepsilon\bigr]*[\varphi_1 - \varepsilon; \varphi_1 + \varepsilon] \cup
\bigl[\theta_2-\varepsilon - \sqrt\varepsilon; \theta_2 - \varepsilon\bigr]*[\varphi_1 - \varepsilon; \varphi_1 + \varepsilon]\subset\mathbb S^1*\mathbb S^1=
\mathbb S^3$$
is contained in a small neighborhood of~$\widehat v_1\cup\widehat v_2$.
Thus, for small enough~$\varepsilon>0$ the subset~$V\cup W$ is contained in~$U$.

Now we claim that~$\widehat r_\varepsilon\subset V\cup W$. Indeed, from~\eqref{h_r-series-eq} we have for~$(\theta,\varphi)\in r_\varepsilon\setminus\partial\{r_\varepsilon\}$:
\begin{equation}\label{h_r_eps-eq}
h_{r_\varepsilon}(\theta,\varphi)=\frac4\pi\sum_{n=0}^\infty
\frac{\sin(c_n(\varphi-\varphi_1+\varepsilon))\bigl(e^{c_n(\theta-\theta_1-\varepsilon)}+e^{c_n(\theta_2-\theta-\varepsilon)}\bigr)}
{(2n+1)\bigl(1+e^{c_n(\theta_2-\theta_1-2\varepsilon)}\bigr)},
\end{equation}
where
$$c_n=\frac{2n+1}{2\varepsilon}\pi.$$
If~$\theta\in[\theta_1+\varepsilon+\sqrt\varepsilon;\theta_2-\varepsilon-\sqrt\varepsilon]$, then the sum~\eqref{h_r_eps-eq}
is bounded from above by
$$\frac8\pi\sum_{n=0}^\infty e^{-c_n\sqrt\varepsilon}=\frac4{\pi\sinh(\pi/(2\sqrt\varepsilon))}<\frac{8\sqrt\varepsilon}{\pi^2},$$
hence~$\bigl[\theta,\varphi,\widetilde h_{r_\varepsilon}(\theta,\varphi)\bigr]\in W$. Otherwise,
$\bigl[\theta,\varphi,\widetilde h_{r_\varepsilon}(\theta,\varphi)\bigr]\in V$.
\end{proof}

\begin{proof}[Proof of Proposition~\ref{tube-prop}]
It follows from Lemma~\ref{lemm:neighborhood} that, for any open neighborhood~$U$ of~$\widehat R$,
the surface~$\widehat\Omega_t(R)$ is contained in~$U$ provided that~$t$ is small enough.

For~$t<d/2$, define an orientation~$\epsilon_t$ of the diagram~$\Omega_t(R)$ by putting~$\epsilon_t(r)=-1$
if~$r$ has the form~$[\theta_v-t;\theta_v+t]\times[\varphi_v+t;\varphi_{\mathfrak v(v)}-t]$, and~$\epsilon_t(r)=1$
if~$r$ has the form~$[\theta_v+t;\theta_{\mathfrak h(v)}-t]\times[\varphi_v-t;\varphi_v+t]$.
Then whenever~$0<s<t<d/2$, the oriented rectangular diagram~$(\Omega_t,\epsilon_t)$
is obtained from~$(\Omega_s,\epsilon_s)$ by a positive deformation, and the respective
surfaces~$\widehat\Omega_t$ and~$\widehat\Omega_s$ are disjoint.
Moreover, it follows from Proposition~\ref{thickened_surface-prop} that~$\bigcup_{s\leqslant z\leqslant t}\widehat\Omega_z$
is diffeomorphic to~$\Omega_s\times[0;1]$. The claim follows.
\end{proof}

\section{Packs of rectangles}\label{pack-sec}
\begin{defi}\label{pack-rect-def}
By a \emph{positive pack of rectangles} we call an ordered family~$P$
of rectangles in~$\mathbb T^2$ having the form~$\{r(t)\}_{t\in[0;1]}$,
with~$r(t)$ as in Proposition~\ref{octagon-prop}, such that
the order in~$P$ agrees with the one induced by the parametrization~$t\mapsto r(t)$.
The four arcs
$$\{(\boldsymbol\theta_1(t),\boldsymbol\varphi_1(t))\}_{t\in[0;1]},\quad
\{(\boldsymbol\theta_2(t),\boldsymbol\varphi_1(t))\}_{t\in[0;1]},\quad
\{(\boldsymbol\theta_1(t),\boldsymbol\varphi_2(t))\}_{t\in[0;1]},\quad
\{(\boldsymbol\theta_2(t),\boldsymbol\varphi_2(t))\}_{t\in[0;1]}$$
oriented according to the parametrization, where~$\boldsymbol\theta_{1,2}$ and~$\boldsymbol\varphi_{1,2}$
are as in Proposition~\ref{octagon-prop}, are called the \emph{corner arcs} of~$P$
and denoted by~$\cabl P$, $\cabr P$,
$\catl P$, and $\catr P$,
respectively.

By reversing the order in a positive pack of rectangles one obtains a \emph{negative
pack of rectangles}. The orientation of corner arcs is reversed accordingly.

The least and the largest elements in~$P$ will be referred to as the \emph{first}
and the \emph{last} rectangles of~$P$ and denoted~$r_{\mathrm{min}}(P)$
and~$r_{\mathrm{max}}(P)$, respectively.
\end{defi}

Observe that a pack of rectangles does not carry any fixed parametrization. 
As we will now see, to represent
a pack of rectangles~$P$, it suffices to specify the corner arcs of~$P$,
indicate their orientations, and say which one is which.

\begin{prop}\label{pack-defined-by-arcs-prop}
Let~$P$ and~$P'$ be packs of rectangles such that three of the four corner
arcs of~$P$ coincide with the respective corner arcs of~$P'$. Then~$P=P'$.
\end{prop}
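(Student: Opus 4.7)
The plan is to show that three coinciding oriented corner arcs determine each rectangle of the pack uniquely, forcing the packs to coincide. I would treat one configuration in detail and observe that the other three are analogous: assume $\cabl P=\cabl P'$, $\cabr P=\cabr P'$, $\catl P=\catl P'$. Since $\boldsymbol\theta_1$ is strictly monotone by condition~(i) of Proposition~\ref{octagon-prop}, the map $t\mapsto(\boldsymbol\theta_1(t),\boldsymbol\varphi_1(t))$ is an injection onto the arc~$\cabl P$. Hence each $r\in P$ is uniquely determined by its corner~$\cabl r$, and the bijection $P\leftrightarrow\cabl P$ is order-preserving; the same holds for $P'$. Fix $a=(\alpha,\beta)\in\cabl P=\cabl P'$ and let $r\in P$, $r'\in P'$ be the rectangles indexed by~$a$; by construction $\theta_-(r)=\alpha=\theta_-(r')$ and $\varphi_-(r)=\beta=\varphi_-(r')$.

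The substantive step is matching the remaining two corners. The corner $\cabr r$ lies on $\cabr P$ and has second coordinate~$\beta$; because the second coordinate along $\cabr P$ is given by the strictly monotone function $\boldsymbol\varphi_1$, there is exactly one point of $\cabr P$ with second coordinate~$\beta$. The same argument applied to $\cabr r'\in\cabr P'=\cabr P$ yields $\cabr r=\cabr r'$, so in particular $\theta_+(r)=\theta_+(r')$. Analogously, $\catl r$ is the unique point of $\catl P=\catl P'$ with first coordinate~$\alpha$ (by strict monotonicity of $\boldsymbol\theta_1$), giving $\catl r=\catl r'$ and $\varphi_+(r)=\varphi_+(r')$. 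All four corners of $r$ and $r'$ therefore agree, so $r=r'$; since this holds for every $a$ and the bijection $a\mapsto r$ respects orderings, $P=P'$ as ordered families of rectangles.

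The only genuine subtlety is the monotonicity-based uniqueness used in the coordinate matching on each arc: without the strict monotonicity of the parametrizing functions in Proposition~\ref{octagon-prop}, a coordinate value could repeat along a corner arc and the recovery of the missing corner would be ambiguous. Everything else is bookkeeping. The three remaining configurations of missing corner arc follow by the same argument with the roles of the coordinates and the indexing corner cyclically permuted.
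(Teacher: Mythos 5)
Your proof is correct and follows essentially the same route as the paper's: both arguments reduce to the observation that, because $\boldsymbol\varphi_1$ and $\boldsymbol\theta_1$ are strictly monotone, the shared coordinate of $\cabr r$ (respectively $\catl r$) with $\cabl r$ picks out a unique point on the corner arc $\cabr P$ (respectively $\catl P$), so the remaining corners are forced. The paper phrases this by reparametrizing $P'$ so that $\boldsymbol\theta_1'=\boldsymbol\theta_1$ and $\boldsymbol\varphi_1'=\boldsymbol\varphi_1$ and then concluding $\boldsymbol\theta_2=\boldsymbol\theta_2'$, $\boldsymbol\varphi_2=\boldsymbol\varphi_2'$, while you argue pointwise rectangle by rectangle; the difference is purely presentational.
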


\begin{proof}
Due to symmetry, we may assume that~$\cabl P=\cabl{P'}$, $\cabr P=\cabr{P'}$, and~$\catl P=\catl{P'}$.
Let
$$P=\{[\boldsymbol\theta_1(t);\boldsymbol\theta_2(t)]\times[\boldsymbol\varphi_1(t);\boldsymbol\varphi_2(t)]\}_{t\in[0;1]},\quad
P'=\{[\boldsymbol\theta_1'(t);\boldsymbol\theta_2'(t)]\times[\boldsymbol\varphi_1'(t);\boldsymbol\varphi_2'(t)]\}_{t\in[0;1]}.$$
Since~$\cabl P=\cabl{P'}$, we can reparametrize~$P'$ so that the obtained parametrization of~$\cabl P'$ will coincide
with that of~$\cabl P$. This means that we may assume~$\boldsymbol\theta_1=\boldsymbol\theta_1'$ and~$\boldsymbol\varphi_1=\boldsymbol\varphi_1'$, without loss of generality.

Now~$\cabr P=\cabr {P'}$ reads
$$\{(\boldsymbol\theta_2(t),\boldsymbol\varphi_1(t))\}_{t\in[0;1]}=
\{(\boldsymbol\theta_2'(t),\boldsymbol\varphi_1(t))\}_{t\in[0;1]},$$
which implies~$\boldsymbol\theta_2=\boldsymbol\theta_2'$, since~$\boldsymbol\varphi_1$ is a monotone function.
Similarly, $\catl P=\catl{P'}$ implies~$\boldsymbol\varphi_2=\boldsymbol\varphi_2'$.
\end{proof}

Thus, to define a pack of rectangles, we don't need to specify all four corner arcs, since any of them
can be recovered from the other three.

Clearly, each corner arc of a pack of rectangles is a simple arc that has the form of a graph of a monotonic
function~$\varphi=\varphi(\theta)$.
We call such curves \emph{sloped arcs}. The next statement gives a necessary and sufficient condition for three
sloped arcs to be corner arcs of a pack of rectangles.

\begin{prop}
Let~$r_1$ and~$r_2$ be two compatible rectangles such that~$r_1\cap r_2$ is also a rectangle. Let~$r_0$
be a minimal rectangle containing~$r_1\cup r_2$. Then, for any three sloped oriented arcs~$\alpha_1$, $\alpha_2$, $\alpha_3$ starting
at three corners of~$r_1$ and ending at the respective corners of~$r_2$, such that~$\alpha_i\subset r_0$, $i=1,2,3$,
there is a unique pack of rectangles~$P$ for which these arcs are corner arcs, and~$r_1$, $r_2$
are the first and the last rectangles, respectively.
\end{prop}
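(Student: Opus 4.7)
The plan is to deduce uniqueness immediately from Proposition~\ref{pack-defined-by-arcs-prop}, and for existence to reconstruct the four coordinate functions $\boldsymbol\theta_1, \boldsymbol\theta_2, \boldsymbol\varphi_1, \boldsymbol\varphi_2$ of the putative pack directly from the three given arcs. Uniqueness is then the content of the cited proposition applied to $P$ and any candidate pack sharing the same three corner arcs. For existence, by the dihedral symmetry exchanging corners of a rectangle it suffices to treat one choice of three corners; I would take the given arcs to be $\alpha_1$ connecting $\cabl{r_1}$ to $\cabl{r_2}$, $\alpha_2$ connecting $\cabr{r_1}$ to $\cabr{r_2}$, and $\alpha_3$ connecting $\catl{r_1}$ to $\catl{r_2}$, since the other three choices of three corners are handled by the same argument after relabeling.

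Here is the construction. First I parametrize $\alpha_1$ monotonically by $t \in [0;1]$ with $\alpha_1(0) = \cabl{r_1}$ and $\alpha_1(1) = \cabl{r_2}$, and let $\boldsymbol\theta_1(t), \boldsymbol\varphi_1(t)$ be its coordinates; both are strictly monotonic because $\alpha_1$ is a simple sloped arc, with directions (increasing for $\boldsymbol\theta_1$, decreasing for $\boldsymbol\varphi_1$) dictated by the positions of the endpoints. Since $\alpha_2$ is a sloped arc whose $\varphi$-projection is exactly the image of $\boldsymbol\varphi_1$, for each $t$ the horizontal line $\varphi = \boldsymbol\varphi_1(t)$ meets $\alpha_2$ in a single point; I take $\boldsymbol\theta_2(t)$ to be its $\theta$-coordinate. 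An analogous procedure using $\alpha_3$ and vertical lines $\theta = \boldsymbol\theta_1(t)$ produces $\boldsymbol\varphi_2(t)$. Setting $r(t) = [\boldsymbol\theta_1(t); \boldsymbol\theta_2(t)] \times [\boldsymbol\varphi_1(t); \boldsymbol\varphi_2(t)]$ and $P = \{r(t)\}_{t \in [0;1]}$ then gives the desired pack, provided one checks that $\boldsymbol\theta_2$ is strictly decreasing and $\boldsymbol\varphi_2$ strictly increasing (immediate from the sloped-arc property of $\alpha_2$ and $\alpha_3$ combined with the strict monotonicity of $\boldsymbol\varphi_1$ and $\boldsymbol\theta_1$) and that each $r(t)$ is nondegenerate. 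The latter follows because, for example, $\boldsymbol\theta_2 - \boldsymbol\theta_1$ is strictly monotonic in $t$ as the difference of a decreasing and an increasing function, and it is positive at both $t=0$ and $t=1$ since $r_1$ and $r_2$ are rectangles, hence positive on all of $[0;1]$; the $\varphi$-inequality is checked symmetrically. Proposition~\ref{octagon-prop} then certifies that $P$ is a pack of rectangles with $r_1, r_2$ as its first and last rectangles, and by construction $\cabl P = \alpha_1$, $\cabr P = \alpha_2$, $\catl P = \alpha_3$.

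The main technical point I expect to need care with is that the coordinates $\theta, \varphi$ are cyclic, so phrases like ``strictly monotonic function'' and ``the line $\varphi = \boldsymbol\varphi_1(t)$ meets $\alpha_2$ in exactly one point'' must be interpreted after passing to a universal cover. The hypothesis $\alpha_i \subset r_0$ together with $r_1 \cup r_2 \subset r_0$ guarantees that the entire argument takes place inside a fixed simply-connected lift of $r_0$ in $\mathbb R^2$, where these statements become routine facts about monotone functions on closed intervals.
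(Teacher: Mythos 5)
Your proposal is correct and follows essentially the same route as the paper: reduce by symmetry to one choice of three corners, parametrize $\alpha_1$ to obtain $\boldsymbol\theta_1,\boldsymbol\varphi_1$, and then reparametrize $\alpha_2$ and $\alpha_3$ over the shared $\varphi$- and $\theta$-coordinates to extract $\boldsymbol\theta_2$ and $\boldsymbol\varphi_2$, with uniqueness supplied by Proposition~\ref{pack-defined-by-arcs-prop}. Your extra checks (nondegeneracy of $r(t)$ and the passage to the universal cover) are sound refinements of details the paper leaves implicit.
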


\begin{proof}
Due to symmetry, it suffices to consider the case when~$\alpha_1$, $\alpha_2$, and~$\alpha_3$
start at~$\cabl{r_1}$, $\cabr{r_1}$, and~$\catl{r_1}$, respectively. We may also assume~$\theta_-(r_1)<\theta_-(r_2)$
(otherwise exchange~$r_1$ and~$r_2$, and reverse the ordering in~$P$ afterwards).
The configuration is shown in Figure~\ref{recover-pack-fig}.
\begin{figure}[ht]
\includegraphics[scale=.5]{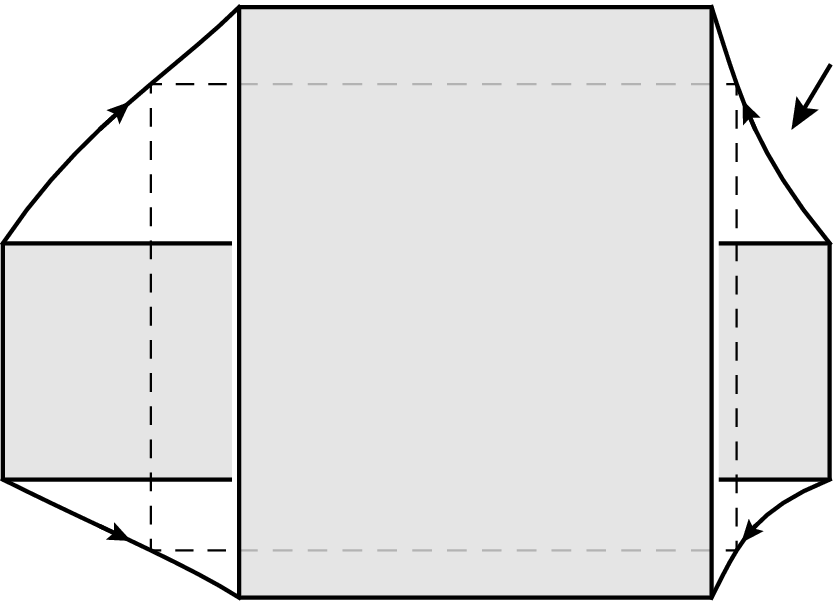}
\put(5, 126){\parbox{5cm}{$\catr P$ is recovered\\from~$\alpha_1,\alpha_2,\alpha_3$}}
\put(-18, 10){$\alpha_2$}
\put(-185, 10){$\alpha_1$}
\put(-185, 123){$\alpha_3$}
\put(-90, 73){$r_2$}
\put(-185, 57){$r_1$}
\caption{Recovering a pack of rectangles from three corner arcs}\label{recover-pack-fig}
\end{figure}

Since~$\alpha_1$ is a sloped arc connecting~$\cabl{r_1}$ with~$\cabl{r_2}$ and contained in~$r_0$,
it has the form of the graph of a strictly decreasing map from~$[\theta_-(r_1);\theta_-(r_2)]$
to~$[\varphi_-(r_2);\varphi_-(r_1)]$. Therefore, it admits a parametrization~$\alpha_1(t)=(\boldsymbol\theta_1(t),
\boldsymbol\varphi_1(t))$, $t\in[0;1]$, where~$\boldsymbol\theta_1$ and~$\boldsymbol\varphi_1$ are a strictly increasing
and a strictly decreasing functions, respectively.

The arc $\alpha_2$ is the graph of a strictly increasing function taking~$[\theta_+(r_2);\theta_+(r_1)]$
to~$[\varphi_-(r_2);\varphi_-(r_1)]$. We already have the function~$\boldsymbol\varphi_1$
that parametrizes the interval~$[\varphi_-(r_2);\varphi_-(r_1)]$. A parametrization of~$\alpha_2$
can be chosen in the form~$\alpha_2(t)=(\boldsymbol\theta_2(t),\boldsymbol\varphi_1(t))$.

Similarly, a parametrization of~$\alpha_3$ can be chosen in the form~$\alpha_3(t)=(\boldsymbol\theta_1(t),\boldsymbol\varphi_2(t))$.
The family~$\{[\boldsymbol\theta_1(t);\boldsymbol\theta_2(t)]\times[\boldsymbol\varphi_1(t);\boldsymbol\varphi_2(t)]\}_{t\in[0;1]}$
will then be the required pack of rectangles.
\end{proof}

\begin{defi}\label{compat-pack-def}
Two packs of rectangles~$P$ and~$P'$, say, are called \emph{compatible}
if any rectangle in~$P$ is compatible with any rectangle in~$P'$,
and whenever a corner arc~$\alpha$ of~$P$ meets a corner arc~$\alpha'$,
the union~$\alpha\cup\alpha'$ is a sloped arc, and the orientations
of~$\alpha$ and~$\alpha'$ agree at the intersection.

Figure~\ref{compat-pack-fig} shows examples of pairs of compatible packs of rectangles.
Each pack of rectangles is represented in the figure by the first and last rectangles and the corner arcs.
\begin{figure}[ht]
\begin{tabular}{ccc}
\includegraphics[scale=0.35]{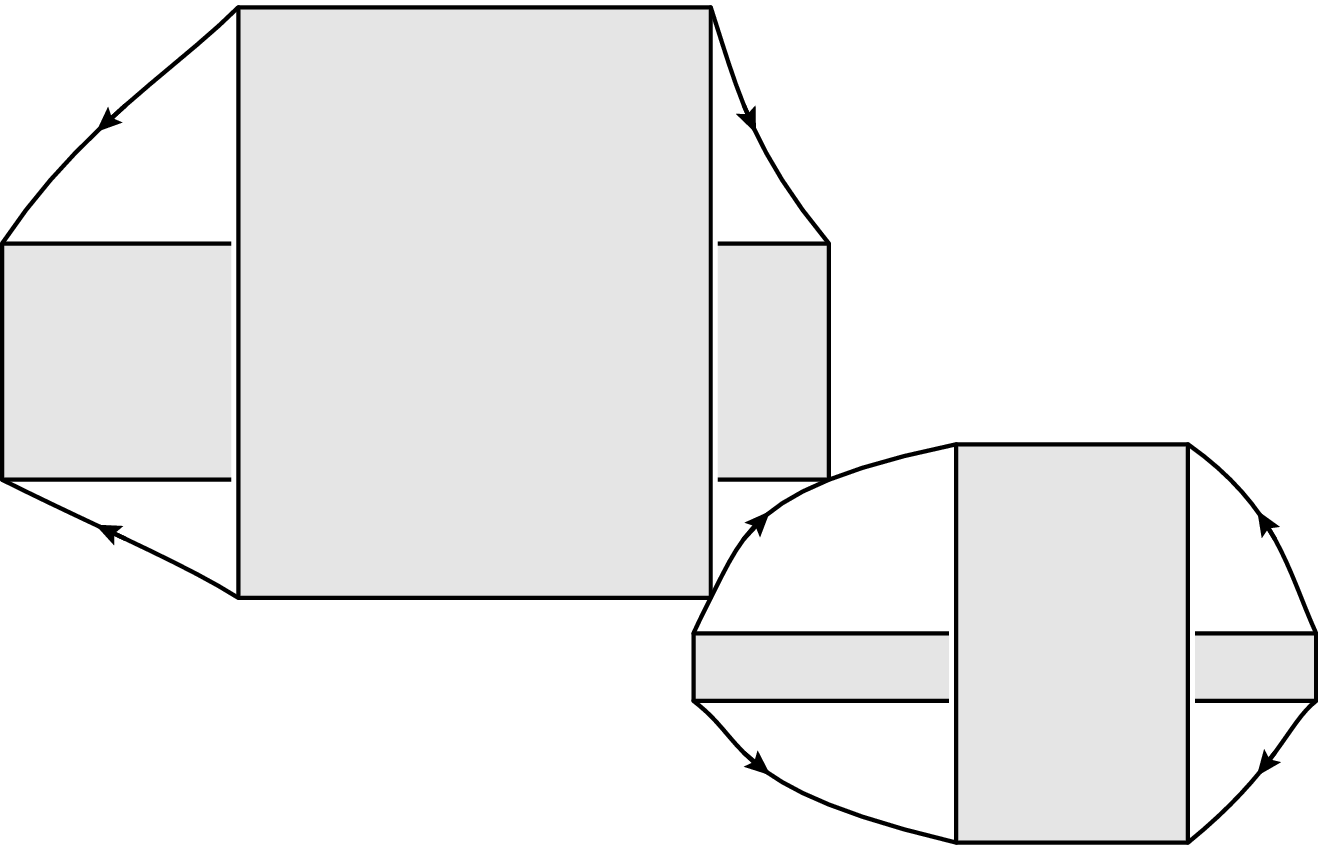}&\hbox to .5cm{}&
\includegraphics[scale=0.3]{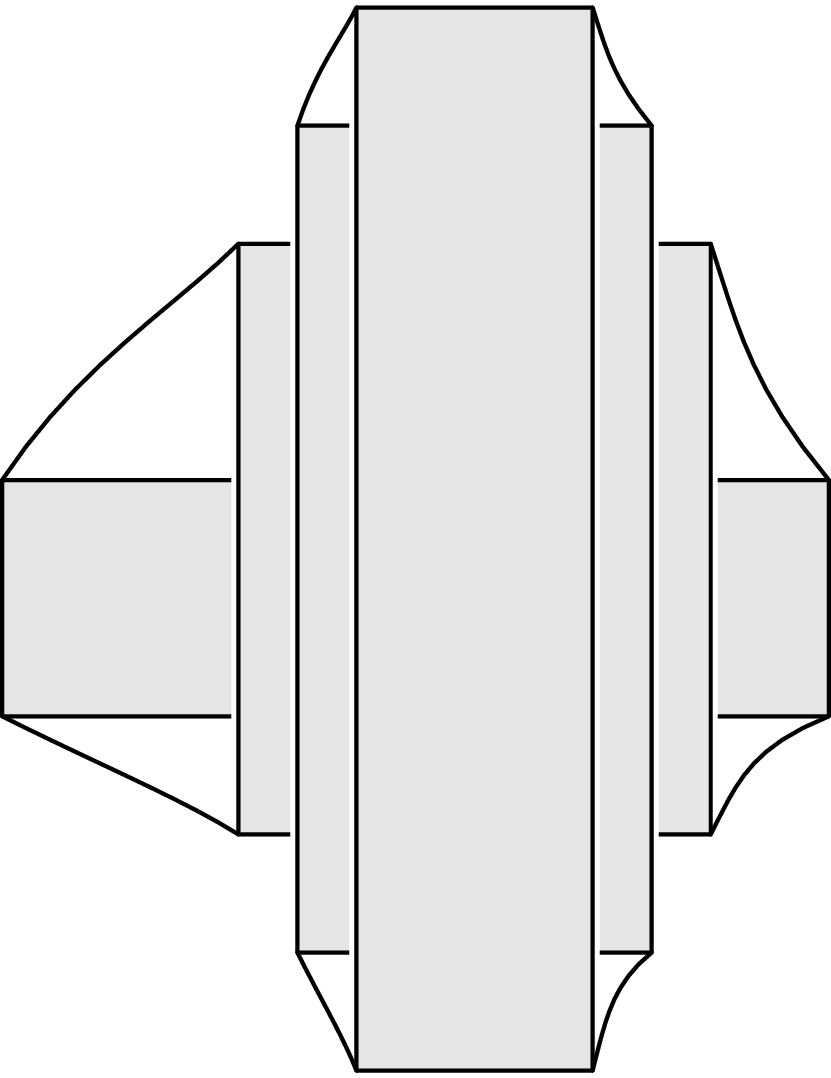}\\[1cm]
\includegraphics[scale=0.3]{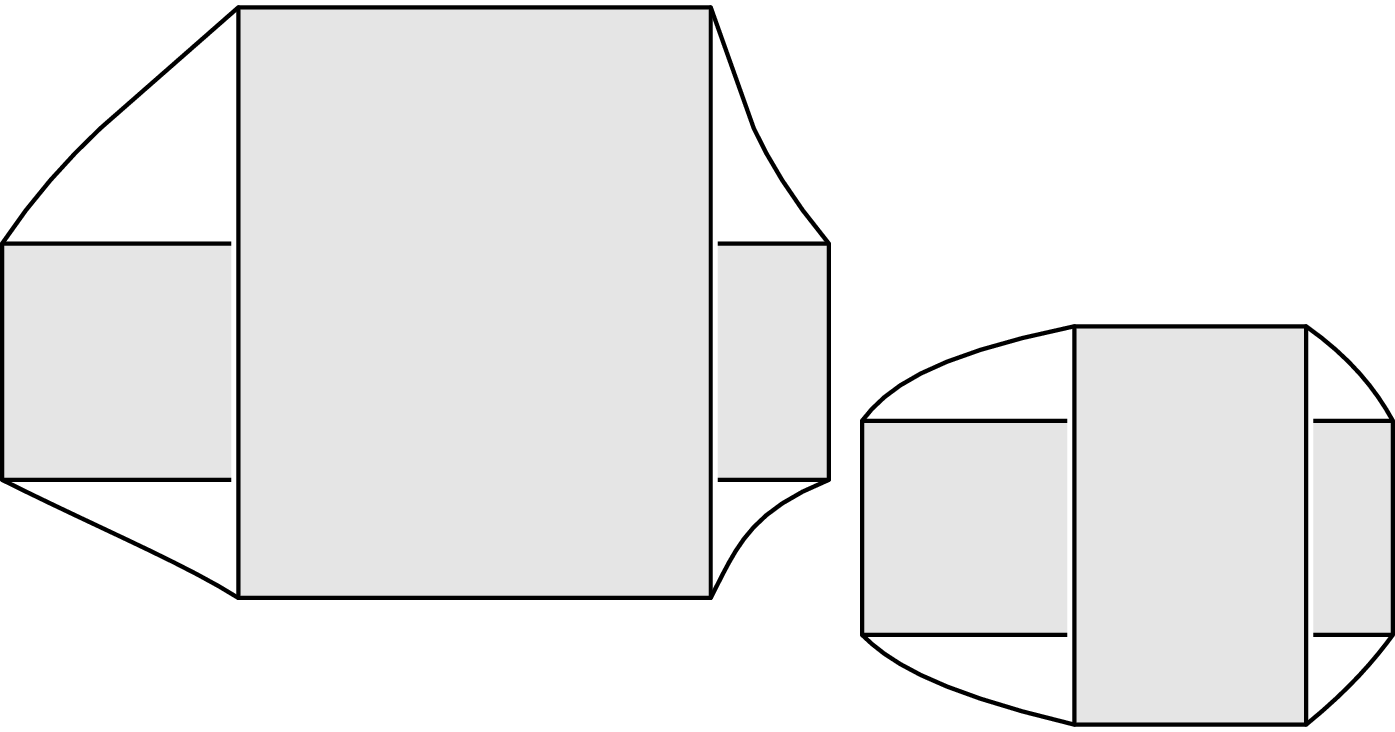}&&
\includegraphics[scale=0.3]{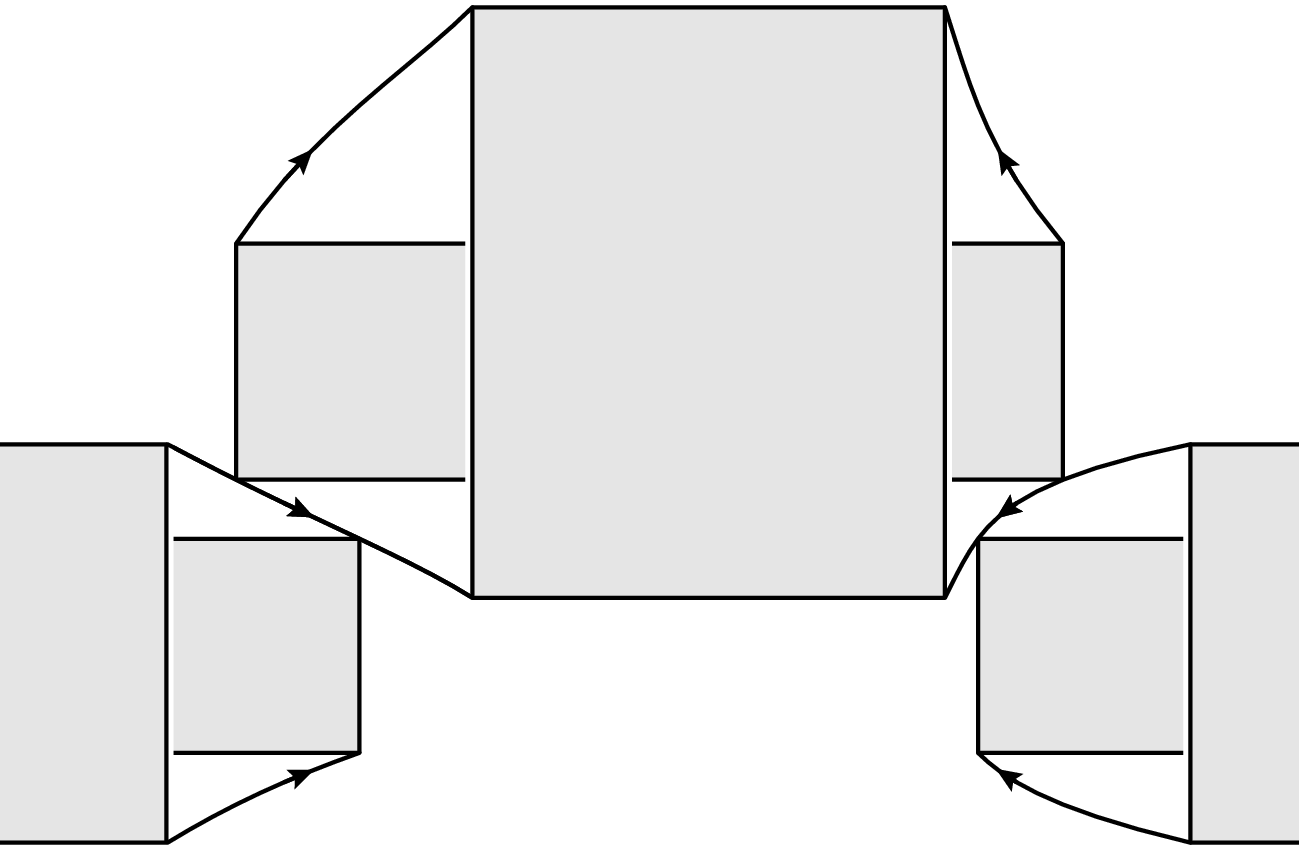}
\end{tabular}
\caption{Compatible packs of rectangles}\label{compat-pack-fig}
\end{figure}
\end{defi}

For a pack of rectangles~$P$, we denote by~$\widehat P$ the following subset of~$\mathbb S^3$:
$$\widehat P=\bigcup_{r\in P}\widehat r.$$
According to Proposition~\ref{octagon-prop}
the set~$\widehat P=\bigcup_{r\in P}\widehat r\subset\mathbb S^3$ is a homeomorphic image of the cube~$[0;1]^3$,
and the decomposition of this image into the discs~$\widehat r$, $r\in P$, is a foliation (even a fibration).
The ordering in~$P$ endows this foliation with a coorientation. We denote this cooriented foliation by~$\mathscr F(P)$.

Observe that the images of the four edges~$[0;1]\times\{0,1\}\times\{0,1\}$ of the cube~$[0;1]^3$
under the map~\eqref{cube-map-eq} depend only on the first and the last rectangles in~$P$.
Indeed, if~$P$ is a positive pack of rectangles, these images are the arcs
$$[\theta_-(r_{\mathrm{min}}(P));\theta_-(r_{\mathrm{max}}(P))],\quad
[\theta_+(r_{\mathrm{max}}(P));\theta_+(r_{\mathrm{min}}(P))]$$
of~$\mathbb S^1_{\tau=1}$ and
$$[\varphi_-(r_{\mathrm{max}}(P));\varphi_-(r_{\mathrm{min}}(P))],\quad
[\varphi_+(r_{\mathrm{min}}(P));\varphi_+(r_{\mathrm{max}}(P))]$$
of~$\mathbb S^1_{\tau=0}$. In the case of a negative pack, $r_{\mathrm{min}}$ and~$r_{\mathrm{max}}$ are exchanged.

These four arcs are transverse to the foliation~$\mathscr F(P)$, and
the corner arcs of~$P$ are the graphs of the holonomy maps between these transversals.

When two compatible packs of rectangles~$P$ and~$P'$ share a part of a corner arc,
then the respective `curved cubes'~$\widehat P$ and~$\widehat P'$ share
a part of a two-face and the foliations~$\mathscr F(P)$, $\mathscr F(P')$ agree
on the common part of these two-faces.

\section{Representing a Reeb component}\label{discs-sec}
In this section, we try to foliate the complement of an unknot by surfaces represented by a continuous family of rectangular diagrams,
and see the difficulties that arise in such an approach.

For convenience, we rescale the~$\theta$ and~$\varphi$ coordinates so that they take values in~$\mathbb R/\mathbb Z$
instead of~$\mathbb R/(2\pi\mathbb Z)$.
Let $R$ be the following rectangular diagram of the unknot:
$$R = \left\{\left(\frac{1}{4}, \frac{1}{4}\right), \left(\frac{1}{4}, \frac{3}{4}\right), \left(\frac{3}{4}, \frac{1}{4}\right), \left(\frac{3}{4}, \frac{3}{4}\right)\right\}.$$
We will now try to foliate the complement~$\mathbb S^3\setminus N_{1/16}(R)$, where~$N_t(R)$
is defined by~\eqref{N_t(R)-eq}, so that~$\Omega_{1/16}(R)$ is a fiber and~~$\mathbb S^3\setminus\overline{N_{1/16}(R)}$
is foliated by open discs represented by rectangular diagrams. This foliation would be nothing else but a Reeb component.

We are trying to construct a foliation which is tangent to
the boundary instead of being transverse to it because constructing a foliation transverse to the boundary
would involve surfaces with non-empty boundary, which we want to avoid
for the following two reasons. First, the boundary of a surface represented by a rectangular
diagram is typically not $C^1$-smooth. Second, at every boundary point, such a surface
must be tangent either to the standard contact structure or to its mirror image, which
is too restrictive.

So, instead of dealing with surfaces with boundary, we consider open surfaces that are wound
on a tubular neighborhood of the link
as shown in Figure~\ref{wound_surface}. The leaves of the foliations discussed below will always
behave like this.

\begin{figure}[h]
    \centering
    \includegraphics[scale=0.25]{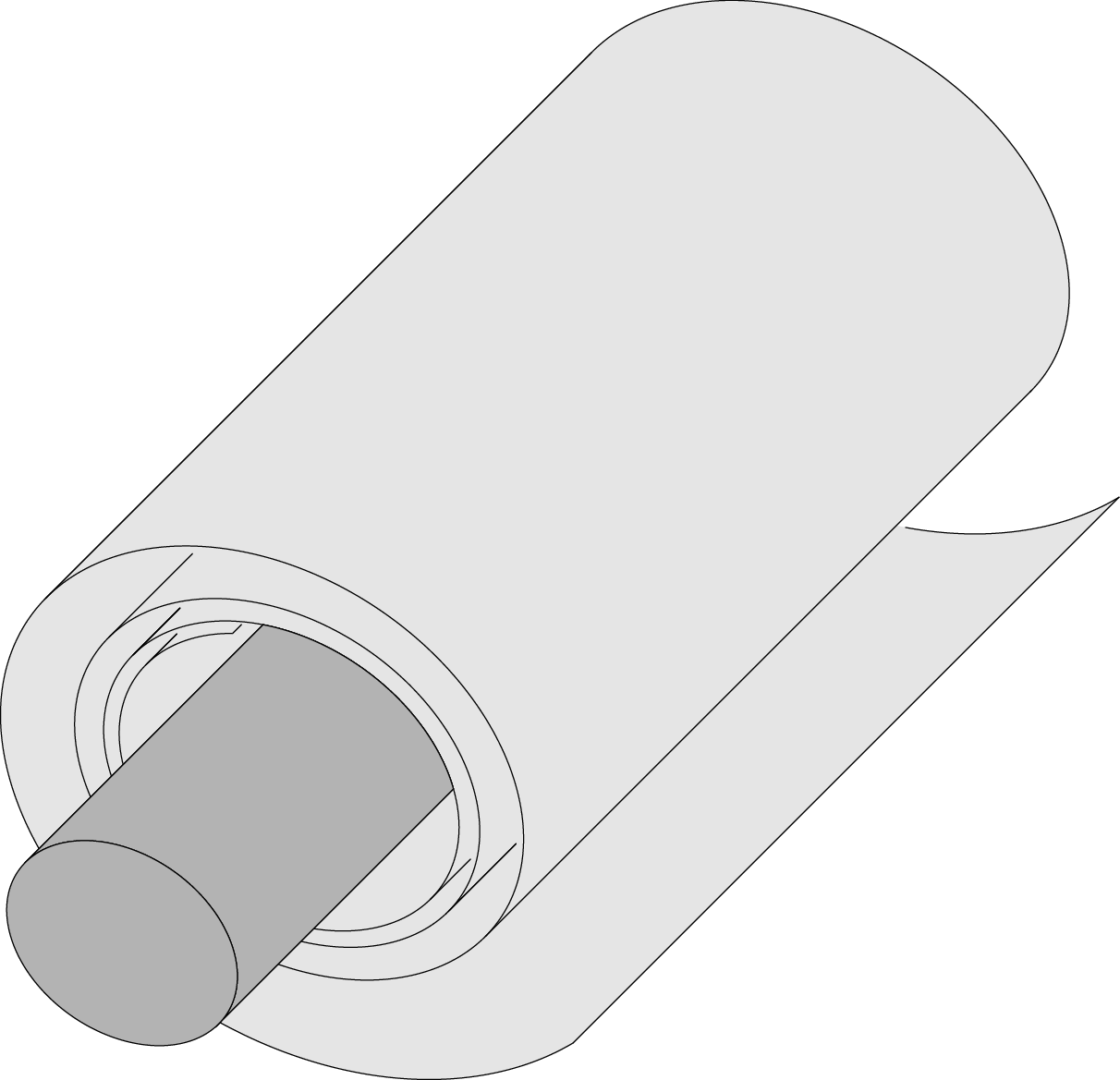}
    \caption{}
    \label{wound_surface}
\end{figure}

Let~$\Pi$ be the collection~$\{r_0,r_1,r_2,\ldots\}$ of the following rectangles:
\begin{align*}
    r_0 &= \left[\frac38; \frac58\right] \times\left[\frac18; \frac78\right],\\
    r_{8k-7} &= \left[\frac{3}{16} - \frac{1}{2^{k + 3}}; \frac{5}{16} + \frac{1}{2^{k + 3}}\right] \times\left[\frac{13}{16} + \frac{1}{2^{k + 3}}; \frac{3}{16} - \frac{1}{2^{k + 3}}\right],\\
    r_{8k-6} &= \left[\frac{11}{16} - \frac{1}{2^{k + 3}}; \frac{13}{16} + \frac{1}{2^{k + 3}}\right] \times\left[\frac{13}{16} + \frac{1}{2^{k + 3}}; \frac{3}{16} - \frac{1}{2^{k + 3}}\right],\\
    r_{8k-5} &= \left[\frac{13}{16} + \frac{1}{2^{k + 3}}; \frac{3}{16} - \frac{1}{2^{k + 3}}\right] \times\left[\frac{3}{16} - \frac{1}{2^{k + 3}}; \frac{5}{16} + \frac{1}{2^{k + 3}}\right],\\
    r_{8k-4} &= \left[\frac{13}{16} + \frac{1}{2^{k + 3}}; \frac{3}{16} - \frac{1}{2^{k + 3}}\right] \times\left[\frac{11}{16} - \frac{1}{2^{k + 3}}; \frac{13}{16} + \frac{1}{2^{k + 3}}\right],\\
    r_{8k-3} &= \left[\frac{3}{16} - \frac{1}{2^{k + 3}}; \frac{5}{16} + \frac{1}{2^{k + 4}}\right] \times\left[\frac{5}{16} + \frac{1}{2^{k + 3}}; \frac{11}{16} - \frac{1}{2^{k + 3}}\right],\\
    r_{8k-2} &= \left[\frac{11}{16} - \frac{1}{2^{k + 4}}; \frac{13}{16} + \frac{1}{2^{k + 3}}\right] \times\left[\frac{5}{16} + \frac{1}{2^{k + 3}}; \frac{11}{16} - \frac{1}{2^{k + 3}}\right],\\
    r_{8k-1} &= \left[\frac{5}{16} + \frac{1}{2^{k + 4}}; \frac{11}{16} - \frac{1}{2^{k + 4}}\right] \times\left[\frac{3}{16} - \frac{1}{2^{k + 4}}; \frac{5}{16} + \frac{1}{2^{k + 3}}\right],\\
    r_{8k}   &= \left[\frac{5}{16} + \frac{1}{2^{k + 4}}; \frac{11}{16} - \frac{1}{2^{k + 4}}\right] \times\left[\frac{11}{16} - \frac{1}{2^{k + 3}}; \frac{13}{16} + \frac{1}{2^{k + 4}}\right],
\end{align*}

where~$k\in\mathbb N$.
    Define a function~$\epsilon:\Pi\rightarrow\{-1,1\}$ as follows:
    \begin{equation*}
    \epsilon(r_i) = \left\{
        \begin{aligned}
            -1& \text{ if } i\equiv 1,2,5,\text{ or }6\ (\mathrm{mod}\ 8),\\
            1& \text{ if } i\equiv 0,3,4,\text{ or }7\ (\mathrm{mod}\ 8).
        \end{aligned}
    \right.
    \end{equation*}
    
    One can verify that the pair $(\Pi, \epsilon)$ is an oriented rectangular diagram of a surface.
The diagram $\Pi$ and the corresponding tiling of $\widehat \Pi$ are shown in the Fig~\ref{fibred_unknot-fig}.
\begin{figure}[ht]
            \includegraphics[scale = 7]{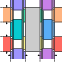}
           \put(-182, -10){$\frac1 8$}
            \put(-132, -10){$\frac3 8$}
            \put(-83, -10){$\frac5 8$}
            \put(-33, -10){$\frac7 8$}
            \put(-223, 36){$\frac1 8$}
            \put(-223, 86){$\frac3 8$}
            \put(-223, 136){$\frac5 8$}
            \put(-223, 186){$\frac7 8$}
            \put(-107, 112){$r_0$}
            \put(-195, 60){$r_{3}$}
            \put(-195, 160){$r_{4}$}    
            \put(-136, 60){\tiny $r_{7}$}
            \put(-136, 160){\tiny $r_{8}$}
            \put(-180, 110){\tiny $r_{5}$}
            \put(-35, 110){\tiny $r_{6}$}
            \put(-136, 200){\tiny $r_{1}$}
            \put(-80, 200){\tiny $r_{2}$}
            \hskip1cm
            \includegraphics[scale=0.6]{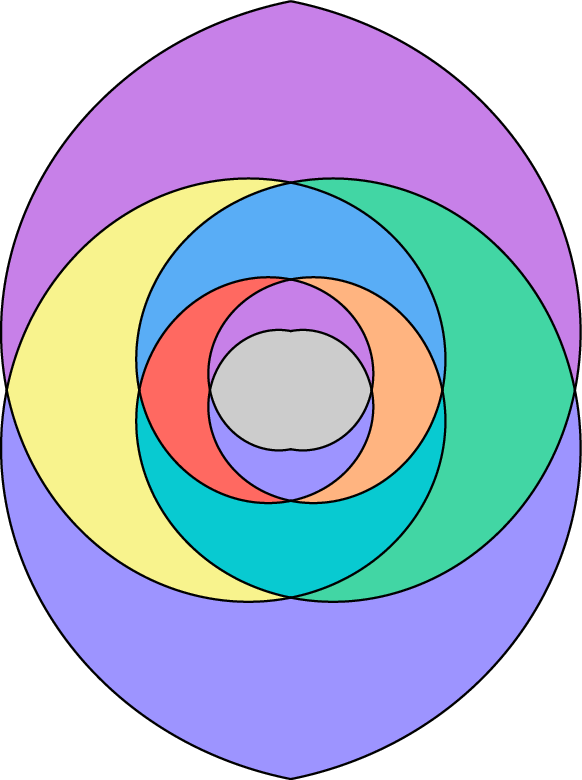}
           \put(-88, 110){$\widehat r_0$}
            \put(-88, 88){\scriptsize $\widehat r_{1}$}
            \put(-88, 135){\scriptsize $\widehat r_{2}$}
            \put(-123, 110){ $\widehat r_{3}$}
            \put(-60, 110){ $\widehat r_{4}$}
            \put(-88, 65){$\widehat r_{5}$}
            \put(-88, 152){$\widehat r_{6}$}
            \put(-26, 110){$\widehat r_{8}$}
            \put(-150, 110){$\widehat r_{7}$}
            \put(-88, 23){$\widehat r_{9}$}
            \put(-88, 195){$\widehat r_{10}$}
        \caption{The rectangular diagram $\Pi$ and the associated tiling of the surface $\widehat\Pi$}\label{fibred_unknot-fig}
    \end{figure}

\begin{figure}
\begin{tabular}{ccc}
    \includegraphics[scale = 6]{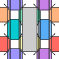}
&\hbox to1cm{}&
    \includegraphics[scale = 6]{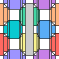}
\\
    $\Pi$&&$f_1(\Pi)$
\end{tabular}
    \caption{Trajectories of vertices of $\Pi$ under the deformation $f$, and the final rectangular diagram $f_1(\Pi)$}\label{deform-traj-fig}
\end{figure}

    For any $k\in\mathbb N$, denote by~$A_k$ the following collection of eight rectangles:
$$\bigcup_{i =8k-7}^{8k}  \{r_i\}.$$
One can see that~$\widehat A_k$ is an annulus for all~$k\in\mathbb N$,
and the surface~$\widehat D_k$, where
$$D_k = \{r_0\} \cup \bigcup_{i=1}^kA_i,$$
is a two-disc. Hence, the surface $\widehat\Pi = \bigcup_{n=1}^\infty\widehat D_n$ is an open two-disk.
One can also see that the annuli~$\widehat A_1,\widehat A_2,\widehat A_3,\ldots$ accumulate to the tube~$\widehat\Omega_{1/16}(R)$.
    
It is a direct check that
$$\begin{aligned}
    \Theta_-(\Pi, \epsilon) &= \Phi_-(\Pi, \epsilon)=\left\{\frac{3}{16}-\frac{1}{2^k}, \frac{11}{16}-\frac{1}{2^k}\right\}_{k\geqslant4},\\
    \Theta_+(\Pi, \epsilon) &= \Phi_+(\Pi, \epsilon)=\left\{\frac{5}{16}+\frac{1}{2^k}, \frac{13}{16} + \frac{1}{2^k}\right\}_{k\geqslant4}.    
    \end{aligned}$$
Define a positive deformation $f = (f^0, f^1)$ of $\Pi$ by
$$
    f^0=f^1 : \quad
        \left(\frac{4\pm1}{16}\pm\frac{1}{2^{k}}, t\right)\mapsto \frac{4\pm1}{16}\pm\frac{1}{2^{k - t}},\quad
        \left(\frac{12\pm1}{16}\pm\frac{1}{2^{k}}, t\right)\mapsto \frac{12\pm1}{16}\pm\frac{1}{2^{k - t}}, \quad  k=4, 5,\ldots.
$$
Figure~\ref{deform-traj-fig} demonstrates how the vertices of~$\Pi$ move under this deformation.
For any $i\geqslant9$, the image $f_1(r_{i})$ of~$r_i$ under this deformation  is the rectangle $r_{i-8}$.
So, each~$A_k$, $k\geqslant2$, is deformed to~$A_{k-1}$.

It follows from Proposition~\ref{thickened_surface-prop} that there is an immersion~$I_f:\widehat\Pi\times[0;1]\rightarrow\mathbb S^3$
that takes~$\widehat\Pi\times\{t\}$ to~$\widehat{f_t(\Pi)}$ for any~$t\in[0;1]$. Moreover, for any proper
subinterval~$[a;b]$ of~$[0;1]$, the restriction of this immersion to~$\widehat\Pi\times[a;b]$ is an embedding.

If we had~$\Pi=f_1(\Pi)$, we would be able to identify the discs~$\widehat\Pi\times\{0\}$ with~$\widehat\Pi\times\{1\}$
so as to make~$I_f$ an embedding~$\widehat\Pi\times\mathbb S^1\rightarrow\mathbb S^3$. But the equality~$\Pi=f_1(\Pi)$ does
not hold, and the image of~$I_f$ is not the entire complement of~$\overline{N_{1/16}(R)}$.
However, the difference between~$\Pi$ and~$f_1(\Pi)$ is not that big. Namely,
$$\Pi\setminus f_1(\Pi)=D_0,\quad
f_1(\Pi)\setminus\Pi=f_1(D_1),\quad
\text{and}\quad
\Pi\cap f_1(\Pi)=\bigcup_{k=1}^\infty A_k.$$

Thus, the coincident part of the open two-discs~$\widehat\Pi$ and~$\widehat{f_1(\Pi)}$ is the half-open
annulus~$\bigcup_{k=1}^\infty\widehat A_k$, whose boundary curve is glued up by
the disc~$\widehat D_0$ in~$\widehat\Pi$ and by~$\widehat{f_1(D_1)}$ in~$\widehat{f_1(\Pi)}$.
The two-discs~$\widehat D_0=\widehat r_0$ and~$\widehat{f_1(D_1)}$ enclose
an open three-ball disjoint from~$\widehat R$, which we denote by~$C$.

Thus, we have almost obtained a foliation of~$\mathbb S^3\setminus N_{1/16}(R)$ in which all leaves
exept~$\partial N_{1/16}(R)$ are open discs of the form~$\widehat{f_t(\Pi)}$,
and the only defect of the construction is an open three-ball~$C$, which is disjoint from these surfaces.
However, if we remove~$C$ from~$\mathbb S^3$ and
identify the discs~$\widehat D_0$ and~$\widehat{f_1(D_1)}$
by a homeomorphism identical on their common boundary, we will still get~$\mathbb S^3$, and
the discs~$\widehat{f_t(\Pi)}$ will become leaves of a genuine foliation in the complement of~$N_{1/16}(R)$.

This simple example demonstrates the difficulty with representing \emph{all} leaves of a foliation by rectangular diagrams---some
open subset of~$\mathbb S^3$ remains empty. However, the empty regions will have very special form that will allow
to deflate them without disturbing the topology of the three-sphere. A formal description
of those regions as well as a way to detect them by means of rectangular diagrams is the matter of the next section.

\section{Cavities}\label{cavit-sec}

\begin{defi}\label{cavity-def}
Let~$C\subset\mathbb S^3$ be an open subset of the three-sphere such that there exist a compact quasi-surface~$F\subset\mathbb S^3$,
a $1$-subcomplex~$\Gamma\subset F$ consisting of finitely many smooth arcs,
and a smooth isotopy~$\Psi:F\times[0;1]\rightarrow\mathbb S^3$ such that the following holds:
\begin{enumerate}
\item
$\Gamma\supset\partial F$;
\item
the restriction of~$\Psi$ to~$(F\setminus\Gamma)\times[0;1]$ is an embedding;
\item
$\Psi(p,t)=\Psi(p,0)$ for all~$(p,t)\in\Gamma\times[0;1]$, and the differential of~$\Psi(\cdot,t)$ is identical on~$\Gamma$;
\item
$C=\Psi\bigl((F\setminus\Gamma)\times(0;1)\bigr)$.
\end{enumerate}
Then the subset~$C$ is called \emph{a cavity}, and the projection map~$\mathfrak d_C:\mathbb S^3\setminus C\rightarrow(\mathbb S^3\setminus C)/{\sim}$,
where~$\sim$ is the following equivalence relation:
$$p\sim q\quad\text{iff}\quad p=q\text{ or }p\in F\text{ and }q=\Psi(p,1),$$
is referred to as \emph{deflating} the cavity~$C$.

If, additionally, $F_1,F_2\subset\mathbb S^3$ are two oriented quasi-surfaces such that
\begin{enumerate}
\item
$F_1\setminus F_2=F\setminus\Gamma$;
\item
$F_2\setminus F_1=\Psi((F\setminus\Gamma)\times\{1\})$;
\item
the positive normal to~$F_1$ at any point~$p\in F_1\setminus F_2$ points inward~$C$;
\item
the positive normal to~$F_2$ at any point~$p\in F_2\setminus F_1$ points outward~$C$
\end{enumerate}
we say that~$F_2$ is obtained from~$F_1$ by \emph{a positive leap over~$C$}.
\end{defi}

Observe that, in this definition, the deflating map~$\mathfrak d_C$ is not determined by~$C$ itself as the definition involves an isotopy~$\Psi$.
However, the obtained space~$(\mathbb S^3\setminus C)/{\sim}$ is clearly homeomorphic to~$\mathbb S^3$,
so~$\mathfrak d_C$ can be viewed as a map from~$\mathbb S^3\setminus C$ to~$\mathbb S^3$ defined up to
postcomposing with a self-homemorphism of~$\mathbb S^3$. Moreover, clearly, the arbitrariness in the definition
allows to make~$\mathfrak d_C$ identical on~$F$ and smooth. These properties of~$\mathfrak d_C$ will be assumed to hold in the sequel.

\begin{lemm}\label{h1...hk-lem}
Let~$F_0,F_1,\ldots,F_m$ be connected oriented quasi-surfaces in~$\mathbb S^3$, and let~$C_1,\ldots,C_m$ be cavities such that,
for each~$i=1,\ldots,m$,
\begin{enumerate}
\item
the quasi-surface~$F_i$ is obtained from~$F_{i-1}$ by a positive leap over~$C_i$\emph;
\item
$F_i\cap F_0\subset F_j$ for~$j<i$\emph;
\item
$C_i\cap F_0=\varnothing$.
\end{enumerate}
Denote the interior of the union~$\overline C_1\cup\ldots\cup\overline C_m$ by~$C$.
Then~$C$ is a cavity, and~$F_m$ is obtained from~$F_0$ by a positive leap over~$C$.
\end{lemm}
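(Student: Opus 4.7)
I would prove the lemma by induction on $m$. The base case $m=1$ is trivial. For the inductive step, applying the induction hypothesis to $F_0,\ldots,F_{m-1}$ and $C_1,\ldots,C_{m-1}$ produces a cavity $C'=\interior(\overline{C_1}\cup\cdots\cup\overline{C_{m-1}})$ over which $F_{m-1}$ is obtained from $F_0$ by a positive leap. Since $\interior(\overline{C'}\cup\overline{C_m})=\interior(\overline{C_1}\cup\cdots\cup\overline{C_m})$ and the conditions $F_m\cap F_0\subset F_{m-1}$, $C_m\cap F_0=\varnothing$ are exactly hypotheses (2) and~(3) of the lemma for $i=m$, it suffices to treat the two-cavity case, which I describe below.

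Given leaps $F_0\to F_1$ over $C_1$ and $F_1\to F_2$ over $C_2$ with $F_2\cap F_0\subset F_1$ and $C_2\cap F_0=\varnothing$, let $(\widetilde F^i,\widetilde\Gamma^i,\widetilde\Psi^i)$ be the data witnessing that $C_i$ is a cavity. The hypotheses yield the disjoint decomposition
\[
F_0\setminus F_2=A_1\sqcup A_2,\qquad A_1=F_0\setminus F_1,\qquad A_2=F_0\cap(F_1\setminus F_2),
\]
where $A_1$ is identified with $\widetilde F^1\setminus\widetilde\Gamma^1$ via $\widetilde\Psi^1(\cdot,0)$ and $A_2$ with an open subset of $\widetilde F^2\setminus\widetilde\Gamma^2$ via $\widetilde\Psi^2(\cdot,0)$. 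I would take $F$ to be obtained by glueing $\widetilde F^1$ to the closure in $\widetilde F^2$ of the $A_2$-piece along the $1$-subcomplex in $F_1$ where the two images abut, with $\Gamma$ containing the combined boundaries together with this glueing locus.

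The combined isotopy $\Psi\colon F\times[0;1]\to\mathbb S^3$ is constructed in two stages of duration $1/2$. Stage one applies a time-reparametrized $\widetilde\Psi^1$ on the $\widetilde F^1$-side and is stationary on the $\widetilde F^2$-side; stage two applies a time-reparametrized $\widetilde\Psi^2$ both on the $\widetilde F^2$-side and on the portion of the stage-one image that lies in $F_1\setminus F_2$ (using the identification $F_1\setminus F_2=\widetilde\Psi^2(\widetilde F^2\setminus\widetilde\Gamma^2,0)$), while leaving the rest fixed. After a $C^1$-smoothing near $t=1/2$, one gets an isotopy with $\Psi(\cdot,0)$ embedding $F\setminus\Gamma$ onto $F_0\setminus F_2$, $\Psi(\cdot,1)$ embedding it onto $F_2\setminus F_0$, and $\Psi|_{\Gamma\times[0;1]}$ stationary with identical differential.

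The main obstacle is verifying that $\Psi((F\setminus\Gamma)\times(0;1))$ equals $C=\interior(\overline{C_1}\cup\overline{C_2})$ rather than merely $C_1\cup C_2$. The inclusion into $\overline{C_1}\cup\overline{C_2}$ is clear from construction; the reverse requires that the seam $\overline{C_1}\cap\overline{C_2}\cap F_1$ be covered by the middle time slice $\Psi(\cdot,1/2)$, which holds because at $t=1/2$ the image of $\Psi$ is precisely the portion of $F_1$ separating $C_1$ from $C_2$. The hypothesis $F_2\cap F_0\subset F_1$ is crucial here: it forbids the stage-one image from re-entering $F_0$ during stage two, which would destroy both the embedding property on proper subintervals of $(0;1)$ and the normal-orientation conditions. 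Granted this, the positive-normal conditions of Definition~\ref{cavity-def} for the combined leap over $C$ are inherited additively from those of the two individual leaps, which completes the proof.
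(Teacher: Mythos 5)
Your overall strategy --- induction down to the two-cavity case, followed by concatenating the two isotopies (reparametrized to $[0;1/2]$ and $[1/2;1]$, each extended by the identity where the corresponding surface does not move) --- is exactly the route the paper takes; the paper simply sets $F=\overline{F_0\setminus F_2}$ outright instead of describing it as a gluing of $\widetilde F^1$ with part of $\widetilde F^2$, which amounts to the same thing via your decomposition $F_0\setminus F_2=A_1\sqcup A_2$.

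There is, however, one condition of Definition~\ref{cavity-def} that your construction as written does not satisfy and that you do not repair: condition~(2) requires the restriction of $\Psi$ to $(F\setminus\Gamma)\times[0;1]$ to be an embedding \emph{of the product}. Your $\Psi$ is constant in $t$ on $A_2\times[0;1/2]$ (the $\widetilde F^2$-side is stationary during stage one) and on $\{x\}\times[1/2;1]$ for any $x\in A_1$ whose stage-one image lands in $F_1\cap F_2$, so entire intervals are collapsed to points and $\Psi$ is not injective on the product. A $C^1$-smoothing near $t=1/2$ cures the lack of smoothness at the junction but not this failure of injectivity. The repair is the paper's final step: each trace $\Psi(\{x\}\times[0;1])$ is traversed monotonically, and distinct traces are pairwise disjoint (here one also needs $C_1\cap C_2=\varnothing$, which follows because the two cavities approach $F_1$ from opposite sides and $C_2\cap F_0=\varnothing$ --- a fact you use implicitly but never verify), so a small perturbation of $\Psi$ fixed on $\overline{F_0\setminus F_2}\times\{0,1\}$ makes it an embedding. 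With this added, your argument matches the paper's.
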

\begin{figure}[h]
    \begin{tabular}{c}
    \includegraphics[scale=1]{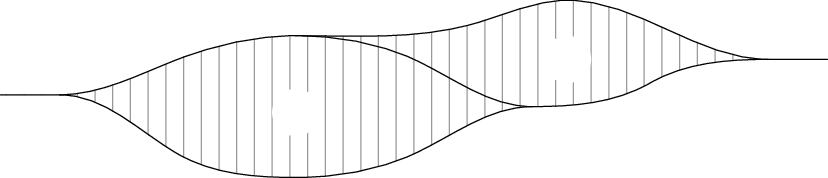}
    \put(-260, 29){$C_1$}
    \put(-130, 55){$C_2$}
    \put(-175, 15){$F_0$}
    \put(-175, 50){$F_1$}
    \put(-175,81){$F_2$}
    \\
    $\downarrow$\\
    \includegraphics[scale=1]{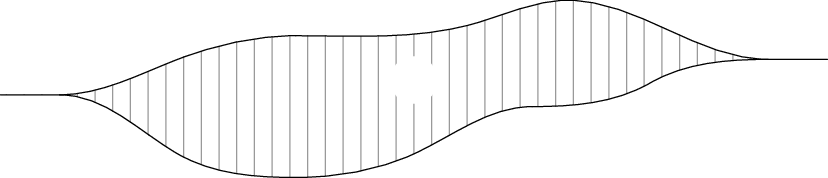}
    \put(-205, 44){$C$}  
    \end{tabular}
    \caption{Two cavities merged}
    \label{merged_cavities}
\end{figure}

\begin{proof}
The statement is trivial in the case~$m=1$. If we prove it for~$m=2$, then the general case will follow by induction in~$m$.
So, we assume~$m=2$ in the sequel. This case is illustrated in Figure~\ref{merged_cavities}.

Let
$$\Psi_1:\overline{F_0\setminus F_1}\times[0;1]\rightarrow\mathbb S^3\quad
\text{and}\quad
\Psi_2:\overline{F_1\setminus F_2}\times[0;1]\rightarrow\mathbb S^3$$
be isotopies satisfying the requirements from Definition~\ref{cavity-def} for the cavities~$C_1$ and~$C_2$,
respectively. Extend~$\Psi_1$ and~$\Psi_2$ to the entire surfaces~$F_0$ and~$F_1$ by
$$\Psi_i(x,t)=x\quad\text{for all}\ x\in F_{i-1}\setminus F_i,\ t\in[0;1],\ i\in\{1,2\},$$
and define~$\Psi:F_0\times[0;1]\rightarrow\mathbb S^3$ as follows:
$$\Psi(x,t)=\left\{\begin{aligned}
&\Psi_1(x,2t)&\text{if }t\leqslant1/2,\\
&\Psi_2(\Psi_1(x,1),2t-1)&\text{if }t>1/2.
\end{aligned}\right.$$

Since~$C_1$ and~$C_2$ approach~$F_1$ from opposite sides and~$C_2\cap F_0=\varnothing$,
we have~$C_2\cap C_1=\varnothing$, and the map~$\Psi$ restricted to~$\overline{F_0\setminus F_2}\times[0;1]$ satisfies
the requirements of Definition~\ref{cavity-def} except that it is not an embedding on~$(F_0\setminus F_2)\times[0;1]$.
However, for every~$x\in F_0\setminus F_2$, the map~$\Psi$ takes the interval~$x\times[0;1]$
to its image monotonically, and the arcs~$\Psi(x\times[0;1])$ and~$\Psi(x'\times[0;1])$ are disjoint provided~$x\ne x'$.
Therefore, by a small perturbation of~$\Psi$ we can make it an embedding on~$(F_0\setminus F_2)\times[0;1]$
keeping it fixed on~$\overline{F_0\setminus F_2}\times\{0,1\}$.
\end{proof}

Now we define two types of modifications of rectangular diagrams of (quasi-)surfaces
that result in the corresponding surfaces leaping over cavities. These modifications
are called bubble moves and flypes, and the respective cavity is an open three-ball in each case.
These moves were introduced in~\cite{Distinguishing,Basic_moves} (for finite rectangular diagrams of surfaces without orientation).

\begin{defi}\label{bubble_move-def}
Let~$\Pi$ and~$\Pi'$ be rectangular diagrams of a quasi-surface, and let~$r,r_1,r_2,r_3$ be rectangles
such that
\begin{enumerate}
\item
$\Pi\setminus\Pi'=\{r\}$, $\Pi'\setminus\Pi=\{r_1,r_2,r_3\}$;
\item
$r_1\cup r_2\cup r_3=\overline{r\mathbin\triangle S}$, where~$S\subset\mathbb T^2$ is an annulus of the form
either~$[\theta_1;\theta_2]\times\mathbb S^1$
or~$\mathbb S^1\times[\varphi_1;\varphi_2]$ whose intersection with~$r$ is a smaller rectangle, and~$\triangle$ denotes the symmetric difference;
\item~$r_1,r_2\subset r$, $r_3\subset S$.
\end{enumerate}
Let also~$\epsilon$ and~$\epsilon'$ be orientations of~$\Pi$ and~$\Pi'$, respectively that agree on~$\Pi\cap\Pi'$
such that~$\epsilon(r)=\epsilon'(r_1)=\epsilon'(r_2)$.
Then we say that~$(\Pi',\epsilon')$ is obtained from~$(\Pi,\epsilon)$ (or~$\Pi'$ from~$\Pi$) by \emph{a bubble creation move},
and~$(\Pi,\epsilon)$ from~$(\Pi',\epsilon')$ (or~$\Pi$ from~$\Pi'$) by \emph{a bubble reduction move}. This is illustrated
in Figure~\ref{fig:Bubble_move}.
\begin{figure}[ht]
\begin{tabular}{ccccc}
\includegraphics[scale=0.85]{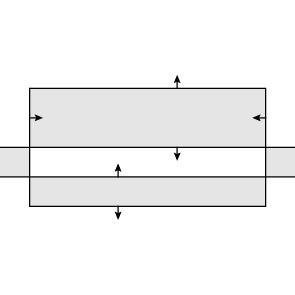}
\put(-60, 71){$r_1$}
\put(-10, 53){$r_3$}
\put(-60, 41){$r_2$}
&\raisebox{60pt}{$\longrightarrow$}
&\includegraphics[scale=0.85]{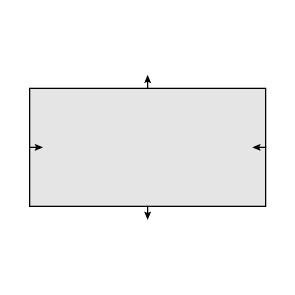}
\put(-62, 58){$r$}
&\raisebox{60pt}{$\longrightarrow$}
&\includegraphics[scale=0.85]{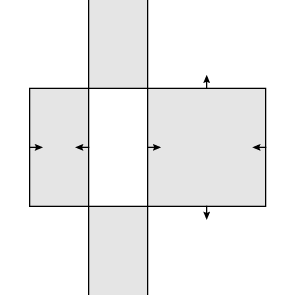}
\put(-100, 58){$r_1$}
\put(-39, 58){$r_2$}
\put(-76, 100){$r_3$}
\\
\includegraphics[scale=0.85]{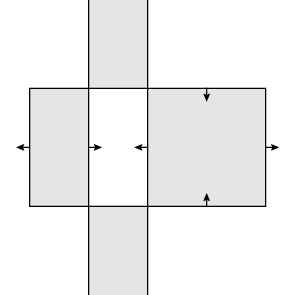}
\put(-100, 58){$r_1$}
\put(-39, 58){$r_2$}
\put(-76, 100){$r_3$}
&\raisebox{60pt}{$\longrightarrow$}
&\includegraphics[scale=0.85]{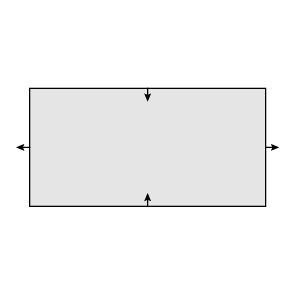}
\put(-62, 58){$r$}
&\raisebox{60pt}{$\longrightarrow$}
&\includegraphics[scale=0.85]{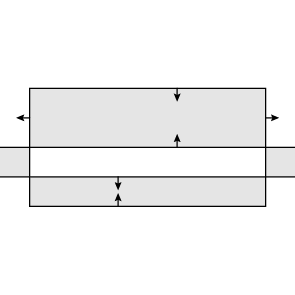}
\put(-60, 71){$r_1$}
\put(-10, 53){$r_3$}
\put(-60, 41){$r_2$}
\end{tabular}
    \caption{Positive bubble moves}
    \label{fig:Bubble_move}
\end{figure}

Let~$\delta$ be~$+1$ if~$S$ has the form~$[\theta_1;\theta_2]\times\mathbb S^1$,
and~$-1$ otherwise.
The bubble creation move~$(\Pi,\epsilon)\mapsto(\Pi',\epsilon')$ is called~\emph{positive} if~$\epsilon(r)=\delta$.
Otherwise, we call positive the bubble reduction move~$(\Pi',\epsilon')\mapsto(\Pi,\epsilon)$.
\end{defi}

\begin{lemm}\label{bubble_cavity-lem}
Let~$(\Pi_1,\epsilon_1)\mapsto(\Pi_2,\epsilon_2)$ be a positive bubble creation or reduction move.
Then~$\widehat\Pi_2$ is obtained from~$\widehat\Pi_1$ by a positive leap over a cavity
homeomorphic to an open three-ball.
\end{lemm}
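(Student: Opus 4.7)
The plan is to identify the cavity explicitly as the open three-ball bounded by the tile $\widehat r$ on one side and the combined surface $G = \widehat r_1 \cup \widehat r_2 \cup \widehat r_3$ on the other, then construct the isotopy required by Definition~\ref{cavity-def}. Since a bubble reduction is the reverse of a bubble creation and the cases $\delta = \pm 1$ are related by the $\theta \leftrightarrow \varphi$ symmetry of the construction, I focus on a positive bubble creation with $\delta = +1$: here $S = [\theta_1;\theta_2] \times \mathbb S^1$ cuts $r = [\theta_-;\theta_+] \times [\varphi_-;\varphi_+]$ into two vertical strips $r_1 = [\theta_-;\theta_1]\times[\varphi_-;\varphi_+]$, $r_2 = [\theta_2;\theta_+]\times[\varphi_-;\varphi_+]$ and a complementary rectangle $r_3 = [\theta_1;\theta_2] \times [\varphi_+;\varphi_-]$ wrapping the long way around in $\varphi$, and $\epsilon_1(r) = +1$.

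The main structural claims are that $\widehat r$ and $G$ are two embedded two-discs in $\mathbb S^3$ with common boundary $\widehat{\partial\{r\}}$, meeting only along this unknot. That $G$ is a topological disc follows by applying property~(3) of the tile construction to the pairs $(r_1,r_3)$ and $(r_2,r_3)$: the shared arcs $\widehat{(\theta_1,\varphi_\pm)}$ and $\widehat{(\theta_2,\varphi_\pm)}$ are cyclically adjacent in the boundaries of the tiles involved, so they glue disc to disc along a single longer arc, leaving free boundary exactly $\widehat{\partial\{r\}}$. For the intersection claim I compare harmonic functions: for $i=1,2$, on the common domain $r_i \subsetneq r$ the functions $h_{r_i}$ and $h_r$ agree on the three sides of $r_i$ shared with $r$, while $h_{r_i} = 1 > h_r$ on the fourth side $\theta = \theta_i$ (which lies interior to $r$); the maximum principle gives $h_{r_i} > h_r$ on the interior of $r_i$, hence $\widehat r_i$ and $\widehat r$ are separated in the $\tau$-coordinate over that interior. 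For $i=3$ the rectangles $r$ and $r_3$ have disjoint interiors in $\mathbb T^2$, overlapping only on the horizontal arcs $[\theta_1;\theta_2]\times\{\varphi_\pm\}$, which collapse under both tile constructions to the two points $[*,\varphi_\pm,0]$ on $\mathbb S^1_{\tau=0}$, already lying on $\widehat{\partial\{r\}}$. Combining these analyses, $\widehat r \cup G$ is a piecewise-smooth embedded two-sphere in $\mathbb S^3$; by Alexander's theorem it bounds an open three-ball on each side, and the one disjoint from every other tile $\widehat{r'}$, $r' \in \Pi_1 \cap \Pi_2$, is the cavity $C$ (these other tiles cannot straddle $\widehat r \cup G$ because they are compatible with $r, r_1, r_2, r_3$ all at once, so they lie in the complement of the local model).

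For the isotopy part of Definition~\ref{cavity-def} I take $F$ to be a topological two-disc carrying a one-subcomplex $\Gamma$ consisting of $\widehat{\partial\{r\}}$ together with four interior arcs modelling the seams of $G$; the three open discs of $F \setminus \Gamma$ correspond to the interiors of $\widehat r_1, \widehat r_2, \widehat r_3$. A smooth isotopy $\Psi \colon F \times [0;1] \to \mathbb S^3$ sweeping through $\overline C$, starting at the inclusion identifying $F$ with $\widehat r$ (after collapsing the interior seams of $\Gamma$) and ending at the embedding onto $G$, exists by the standard fact that any two properly embedded spanning discs in a closed three-ball with common boundary are ambient isotopic rel boundary. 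For positivity, the arrow convention of Figure~\ref{pos-neg-rect-fig} applied to the positive rectangle $r$ shows the positive normal to $\widehat r$ at interior points points into the $\varphi$-opposite region of $S$, which is the half-space containing $\widehat r_3$, hence into $C$. On the $\Pi_2$ side, the compatibility conditions \eqref{orientation1-eq}--\eqref{orientation2-eq} on the meridians $\theta = \theta_{1,2}$ combined with $\epsilon_2(r_1) = \epsilon_2(r_2) = +1$ (from the definition of a positive bubble creation) force $\epsilon_2(r_3) = -1$; a direct check of the arrow pattern then shows that the positive normals along all three tiles of $G$ point outward from $C$.

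The main obstacle is the disjointness claim $\widehat r \cap G = \widehat{\partial\{r\}}$: the interior comparison on $r_1, r_2$ is controlled cleanly by the maximum principle, but ruling out extraneous intersections at the degenerate loci $\mathbb S^1_{\tau=0}$ and $\mathbb S^1_{\tau=1}$, where several tiles simultaneously collapse onto the same points, requires the additional collapsing analysis outlined above. Once this is established, the cavity is forced by Alexander's theorem, the isotopy comes from the standard rel-boundary disc isotopy in a three-ball, and the positivity verification is a direct bookkeeping of the orientation conventions.
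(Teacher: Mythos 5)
Your overall architecture matches the paper's: $\widehat r$ and $G=\widehat r_1\cup\widehat r_2\cup\widehat r_3$ are two discs with common boundary $\widehat{\partial\{r\}}$ and disjoint interiors, the enclosed ball is the cavity, and an isotopy rel boundary plus orientation bookkeeping finish the job. Your route to the disjointness of interiors is genuinely different and is sound: the paper deduces it from the behaviour of the discs under small positive deformations (pushing $\widehat D=\widehat r$ positively creates intersections with $\widehat D'=G$, while pushing $\widehat D'$ positively removes them), whereas you compare the harmonic functions $h_{r_i}$ and $h_r$ directly via the maximum principle and then treat the collapsing loci on $\mathbb S^1_{\tau=0}\cup\mathbb S^1_{\tau=1}$ by hand. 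Your version is more computational but self-contained; the paper's version has the advantage of simultaneously identifying which of the two complementary balls is the cavity, a fact you must recover separately.

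The genuine gap is the step disposing of the remaining tiles. You define $C$ as ``the ball disjoint from every other tile'' and justify this by saying the other tiles ``lie in the complement of the local model.'' This is circular: the cavity is already pinned down by the requirement that the positive normal to $\widehat r$ point into it (conditions (3)--(4) in the second part of Definition~\ref{cavity-def}), and one must then \emph{prove} that every tile $\widehat{r'}$ with $r'\in\Pi_1\cap\Pi_2$ avoids that particular ball. Compatibility only tells you that such a tile does not cross the sphere $\widehat r\cup G$, i.e., that it lies entirely in one of the two closed complementary balls; it does not say which one, and nothing in your text rules out the wrong one. The paper's argument here has real content: every vertex of such an $r'$ is either outside $r\cup S$ or a vertex of $r$, so $r'\not\subset r\cup S$ and one can choose $v\in r'\setminus(r\cup S)$; the arc $\widehat v$ is disjoint from $\widehat r\cup G$, and its endpoint on whichever of the circles $\mathbb S^1_{\tau=0}$, $\mathbb S^1_{\tau=1}$ meets $\widehat r\cup G$ in only two points lies outside $\overline C$, whence $\widehat v\cap C=\varnothing$ and $\widehat{r'}\not\subset C$. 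Some argument of this kind is needed. A secondary, fixable slip: in Definition~\ref{cavity-def} one needs $F\setminus\Gamma=F_1\setminus F_2$ with $F$ an actual quasi-surface in $\mathbb S^3$, so for a creation one must take $F=\widehat r$ and $\Gamma=\partial\widehat r$ (and for a reduction $F=G$ with $\Gamma=\widehat{\partial\{r\}}$, \emph{not} including the seams); your $\Gamma$ containing the interior seams of $G$ would force $\Psi$ to fix arcs that are not contained in the time-zero surface at all.
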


\begin{proof}
Let~$\Pi,\Pi',\epsilon,\epsilon',r,r_1,r_2,r_3,S$, and~$\delta$ be as in Definition~\ref{bubble_move-def}.
It is a direct check that~$D=\{r\}$ and~$D'=\{r_1,r_2,r_3\}$
are rectangular diagrams representing two-discs, $\widehat D\cong\widehat D'\cong\mathbb D^2$.
These discs have common boundary, and their oriented tangent planes coincide
at every point of~$\partial\widehat D=\partial\widehat D'$.

Let~$f$ and~$f'$ be positive deformations of~$D$ and~$D'$
with respect to orientations inherited from~$\Pi$ and~$\Pi'$, respectively.
Suppose that~$\epsilon(r)=\delta$. Then it is another direct check that, for small enough~$t>0$
the rectangle~$f_t(r)$ is incompatible with~$r_i$, $i=1,2,3$, whereas~$f'_t(r_i)$ is
compatible and does not share a corner with~$r$, $i=1,2,3$. This means that a small push of~$\widehat D$ in the positive
transverse direction defined by the orientation~$\epsilon$ makes~$\widehat D$ intersect~$\widehat D'$ non-trivially,
whereas a small push of~$\widehat D'$ in the positive transverse direction makes it disjoint from~$\widehat D$.
This implies that~$\widehat D$ and~$\widehat D'$ have disjoint interiors and enclose an open three-ball~$C$
such that~$\widehat{f_t(D)}\cap C\ne\varnothing$ and~$\widehat{f'_t(D')}\cap C=\varnothing$ for small enough~$t>0$.

Let~$v$ be an arbitrary point in~$\mathbb T^2\setminus(r\cup S)$. Then the arc~$\widehat v$ is disjoint from~$\widehat D$
and~$\widehat D'$. Therefore, it is either contained in~$C$ or disjoint from~$C$.
One of the circles~$\mathbb S^1_{\tau=0}$ and~$\mathbb S^1_{\tau=1}$ has only two
intersection points with~$\widehat D\cup\widehat D'$ and no intersection with~$C$. Therefore,
we have~$\widehat v\cap C=\varnothing$.

This implies that, for any~$r'\in\Pi\cap\Pi'$, the tile~$\widehat r'$ is disjoint from~$C$.
Indeed, by hypothesis, $r'$ is compatible with all rectangles in~$D$ and~$D'$,
so~the interior of~$\widehat r'$ is disjoint from~$\widehat D\cup\widehat D'$.
Each vertex of~$r'$ is either outside~$r\cup S$ or coincides with a vertex of~$r$,
which implies~$\partial\widehat r'\cap C=\varnothing$. The inclusion~$\widehat r'\setminus\partial\widehat r'\subset C$
is impossible, since we have~$\widehat v\cap\widehat r'\ne\varnothing$,
$\widehat v\cap C=\varnothing$ provided that~$v\in r'\setminus(r\cup S)$.

We see that the bubble creation~$(\Pi,\epsilon)\mapsto(\Pi',\epsilon')$ is a positive leap over the cavity~$C$, which is an open three-ball.
The roles of the surface~$F$ and the graph~$\Gamma$ in Definition~\ref{cavity-def} are played by~$\widehat D$
and~$\partial\widehat D$, respectively, and the isotopy~$\Psi$ brings~$\widehat D$ to~$\widehat D'$.

The case~$\epsilon(r)=-\delta$ is similar and left to the reader. In this case, the bubble reduction~$(\Pi',\epsilon')\mapsto(\Pi,\epsilon)$
is a positive leap over the same~$C$.
\end{proof}

\begin{exam}
Consider again the construction from Section~\ref{discs-sec}. One can see that the original diagram~$\Pi$
can be obtained from the diagram~$f_1(\Pi)$ by the following sequence of four positive bubble reduction moves (see Figure~\ref{unknot_bubbles-fig}):

$$\begin{aligned}
f_1(\Pi)&\mapsto\Pi'\phantom{''} = \bigl(f_1(\Pi)\setminus\{f_1(r_0), f_1(r_1), f_1(r_2)\}\bigr)\cup\{r_{\mathrm{I}}\},\\
\Pi'&\mapsto\Pi''\phantom' = \bigl(\Pi'\setminus\{r_{\mathrm I}, f_1(r_3), f_1(r_4)\}\bigr)\cup\{r_{\mathrm{II}}\},\\
\Pi''&\mapsto\Pi''' = \bigl(\Pi''\setminus\{r_{\mathrm{II}}, f_1(r_5), f_1(r_6)\}\bigr)\cup\{r_{\mathrm{III}}\},\\
\Pi'''&\mapsto\Pi\phantom{'''} = \bigl(\Pi'''\setminus\{r_{\mathrm{III}}, f_1(r_7),f_1(r_8)\}\bigr)\cup\{r_0\},
\end{aligned}$$
where
$$\begin{aligned}
r_{\mathrm I}&=\left[\frac{1}{16}; \frac{15}{16}\right]\times\left[\frac{15}{16}; \frac{1}{16}\right],\\
r_{\mathrm{II}}&=\left[\frac{15}{16}; \frac{1}{16}\right]\times\left[\frac{9}{16}; \frac{7}{16}\right],\\
r_{\mathrm{III}}&=\left[\frac{5}{8}; \frac{3}{8}\right]\times\left[\frac{7}{16}; \frac{9}{16}\right].
\end{aligned}$$

\end{exam}

\begin{figure}[ht]
    \begin{tabular}{ccccc}
        \includegraphics[scale=4.2]{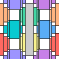}\put(-76,-13){$f_1(\Pi)$}&
       \raisebox{60pt}{$\longrightarrow$}&
        \includegraphics[scale=4.2]{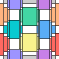}\put(-66,-13){$\Pi'$}&
        \raisebox{60pt}{$\longrightarrow$}&
        \includegraphics[scale=4.2]{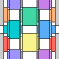}\put(-66,-13){$\Pi''$}\\
        &&&&\rotatebox{90}{$\longleftarrow$}\\
        &&\includegraphics[scale=4.2]{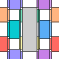}\put(-66,-13){$\Pi$}&
        \raisebox{60pt}{$\longleftarrow$}&
        \includegraphics[scale=4.2]{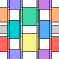}\put(-66,-13){$\Pi'''$}
    \end{tabular}
    \caption{Transforming $f_1(\Pi)$ to $\Pi$}\label{unknot_bubbles-fig}
\end{figure}

\begin{defi}\label{flype_move-def}
    Let $\Pi$ and $\Pi'$ be rectangular diagrams of a quasi-surface, and let $r_1$, $r_2$, $r_3$, $r_4$,
    $r_*$, $r_1'$, $r_2'$, $r_3'$, $r_4'$, $r_*'$ be rectangles such that
    \begin{enumerate}
        \item $\Pi\setminus\Pi' = \{r_1, r_2, r_3, r_4\}$, $\Pi'\setminus\Pi = \{r_1', r_2', r_3', r_4'\}$;
        \item the rectangle $r_1$ overlays~$r_4$;
        \item the rectangle $r_4'$ overlays~$r_1'$;
        \item the rectangles $r_i$ and $r_{i + 1}$ share a vertex for $i = 1, 2, 3$;
        \item the rectangles $r_i'$ and $r_{i + 1}'$ share a vertex for $i = 1, 2, 3$;
        \item $\partial r_* \subset \partial(r_1\cup r_2 \cup r_3 \cup r_4)$, $\partial r_*' \subset \partial(r_1' \cup r_2' \cup r_3' \cup r_4')$;
        \item $r_1\cup r_* \cup r_3 = r_1'\cup r_*'\cup r_3'$, $r_4\cup r_*\cup r_2 = r_4'\cup r_*' \cup r_2'$.
    \end{enumerate}
    Let $\epsilon$ and $\epsilon'$ be orientations of $\Pi$ and $\Pi'$, respectively, such that 
$$\epsilon(r) = \epsilon'(r) \text{ for } r\in \Pi\cap\Pi', \quad\text{and}\quad
        \epsilon(r_i) = \epsilon'(r_i') \text{ for } i=1, 2, 3, 4.$$
    Then we say that $(\Pi', \epsilon')$ is obtained from $(\Pi, \epsilon)$ by a \emph{flype} (see Figure~\ref{flype-fig}). If $\epsilon(r_1) = -1$, then the flype $(\Pi, \epsilon)\mapsto (\Pi', \epsilon')$ is called \emph{positive}.

    \begin{figure}
        \includegraphics{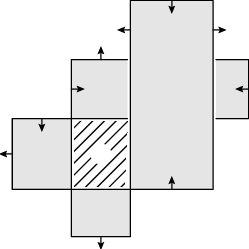}\hskip1cm
        \raisebox{60pt}{$\longrightarrow$}\hskip1cm
        \includegraphics{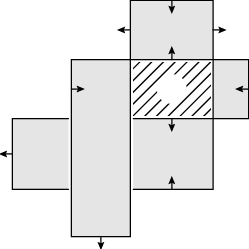}
    \put(-232, 75){$r_1$}
    \put(-267, 75){$r_4$}
    \put(-267, 17){$r_2$}
    \put(-295, 43){$r_3$}
    \put(-268, 44){$r_*$}
    \put(-103, 43){$r_1'$}
    \put(-13, 82){$r_2'$}
    \put(-40, 103){$r_3'$}
    \put(-75, 45){$r_4'$}
    \put(-43, 74){$r_*'$}
        \caption{A positive flype}\label{flype-fig}
    \end{figure}
\end{defi}

\begin{lemm}\label{flype-hol-lem}
    Let~$(\Pi_1,\epsilon_1)\mapsto(\Pi_2,\epsilon_2)$ be a positive flype.
    Then~$\widehat\Pi_2$ is obtained from~$\widehat\Pi_1$ by a positive leap over a cavity
    that is homeomorphic to an open three-ball.
\end{lemm}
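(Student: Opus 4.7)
The plan is to mirror the proof of Lemma~\ref{bubble_cavity-lem}. Let $\Pi$, $\Pi'$, $\epsilon$, $\epsilon'$, $r_1,\ldots,r_4$, $r_1',\ldots,r_4'$, $r_*$, $r_*'$ be as in Definition~\ref{flype_move-def}, and set $D = \{r_1, r_2, r_3, r_4\}$ and $D' = \{r_1', r_2', r_3', r_4'\}$ with their inherited orientations.

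First, I would verify that $D$ and $D'$ are oriented rectangular diagrams of two-discs that share a common boundary in~$\mathbb S^3$. Conditions (2) and (4) guarantee that the four tiles of $D$ glue along their shared vertices and along the rectangular overlap $\widehat{r_1}\cap\widehat{r_4}$ into a disc $\widehat D \cong \mathbb D^2$; a symmetric analysis using (3) and (5) handles~$\widehat{D'}$. Condition (7), which gives $r_1 \cup r_* \cup r_3 = r_1' \cup r_*' \cup r_3'$ and $r_4 \cup r_* \cup r_2 = r_4' \cup r_*' \cup r_2'$, then translates, via the boundary parametrization~\eqref{boundary-param-eq}, into the identity $\partial\widehat D = \partial\widehat{D'}$ on the link circles~$\mathbb S^1_{\tau=0}$ and~$\mathbb S^1_{\tau=1}$.

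Next, I would show that $\widehat D\cup\widehat{D'}$ is an embedded two-sphere bounding an open three-ball~$C$, and that this is the cavity over which the leap occurs. Following the template of Lemma~\ref{bubble_cavity-lem}, I would choose small positive deformations $g$ of $D$ and $g'$ of $D'$ and analyse the positions of $\widehat{g_t(D)}$ and $\widehat{g'_t(D')}$ relative to the region bounded by $\widehat D\cup\widehat{D'}$ for small $t > 0$. Since $\epsilon(r_1) = -1$ and $r_1$ overlays~$r_4$, a small positive push of $\widehat D$ moves $\widehat{r_1}$ in the direction separating it from $\widehat{r_4}$, so $\widehat{g_t(D)}$ invades the enclosed region; symmetrically, the fact that $r_4'$ overlays $r_1'$ combined with $\epsilon'(r_1') = \epsilon(r_1) = -1$ (so $\epsilon'(r_4') = 1$) forces $\widehat{g'_t(D')}$ to retreat from the region. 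This simultaneously shows that $\widehat D$ and $\widehat{D'}$ have disjoint interiors, that $C$ is an open three-ball, and that the leap from $\widehat\Pi_1$ to $\widehat\Pi_2$ is positive in the sense of Definition~\ref{cavity-def}.

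Finally, I would check that for every $r'' \in \Pi_1 \cap \Pi_2$, the tile $\widehat{r''}$ is disjoint from~$C$, by essentially the same reasoning as in Lemma~\ref{bubble_cavity-lem}: compatibility of $r''$ with every rectangle of $D \cup D'$ forces the interior of $\widehat{r''}$ to miss $\widehat D \cup \widehat{D'}$; and since the vertices of $r''$ lie outside the interior of the rectangles $r_1 \cup r_* \cup r_3$ and $r_4 \cup r_* \cup r_2$, the endpoints of $\widehat{r''}$ on $\mathbb S^1_{\tau=0}$ and $\mathbb S^1_{\tau=1}$ lie outside $\overline{C}$, ruling out $\widehat{r''} \subset \overline C$. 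The main obstacle lies in the second step: carefully identifying~$C$ as an open three-ball and matching the sign conventions, since the geometry of a flype is noticeably more involved than that of a bubble move, and the bookkeeping with the four pairs of overlaying/adjacent rectangles has to be carried out on both sides of the cavity in a consistent way.
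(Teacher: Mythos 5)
Your proposal is correct and follows exactly the route the paper intends: the paper's own proof simply says to repeat the argument of Lemma~\ref{bubble_cavity-lem} with $D=\{r_1,r_2,r_3,r_4\}$ and $D'=\{r_1',r_2',r_3',r_4'\}$ and leaves the details to the reader, which is precisely what you have spelled out (including the correct sign bookkeeping $\epsilon(r_1)=-1$, $\epsilon'(r_4')=1$ and the disjointness check for tiles of $\Pi_1\cap\Pi_2$).
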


\begin{proof}
The proof is similar to that of Lemma~\ref{bubble_cavity-lem}, in which one puts~$D=\{r_1,r_2,r_3,r_4\}$
and~$D'=\{r_1',r_2',r_3',r_4'\}$. We leave it to the reader.
\end{proof}

\section{Representing a foliation by a rectangular diagram}\label{foli-sec}
\begin{defi}
Two packs of rectangles~$P$ and~$P'$ are said to be \emph{almost compatible} if they satisfy
all the requirements for compatible packs of rectangles (see Definition~\ref{compat-pack-def})
except that the first rectangle of one pack may not be compatible with the last
rectangle of the other.
\end{defi}

One can see that if~$P$ and~$P'$ are almost compatible packs of rectangles
such that~$r_{\min}(P)$ is incompatible with~$r_{\max}(P')$, then
these rectangles share
a part of the boundary including
at least one non-degenerate arc. In particular, we may have~$r_{\min}(P)=
r_{\max}(P')$, in which case the union of the two packs is also a pack of rectangles.
The rectangles~$r_{\min}(P)$ and~$r_{\max}(P')$ may intersect in a rectangle,
in which case~$P$ and~$P'$ must be both positive or both negative.
Otherwise, the interiors of~$r_{\min}(P)$ and~$r_{\max}(P')$
are disjoint, and then one of the packs~$P$ and~$P'$ is positive,
and the other is negative.
Examples of almost compatible packs of rectangles are shown in Figure~\ref{almost-compat-pack-fig}.
\begin{figure}[ht]
\includegraphics[scale=0.3]{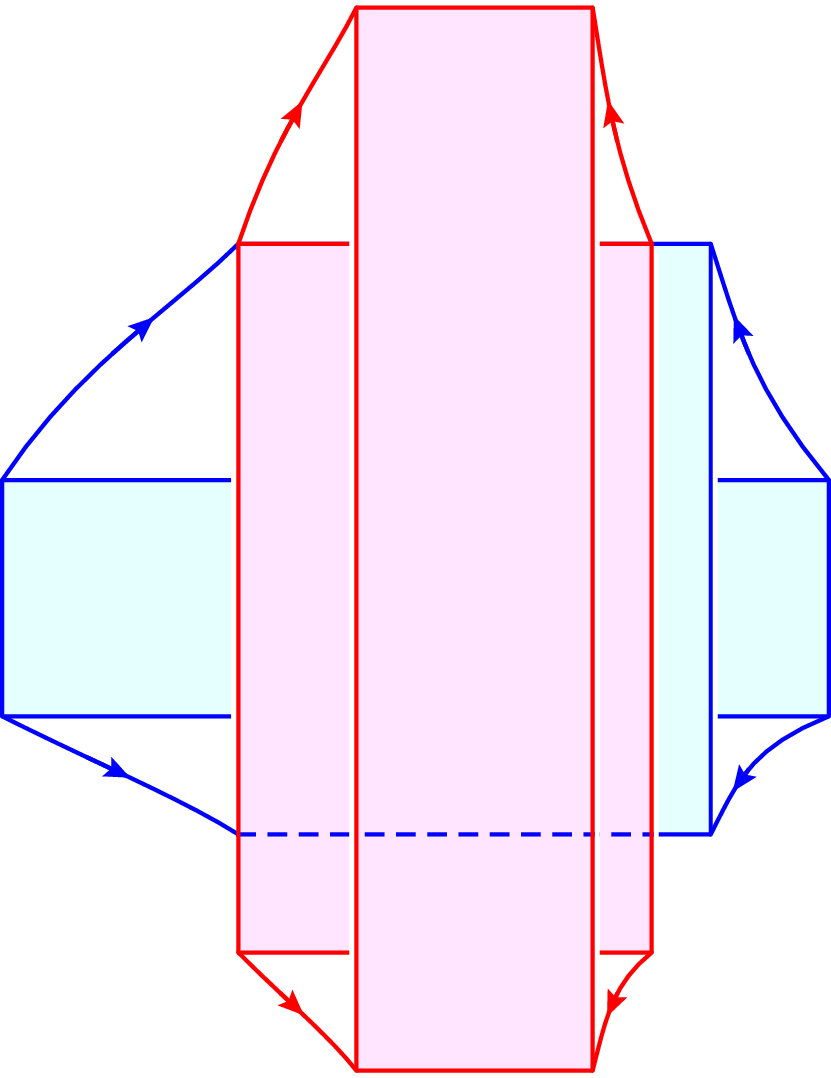}
\hskip1cm
\includegraphics[scale=0.3]{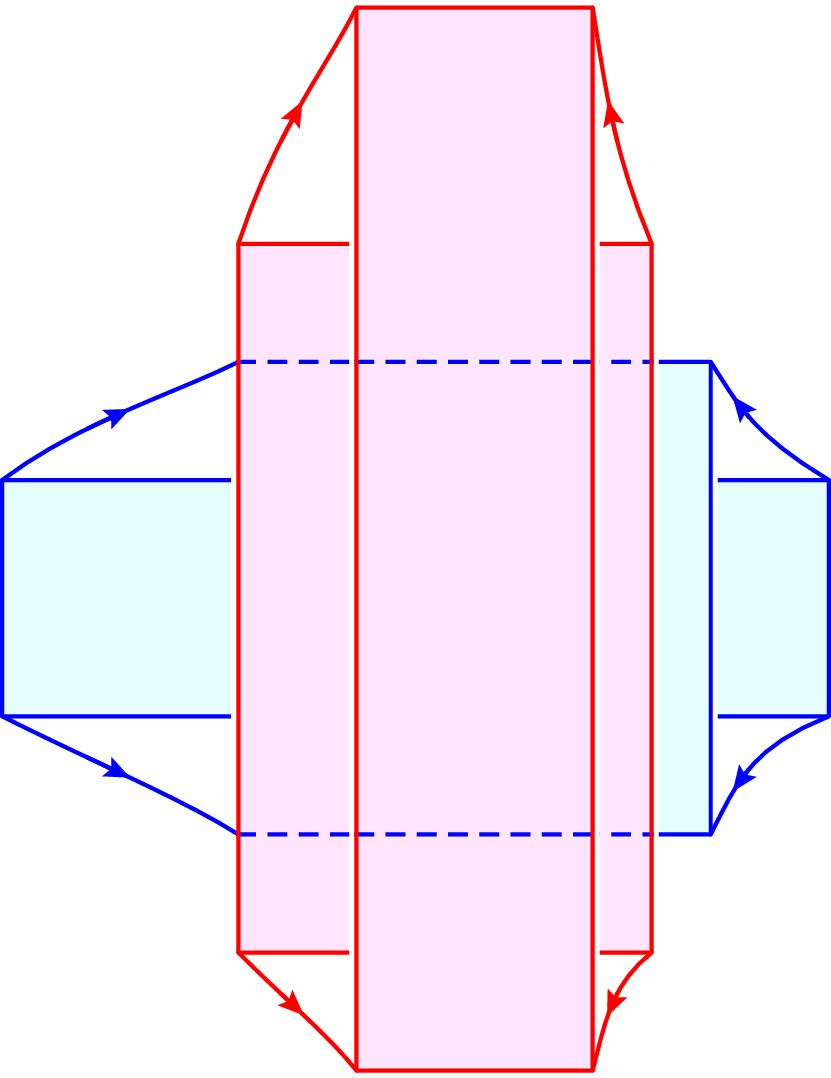}
\hskip1cm
\includegraphics[scale=0.3]{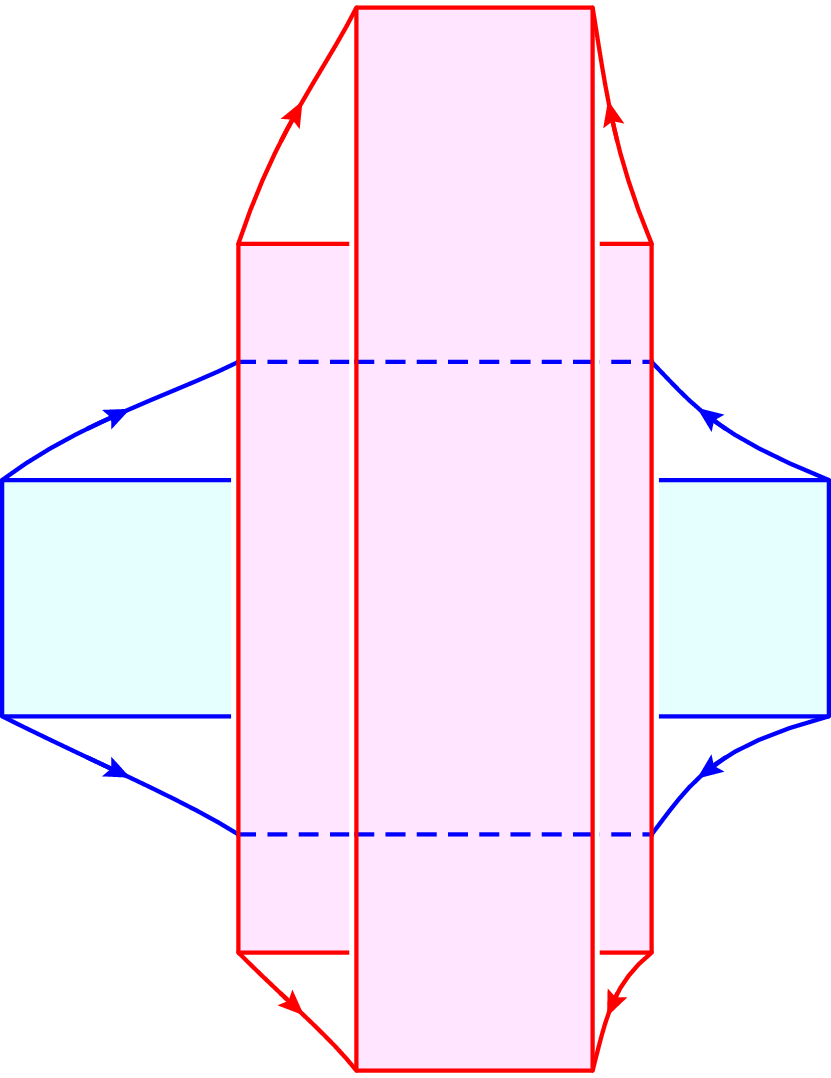}
\vskip1cm

\includegraphics[scale=0.3]{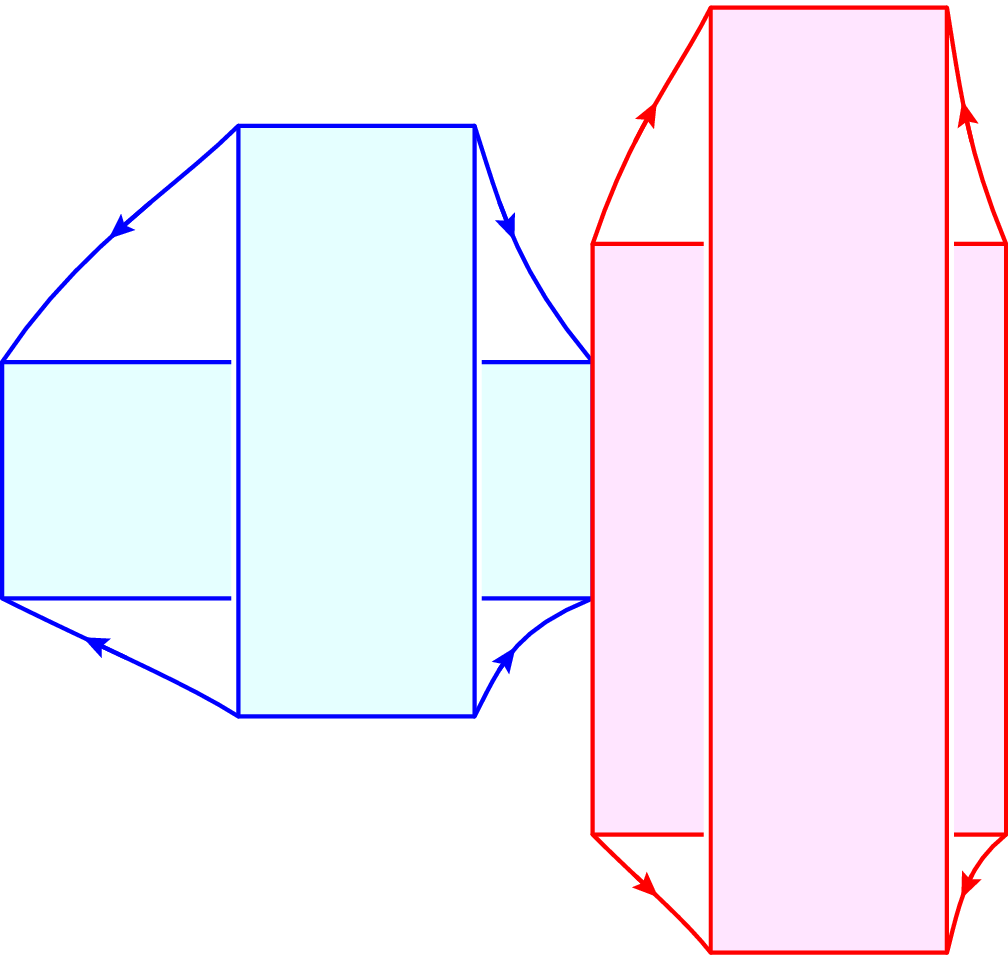}
\hskip1cm
\includegraphics[scale=0.3]{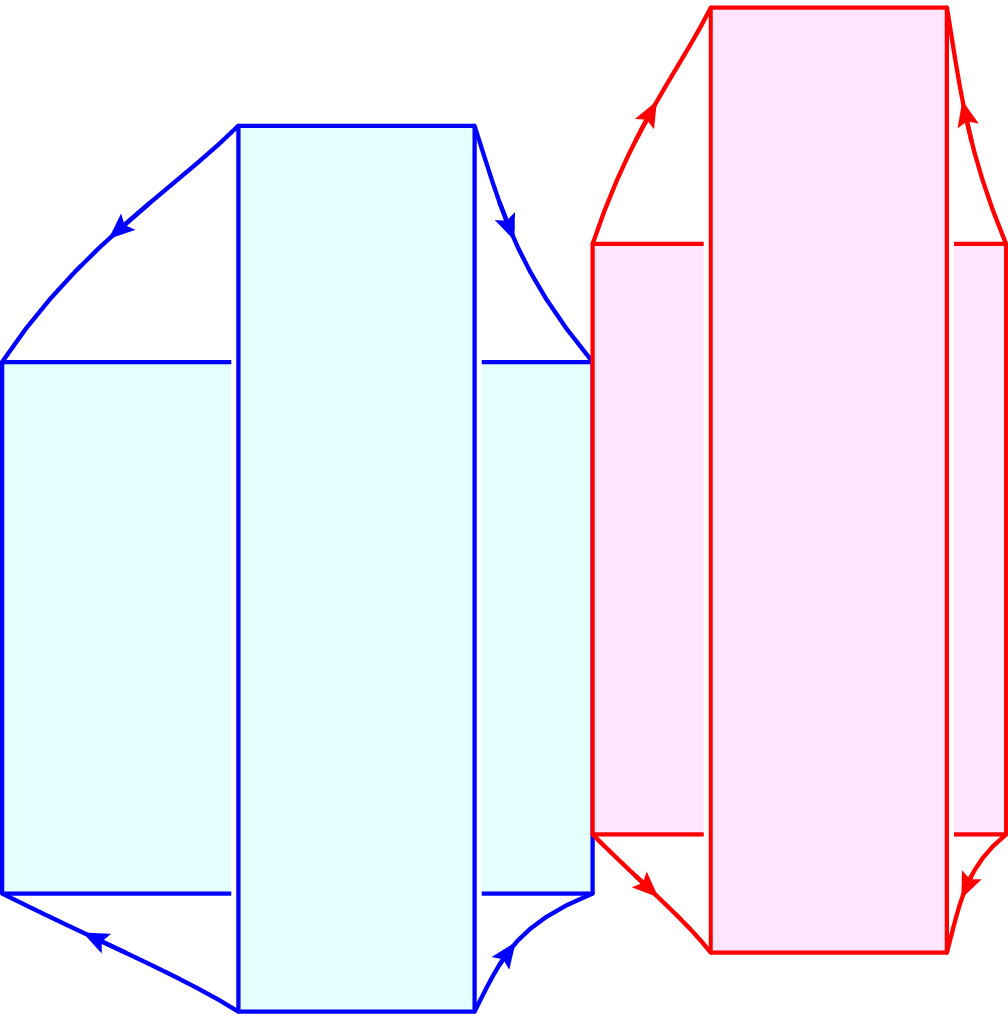}
\caption{Almost compatible packs of rectangles}\label{almost-compat-pack-fig}
\end{figure}

One can also see that if~$\{P_1,\ldots,P_k\}$ is a finite collection of pairwise almost
compatible packs of rectangles, then
$$\{r_{\mathrm{min}}(P_i)\}_{i=1,\ldots,k}\quad\text{and}\quad
\{r_{\mathrm{max}}(P_i)\}_{i=1,\ldots,k}$$
are oriented rectangular diagrams of quasi-surfaces with the orientations of rectangles given by
the orientations of the respective packs.

\begin{defi}
Let~$R$ be a rectangular diagram of a link. By a \emph{rectangular diagram of a foliation
in the complement of~$\widehat R$} we mean a finite collection~$\{P_1,\ldots,P_k\}$
of packs of rectangles such that:
\begin{enumerate}
\item
every two packs~$P_i$, $P_j$, $i\ne j$, are almost compatible;
\item
the rectangular diagram of a quasi-surface~$\{r_{\mathrm{min}}(P_i)\}_{i=1,\ldots,k}$
is obtained from~$\{r_{\mathrm{max}}(P_i)\}_{i=1,\ldots,k}$ by a sequence
of positive flypes and positive bubble moves, followed by the addition
of the diagram of a tube around~$\widehat R$ of the form~$\Omega_\varepsilon(R)$ for some~$\varepsilon>0$;
\item
there is no non-empty rectangular diagram of a surface~$\Pi$ with~$\Pi\subset\{r_{\mathrm{max}}(P_i)\}_{i=1,\ldots,k}$
and~$\partial\Pi=\varnothing$.
\end{enumerate}
When~$\Xi$ is a rectangular diagram of a foliation, we denote by~$\Pi_{\mathrm{min}}(\Xi)$ and~$\Pi_{\mathrm{max}}(\Xi)$
the following collections of rectangles, each of which is a rectangular diagram of a quasi-surface:
$$\Pi_{\mathrm{min}}(\Xi)=\{r_{\mathrm{min}}(P):P\in\Xi\},\quad
\Pi_{\mathrm{max}}(\Xi)=\{r_{\mathrm{max}}(P):P\in\Xi\}.$$
\end{defi}

\begin{prop}\label{catr=cabl-prop}
Let~$R$ be a rectangular diagram of a link, and let~$\Xi$ be a rectangular diagram of a foliation
in the complement of~$\widehat R$. Then
$$\bigcup_{P\in\Xi}\catr P=\bigcup_{P\in\Xi}\cabl P,\quad
\bigcup_{P\in\Xi}\catl P=\bigcup_{P\in\Xi}\cabr P.$$
Both these subsets of~$\mathbb T^2$ are finite unions of pairwise disjoint sloped arcs.
\end{prop}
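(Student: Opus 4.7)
The plan is to prove both equalities by interpreting each corner arc as the $\mathbb T^2$-shadow of a lateral $2$-face of the curved cube $\widehat P\subset\mathbb S^3$ associated with the pack $P$, and then matching these lateral faces pairwise using conditions~(2) and~(3) of the definition of a rectangular diagram of a foliation.

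First I verify the shape of the arcs. By the monotonicity requirements in Proposition~\ref{octagon-prop} and Definition~\ref{pack-rect-def}, $\cabl P$ and $\catr P$ are graphs of strictly decreasing functions $\varphi=\varphi(\theta)$, whereas $\cabr P$ and $\catl P$ are graphs of strictly increasing ones; reversing a pack's ordering flips each arc's orientation but not its underlying set. Almost compatibility (built into the definition of~$\Xi$) forces that whenever two corner arcs meet they do so only at endpoints and concatenate into a single sloped arc with coherent orientation. Finiteness of $\Xi$ then implies each of the four unions $\bigcup\catr P,\ \bigcup\cabl P,\ \bigcup\cabr P,\ \bigcup\catl P$ decomposes into a disjoint union of finitely many maximal sloped arcs, which gives the ``finite union of pairwise disjoint sloped arcs'' clause once the equalities are established.

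For the main equalities, each pack $P$ determines a curved cube $\widehat P\subset\mathbb S^3$, the homeomorphic image of $[0;1]^3$ under the map~(\ref{cube-map-eq}) from Proposition~\ref{octagon-prop}. Its six faces are the top and bottom discs $\widehat{r_{\max}(P)},\widehat{r_{\min}(P)}$ at $t=0,1$, and four lateral $2$-faces at $x=0,1$ and $y=0,1$. Using formulas~(\ref{boundary-param-eq}), the face $\{x=0\}$ is exactly $\bigcup_{v\in\cabl P}\widehat v$, and analogously the faces $\{x=1\}, \{y=0\}, \{y=1\}$ are unions indexed by $\catr P,\cabr P,\catl P$. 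Since $\widehat v$ depends only on $v$, each lateral face is uniquely determined by, and uniquely determines, its corresponding corner arc as a subset of~$\mathbb T^2$. I then establish $\bigcup\catr P=\bigcup\cabl P$ by matching $\{x=1\}$ faces with $\{x=0\}$ faces of other cubes. By condition~(2) and Lemmas~\ref{bubble_cavity-lem},~\ref{flype-hol-lem},~\ref{h1...hk-lem}, the quasi-surface $\widehat{\Pi_{\max}(\Xi)}$ differs from $\widehat{\Pi_{\min}(\Xi)}\setminus\widehat{\Omega_\varepsilon(R)}$ only through positive leaps over cavities that sit between these two surfaces, so each cavity abuts cubes only along their top/bottom discs and never along lateral faces. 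Consequently the lateral faces of the cubes in $\{\widehat P\}_{P\in\Xi}$ fit together flush, and the coorientation of each $\mathscr F(P)$ forces any two cubes meeting along a lateral region to approach it from opposite sides with respect to the local $x$-coordinate, pairing an $\{x=1\}$ sub-face to an $\{x=0\}$ sub-face rather than $\{x=1\}$ to $\{x=1\}$. Condition~(3) rules out closed sub-arrangements that could evade this pairing. Translating the $\mathbb S^3$-level matching back to $\mathbb T^2$ via the face-arc correspondence yields $\bigcup\catr P=\bigcup\cabl P$, and the symmetric argument in the $y$-direction gives $\bigcup\catl P=\bigcup\cabr P$.

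The main technical obstacle is the ``partial matching'' phenomenon: two cubes typically share only a sub-region of a lateral face rather than the whole face, which in $\mathbb T^2$ corresponds to a sub-arc of a corner arc. Thus the pairing is not between whole corner arcs $\catr P$ and $\cabl P'$ but between sub-arcs induced by the combinatorics of all rectangles in $\bigcup_{P\in\Xi}P$ acting on each other simultaneously. Tracking these subdivisions, and in each case verifying that a right-side sub-face is paired with left-side sub-faces of (possibly several) other cubes rather than with another right-side sub-face, requires a careful combinatorial analysis of how flype and bubble moves propagate through corner arcs together with the compatibility constraints on all participating rectangles.
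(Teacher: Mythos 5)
Your approach --- reading each corner arc as the shadow of a lateral face of the curved cube $\widehat P$ and matching those faces inside $\mathbb S^3$ --- is genuinely different from the paper's, which argues entirely in $\mathbb T^2$, but as written it has gaps that are not cosmetic. First, the assertion that the cavities produced by the flypes and bubble moves ``abut cubes only along their top/bottom discs and never along lateral faces,'' so that the lateral faces fit together flush, is essentially the content of Proposition~\ref{C-N-empty-prop}; in the paper that proposition is proved \emph{after}, and \emph{using}, Proposition~\ref{catr=cabl-prop} (its multiplicity-one argument for the three-chain~\eqref{3chain-eq} invokes it explicitly). Lemmas~\ref{bubble_cavity-lem} and~\ref{flype-hol-lem} only produce a cavity for each individual move; they say nothing about where the intermediate surfaces $\widehat\Pi_i$ and their cavities sit relative to the lateral faces of the cubes $\widehat P$, so you cannot yet conclude that every $\{x=1\}$ face is covered by $\{x=0\}$ faces --- there is a real risk of circularity here. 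Second, your appeal to condition~(3) to ``rule out closed sub-arrangements'' does not do the intended work: that condition concerns closed sub-surfaces of $\Pi_{\mathrm{max}}(\Xi)$, not closed chains of corner arcs. A closed curve assembled from negatively sloped arcs is essential in $\mathbb T^2$, and the paper excludes it by a direction argument at the corners of the tube diagram $\Omega_\varepsilon(R)$: all its rectangles are \emph{first} rectangles of packs, so no corner arc can pass into the tube region. That ingredient is missing from your argument.

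Third, and most importantly, the ``partial matching'' step that you correctly identify as the main obstacle is exactly where the proof lives, and you leave it unproved. The paper resolves it combinatorially: an endpoint $v$ of a component of $\bigcup_{P}\cabl P$ satisfies $v=\cabl r_{\mathrm{max}}(P)$, and since flypes and bubble moves preserve the boundary of the diagrams involved and $\partial\Omega_\varepsilon(R)=\varnothing$, one gets $\partial\Pi_{\mathrm{max}}(\Xi)=\partial\Pi_{\mathrm{min}}(\Xi)\not\ni v$; hence $v$ is not a free vertex of $\Pi_{\mathrm{max}}(\Xi)$ and must equal $\catr r$ for another $r\in\Pi_{\mathrm{max}}(\Xi)$. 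Matching endpoints and starting points of the components of $\bigcup_P\cabl P$ and $\bigcup_P\catr P$ then forces the two unions of sloped arcs to coincide. Nothing in your proposal plays the role of this boundary-preservation observation: the coorientation argument you sketch would at best constrain faces that already touch, and does not show that every point of a $\catr$ arc is actually met by some $\cabl$ arc. Until you supply substitutes for these three ingredients, what you have is a plan rather than a proof.
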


\begin{proof}
Let~$\varepsilon>0$ be such that~$\Pi_{\min}(\Xi)$ contains the diagram~$\Omega_\varepsilon(R)$.

It follows from the definition of (almost) compatibility of packs of rectangles that
each connected component of~$\bigcup_{P\in\Xi}\cabl P$ is either a sloped arc or a union of sloped arcs that forms a
simple closed curve. The latter case, however, is impossible for~$\bigcup_{P\in\Xi}\cabl P$
cannot cross the boundary of the region
\begin{equation}\label{domain-tube-eq}
\bigcup_{(\theta,\varphi)\in R}\left([\theta-\varepsilon;\theta+\varepsilon]\times\mathbb S^1\cup
\mathbb S^1\times[\varphi-\varepsilon;\varphi+\varepsilon]\right)\subset\mathbb T^2.
\end{equation}
Indeed, $\bigcup_{P\in\Xi}\cabl P$ cannot intersect the interior of the rectangles
constituting~$\Omega_\varepsilon(R)$, and can only pass through their corners.
But since all rectangles in~$\Omega_\varepsilon(R)$ are the first rectangles
of some packs from~$\Xi$, no connected component of~$\bigcup_{P\in\Xi}\cabl P$ can pass through their corners
in the inward direction with respect to the domain~\eqref{domain-tube-eq}.

Thus, every connected component of~$\bigcup_{P\in\Xi}\cabl P$
is a sloped arc. By symmetry, the same holds for~$\bigcup_{P\in\Xi}\cabr P$,
$\bigcup_{P\in\Xi}\catl P$, and~$\bigcup_{P\in\Xi}\catr P$.

Let~$\alpha$ be a connected component of~$\bigcup_{P\in\Xi}\cabl P$, and let~$v$
be its endpoint. This means that~$v=\cabl r_{\mathrm{max}}(P)$ for some~$P\in\Xi$,
but~$v\notin\{\cabl r:r\in\Pi_{\mathrm{min}}(\Xi)\}$. Bubble moves and flypes preserve
the boundary of the involved rectangular diagrams of quasi-surfaces, and we have~$\partial\Omega_\varepsilon(R)=\varnothing$. Therefore,
$$\partial\Pi_{\mathrm{min}}(\Xi)=\partial\Pi_{\mathrm{max}}(\Xi)\not\ni v.$$
This means that~$v\in\{\catr r:r\in\Pi_{\mathrm{max}}(\Xi)\}$
and~$v\notin\{\catr r:r\in\Pi_{\mathrm{min}}(\Xi)\}$.

Thus, $v$ is the endpoint of a connected component of~$\bigcup_{P\in\Xi}\catr P$.
Similarly, the endpoint of any connected component of~$\bigcup_{P\in\Xi}\catr P$
is the endpoint of a connected component of~$\bigcup_{P\in\Xi}\cabl P$.
The same is true about the starting points. This implies that~$\bigcup_{P\in\Xi}\cabl P=\bigcup_{P\in\Xi}\catr P$.

The second equality is proved similarly.
\end{proof}

\begin{prop}\label{C-N-empty-prop}
Let~$R$ be a rectangular diagram of a link, and let~$\Xi$ be a rectangular diagram of a foliation
in the complement of~$\widehat R$. Then there is a cavity~$C$ and a positive~$\varepsilon$
such that~$C\cap N_\varepsilon(R)=\varnothing$, $C\cup N_\varepsilon(R)=\mathbb S^3\setminus\bigcup_{P\in\Xi}\widehat P$, and~$\partial(C\cup N_\varepsilon(R))=\widehat\Pi_{\mathrm{min}}(\Xi)\cup\widehat\Pi_{\mathrm{max}}(\Xi)$.
Such~$C$ is clearly unique.
\end{prop}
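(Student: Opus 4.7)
The strategy is to unpack the definition of a rectangular diagram of a foliation and to glue the local cavities produced by Lemmas \ref{bubble_cavity-lem} and \ref{flype-hol-lem} into the single cavity $C$ using Lemma \ref{h1...hk-lem}. By definition, there are $\varepsilon>0$ and a sequence of positive flypes and positive bubble moves
$$
(\Pi_{\max}(\Xi),\epsilon_{\max})=(\Pi_0,\epsilon_0)\mapsto(\Pi_1,\epsilon_1)\mapsto\cdots\mapsto(\Pi_m,\epsilon_m)
$$
such that $\Pi_{\min}(\Xi)=\Pi_m\cup\Omega_\varepsilon(R)$. The two cavity lemmas give, for each step $i=1,\ldots,m$, a positive leap of $\widehat\Pi_{i-1}$ over an open three-ball cavity $C_i$ yielding $\widehat\Pi_i$. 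I would first apply Lemma \ref{h1...hk-lem}, componentwise if necessary (since $\widehat\Pi_0$ need not be connected), to merge the $C_i$ into a single cavity $C'$ over which $\widehat\Pi_m$ is obtained from $\widehat\Pi_0$ by a positive leap.

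To verify the hypotheses of Lemma \ref{h1...hk-lem}, I would argue that (a) each bubble or flype move only modifies a small, local collection of rectangles, so any rectangle never touched through steps $1,\ldots,j$ remains in every $\Pi_k$ for $k\leqslant j$; this gives $\widehat\Pi_i\cap\widehat\Pi_0\subseteq\widehat\Pi_j$ for $j<i$. And (b), each cavity $C_i$ lies strictly on the positive cooriented side of $\widehat\Pi_{i-1}$; by induction it therefore lies on the positive side of $\widehat\Pi_0$ and so is disjoint from $\widehat\Pi_0$. Next, by Proposition \ref{tube-prop}, $\widehat\Omega_\varepsilon(R)=\partial N_\varepsilon(R)$; since $\overline{C'}$ is compact and disjoint from $\widehat R$, shrinking $\varepsilon$ if necessary secures $C'\cap N_\varepsilon(R)=\varnothing$. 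Setting $C=C'$, the identity $\partial(C\cup N_\varepsilon(R))=\widehat\Pi_{\min}(\Xi)\cup\widehat\Pi_{\max}(\Xi)$ then follows from $\partial C=\widehat\Pi_0\cup\widehat\Pi_m$ and $\partial N_\varepsilon(R)=\widehat\Omega_\varepsilon(R)$, together with $\widehat\Pi_{\min}(\Xi)=\widehat\Pi_m\cup\widehat\Omega_\varepsilon(R)$ and $\widehat\Pi_{\max}(\Xi)=\widehat\Pi_0$.

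The final and most substantive task is to establish $C\cup N_\varepsilon(R)=\mathbb S^3\setminus\bigcup_{P\in\Xi}\widehat P$. The inclusion $\subseteq$ follows from the leap description: each pack $\widehat P$ lies on the side of $\widehat\Pi_0\cup\widehat\Pi_m$ opposite to $C$, and the tube interior $N_\varepsilon(R)$ is separated from the packs by $\widehat\Omega_\varepsilon(R)\subseteq\widehat\Pi_{\min}(\Xi)$. For $\supseteq$, I would invoke Proposition \ref{catr=cabl-prop}: the matchings $\bigcup_P\catr P=\bigcup_P\cabl P$ and $\bigcup_P\catl P=\bigcup_P\cabr P$ guarantee that the side faces of distinct packs pair up, so $\bigcup_{P\in\Xi}\widehat P$ is a compact three-dimensional region whose topological boundary in $\mathbb S^3$ is exactly $\widehat\Pi_{\min}(\Xi)\cup\widehat\Pi_{\max}(\Xi)$. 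Since $C\cup N_\varepsilon(R)$ and $\mathbb S^3\setminus\bigcup_{P\in\Xi}\widehat P$ are then both open subsets of $\mathbb S^3$ with the same topological boundary and the first is contained in the second, equality follows by a connected-components argument (with $N_\varepsilon(R)$ identifying the component adjacent to $\widehat R$ through $\widehat\Omega_\varepsilon(R)$ and $C$ accounting for all remaining components). I anticipate that the principal obstacle will be the careful bookkeeping needed to verify condition (b) above — in particular, ensuring that no cavity produced by a later move can extend back to a still-present portion of $\widehat\Pi_0$ — as well as the legitimate use of Lemma \ref{h1...hk-lem} in the disconnected setting.
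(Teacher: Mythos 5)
Your overall skeleton matches the paper's: decompose the passage from $\Pi_{\max}(\Xi)$ to $\Pi_{\min}(\Xi)$ into positive flypes and bubble moves, get a ball cavity $C_i$ for each step from Lemmas~\ref{bubble_cavity-lem} and~\ref{flype-hol-lem}, and merge them via Lemma~\ref{h1...hk-lem}. However, the step you yourself flag as the ``principal obstacle'' is in fact the entire mathematical content of the proposition, and your proposed argument for it does not work. The claim that ``each cavity $C_i$ lies strictly on the positive cooriented side of $\widehat\Pi_{i-1}$; by induction it therefore lies on the positive side of $\widehat\Pi_0$ and so is disjoint from $\widehat\Pi_0$'' is not a valid induction: ``the positive side of $\widehat\Pi_0$'' is a whole complementary region of $\mathbb S^3$, and a long enough sequence of positive leaps can in principle sweep through that entire region and return to abut $\widehat\Pi_0$ (or an earlier cavity $C_j$) from the other side. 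Nothing local prevents this; what prevents it is the global constraint that the curved cubes $\widehat P_j$ and the tube $N_\varepsilon(R)$ must also fit into $\mathbb S^3$ alongside the cavities. The paper captures exactly this with a degree argument: the union $\overline N_\varepsilon(R)\cup\bigcup_i\overline C_i\cup\bigcup_j\widehat P_j$ is a three-chain with trivial boundary, hence homologous to $l\cdot[\mathbb S^3]$; any overlap among the $C_i$ or with $N_\varepsilon(R)$ would force $l>1$; and $l=1$ is verified by exhibiting a point of multiplicity one, namely a point near an arc $\widehat v$ with $v\in\partial\Pi_{\max}(\Xi)$, where Proposition~\ref{catr=cabl-prop} supplies a pack $P$ whose interior meets a neighborhood of $\widehat v$ while all the cavities avoid it. Without some argument of this global type your proof does not go through.

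Two smaller points. First, you cannot ``shrink $\varepsilon$ if necessary'': the value of $\varepsilon$ is dictated by $\Xi$ itself, since $\Omega_\varepsilon(R)$ is the tube diagram contained in $\Pi_{\min}(\Xi)$, and the boundary identity you must prove refers to that specific $\varepsilon$; disjointness of the cavities from $N_\varepsilon(R)$ is part of what the degree argument establishes, not something obtainable from compactness of $\overline{C'}$ and disjointness from $\widehat R$ alone. Second, your verification of hypothesis $F_i\cap F_0\subset F_j$ for Lemma~\ref{h1...hk-lem} by ``locality of the moves'' has the same hidden global issue (a positively pushed piece of surface could in principle return to a point of $\widehat\Pi_0$); this too is resolved only once the multiplicity-one statement is in hand.
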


\begin{proof}
The statement is non-trivial only when~$\Pi_{\max}(\Xi)\ne\varnothing$, which is assumed below. This also
implies~$\partial\Pi_{\max}(\Xi)\ne\varnothing$.

By the definition of a rectangular diagram of a foliation, there exists a sequence of positive bubble moves and flypes
$$\Pi_{\max}(\Xi)=\Pi_0\mapsto\Pi_1\mapsto\ldots\mapsto\Pi_m$$
such that~$\Pi_{\min}(\Xi)=\Pi_m\cup\Omega_\varepsilon(R)$.
It follows from Lemmas~\ref{bubble_cavity-lem} and~\ref{flype-hol-lem} that, for each~$i=1,2,\ldots,m$,
the transformation~$\widehat\Pi_{i-1}\mapsto\widehat\Pi_i$ is a positive leap over a cavity, which we denote by~$C_i$.
We claim that the cavities~$C_i$, $i=1,\ldots,m$, and the solid torus~$N_\varepsilon(R)$
are pairwise disjoint.

Indeed, the union
\begin{equation}\label{3chain-eq}
\overline N_\varepsilon(R)\cup\bigcup_{i=1}^m\overline C_i\cup\bigcup_{j=1}^k\widehat P_j
\end{equation}
represents a three-chain with trivial boundary, so its homology class has the form~$l\cdot[\mathbb S^3]$.
A non-empty intersection of~$C_i$ with~$C_j$, where~$j\ne i$, or with~$N_\varepsilon(R)$
would imply~$l>1$.

Let~$v$ be a point in~$\partial\Pi_{\max}(\Xi)$. Due to symmetry, we may assume that~$v=\catr r_0$
for some~$r_0\in\Pi_{\max}(\Xi)$. Then each of~$\Pi_i$, $i=1,2,\ldots,m$, contains a rectangle~$r_i$
with~$\catr r_i=v$. It follows from Proposition~\ref{catr=cabl-prop} that there is a pack of rectangles~$P\in\Xi$
such that~$v\in\cabl(P)$, and~$\{\cabl r_{\min}(P),\cabl r_{\max}(P)\}\not\ni v$. All this implies that there is a
neighborhood~$U$ of~$\widehat v$ such that the cavities~$C_i$,
$i=1,\ldots,m$ are disjoint from the intersection~$U\cap\widehat P$. Therefore, this intersection participates
in the three-chain~\eqref{3chain-eq} with multiplicity one, hence $l=1$.
\end{proof}

\begin{defi}
Let~$\Xi$ be a rectangular diagram of a foliation in the complement of~$\widehat R$,
where~$R$ is a rectangular diagram of a link, and let~$C$ and~$\varepsilon$ be as in Proposition~\ref{C-N-empty-prop}.
Choose a deflating map~$\mathfrak d_C:\mathbb S^3\setminus C\rightarrow\mathbb S^3$ so
that it is identical on~$N_\varepsilon(R)$.

Propositions~\ref{thickened_surface-prop} and~\ref{C-N-empty-prop}, together with the discussion in
Section~\ref{pack-sec}, imply that the images of the foliations~$\mathscr F(P)$, $P\in\Xi$,
under the deflating map~$\mathfrak d_C$ compile into a well defined foliation on~$\mathbb S^3\setminus N_\varepsilon(R)$.
This foliation will be said to be \emph{associated with~$\Xi$} and denoted by~$\widehat\Xi$.
\end{defi}

Recall (see~\cite{gabai83}) that a compact leaf of a codimension-one foliation is said to be \emph{of depth~$0$},
and then, inductively, a leaf~$F$ is \emph{of depth~$k$}, $k\in\mathbb N$, if~$\overline F\setminus F$
consists of leaves of depth~$<k$. A \emph{finite depth foliation} is a codimension-one foliation~$\mathscr F$
for which there is~$n\in\mathbb N$ such that every leaf of~$\mathscr F$ has a depth not exceeding~$n$.
The smallest such~$n$ is then called the \emph{depth} of~$\mathscr F$.

Gabai proves in~\cite{gabai83} that any properly embedded oriented surface~$F$ in a compact irreducible oriented
$3$-manifold~$M$ can be included as a leaf into a co-oriented finite depth taut codimension-one foliation on~$M$
provided that~$F$ minimizes the Thurston norm. This means, in particular, that, for any non-split link~$L\subset\mathbb S^3$
and any minimal genus Seifert surface~$F$ for~$L$, the intersection~$F\cap M$ can be a leaf of such a foliation
on~$M=\mathbb S^3\setminus N(L)$, where~$N(L)$ is an open tubular neighborhood of~$L$.

The foliations considered by Gabai are assumed to be transverse to the boundary~$\partial M$, which is a union of tori,
and such a foliation is said to be \emph{taut} if there is a closed curve in~$M$ intersecting all leaves
of the foliation transversely.

Clearly, if a foliation~$\mathscr F$ on~$M$ is transverse to~$\partial M$, then it can be modified
near~$\partial M$ to become tangent to~$\partial M$, that is to say, to have each component of~$\partial M$
as a leaf. If~$\mathscr F$ is taut, then, after making it tangent to~$\partial M$, it remains taut in
the following sense.

\begin{defi}
Let~$\mathscr F$ be a codimension-one foliation on a compact $3$-manifold~$M$ such that~$\mathscr F$
is tangent to~$\partial M$. $\mathscr F$ is said to be \emph{taut} if there is a closed curve in~$M$
intersecting all leaves contained in~$M\setminus\partial M$ transversely.
\end{defi}

If a foliation~$\mathscr F$ is tangent to~$\partial M$ and taut in the sense of this definition,
then a taut foliation transverse to the boundary can be obtained by removing an open collar neighborhood of~$\partial M$.
So, there is no essential difference between $\partial M$-transverse and $\partial M$-tangent settings
in the context of taut foliations. (However, the leaves of depth~$k$ with non-empty boundary in the $\partial M$-transverse settings
become depth-$(k+1)$ ones in the $\partial M$-tangent settings, and the boundary components become depth~$0$ leaves.)

For the reasons mentioned in Section~\ref{discs-sec}, in the context of rectangular diagrams, it is more natural to
work in the $\partial M$-tangent settings.
The following statement is the main result of the present paper.

\begin{theo}\label{diagram-exist-thm}
Let~$R$ be a rectangular diagram of a link, and let~$\varepsilon>0$ be such that the tube diagram~$\Omega_\varepsilon(R)$
is defined.
Let also~$\mathscr F$ be a co-orientable finite depth taut codimension-one foliation on~$\mathbb S^3\setminus N_\varepsilon(R)$
such that each connected component of~$\widehat\Omega_\varepsilon(R)$ is a leaf of~$\mathscr F$ co-oriented outward~$N_\varepsilon(R)$.
Then there exists a rectangular diagram~$\Xi$ of a foliation in the complement of~$\widehat R$
such that~$\widehat\Xi$ is isotopic to~$\mathscr F$ relative to~$\Omega_\varepsilon(R)$.
\end{theo}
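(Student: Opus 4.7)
The plan is to argue by induction on the depth of $\mathscr F$, using the compact (depth-$0$) leaves as scaffolding and filling the open strata between them by packs of rectangles, so that the pieces assemble into a rectangular diagram of a foliation in the sense of the definition just preceding the theorem.

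\textbf{Normalization of compact leaves.} A finite depth taut foliation has only finitely many depth-$0$ leaves. For each of them I would apply the earlier theory of rectangular diagrams of (quasi-)surfaces to find an ambient isotopy (fixed on a neighborhood of $\widehat\Omega_\varepsilon(R)$) that simultaneously presents every depth-$0$ leaf as $\widehat\Pi_i$ for some oriented rectangular diagram $\Pi_i$ with orientation induced from the co-orientation of $\mathscr F$. The union of these $\Pi_i$ together with $\Omega_\varepsilon(R)$ will be the eventual $\Pi_{\min}(\Xi)$.

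\textbf{Packs for open strata.} Cutting $\mathbb S^3\setminus N_\varepsilon(R)$ along the depth-$0$ leaves produces finitely many compact pieces $M_\alpha$, each foliated by a finite depth taut foliation of strictly smaller depth, in which the boundary components are leaves. By induction on depth, a piece whose interior foliation is a product $F\times[0;1]$ is realized by a positive deformation of the boundary diagram: Proposition~\ref{thickened_surface-prop} turns the deformation into an embedding of $\widehat F\times[0;1]$, and the family of rectangles traced out by the corners is exactly a pack of rectangles whose $\mathscr F(P)$ (see Section~\ref{pack-sec}) models the product fibration, the corner arcs recording the holonomy along $\mathbb S^1_{\tau=0}$ and $\mathbb S^1_{\tau=1}$. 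For a piece with positive-depth interior leaves spiraling onto a boundary leaf, the construction is modeled on Section~\ref{discs-sec}: one produces a countable rectangular diagram analogous to $\Pi=\{r_0,r_1,r_2,\ldots\}$ whose rectangles accumulate on the boundary diagram, then takes a positive deformation $f$ realizing the holonomy shift, and packages everything into a pack of rectangles whose first and last rectangles are read off from neighboring strata. Compatibility of packs across strata is ensured by using a common choice of diagram for each depth-$0$ leaf.

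\textbf{Assembling $\Xi$ and resolving discrepancies.} The union $\Xi$ of all packs so constructed gives two diagrams $\Pi_{\min}(\Xi)$ and $\Pi_{\max}(\Xi)$. The toy example of Section~\ref{discs-sec} shows that $\Pi_{\min}(\Xi)$ and $\Pi_{\max}(\Xi)$ typically fail to coincide even after excising the tube; the discrepancy at each compact leaf is an open three-ball through which a positive leap of quasi-surfaces must happen. I would arrange the construction so that each such leap is realized by a positive bubble move or a positive flype, using Lemmas~\ref{bubble_cavity-lem} and~\ref{flype-hol-lem}, and then combine them into a single sequence via Lemma~\ref{h1...hk-lem}. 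The co-orientation of $\mathscr F$ and the direction of spiraling at positive-depth leaves force the leaps, and hence the moves, to be positive. Condition~(3) in the definition of a rectangular diagram of a foliation—no closed subdiagram in $\Pi_{\max}(\Xi)$—follows from tautness: a closed component of $\widehat\Pi_{\max}(\Xi)$ would be a closed leaf of $\widehat\Xi$ missed by every transversal, contradicting tautness of $\mathscr F$ through the isotopy.

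\textbf{Main obstacle.} The delicate step is the simultaneous normalization of a whole family of non-compact leaves approaching a compact leaf with non-trivial holonomy: the individual rectangular diagrams must not only be chosen compatibly stratum by stratum, but must limit onto the diagram of the compact leaf in such a way that the interpolating family is a positive deformation, and the resulting infinite collection of rectangles satisfies the discreteness and local finiteness conditions in Definition~\ref{def:diagram_quasi-surface}. Coupled with the bookkeeping needed to convert each local mismatch into a concrete sequence of positive bubble moves and flypes, this is where the bulk of the work in Section~\ref{normal-sec} should lie; the remaining verifications that $\widehat\Xi$ is isotopic to $\mathscr F$ rel $\Omega_\varepsilon(R)$ are formal consequences of Propositions~\ref{thickened_surface-prop} and~\ref{C-N-empty-prop} once the cavity structure of $\mathbb S^3\setminus\bigcup_{P\in\Xi}\widehat P$ has been identified with the union of the three-balls produced by the positive moves.
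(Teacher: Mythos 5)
Your outline correctly identifies the overall architecture (normalize distinguished leaves, fill the complementary product regions by packs, reconcile $\Pi_{\min}$ and $\Pi_{\max}$ by positive moves, deflate cavities), but it passes over the step that carries essentially all of the difficulty, and at that step the proposal as written would fail. The problem is the assertion that a complementary region $M\cong F\times(0;1)$ between two already-normalized leaves ``is realized by a positive deformation of the boundary diagram,'' with the residual discrepancy ``arranged'' to be a positive bubble move or flype. Two rectangular-diagram surfaces that cobound a product region are normally parallel as embedded surfaces, but their diagrams are in general combinatorially unrelated: there is no a priori positive deformation carrying one to the other, and no way to ``arrange the construction'' so that the mismatch is a single three-ball handled by Lemma~\ref{bubble_cavity-lem} or~\ref{flype-hol-lem}. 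Proving that $\Pi_1$ is obtained from $\Pi_0$ by a sequence of \emph{positive} flypes and bubble moves is exactly what requires the normal-surface machinery of Section~\ref{normal-sec}: normalization steps (generalized wrinkle reductions and compressions, Lemmas~\ref{wrinkle-decomp-lem} and~\ref{gen-compres-decomp-lem}), the sign control of Lemma~\ref{only-positive-lem}, the elimination of compressions via incompressibility (Lemma~\ref{no-compres-lem}), and---crucially---the almost-normal-surface/thin-position argument of Rubinstein--Thompson--Gabai (Proposition~\ref{almost-norm-rect-prop} and the case analysis according to whether $M$ contains a vertex of $T(X_k,Y_k)$, an almost normal leaf, or a normal sphere). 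None of this, nor any substitute for it, appears in your proposal; the ``main obstacle'' you name (simultaneous normalization of spiraling leaves) is real but is the comparatively routine part, handled by choosing the triangulations $T(X_j,Y_j)$ adapted to the tangencies with $\mathbb S^1_{\tau=0,1}$ and counting signed intersections with edges.

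Two further gaps. First, your induction ``cut along the depth-$0$ leaves and reduce the depth'' does not by itself produce a \emph{finite} collection of packs: a finite depth foliation has uncountably many leaves in each positive-depth stratum, and the finiteness of $\Xi$ rests on a Kneser-type bound for the number of pairwise normally non-parallel leaves with respect to a fixed triangulation $T(X_k,Y_k)$, which in turn requires the leaves to have been normalized---so the normalization cannot be deferred to an unspecified ``earlier theory.'' Second, the Kneser argument only works when all leaves are principal (no leaf subordinate to another); the general case needs the separate reduction via the Denjoy-type trick at the end of the paper's proof, which your proposal does not address.
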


The proof will be given in Section~\ref{normal-sec}.

\section{Examples}\label{examp-sec}
Before proceeding with the proof of Theorem~\ref{diagram-exist-thm}
we demonstrate how a rectangular diagram of a foliation
may look in practice.

\begin{defi}
A rectangular diagram of a foliation~$\Xi$ is said to be \emph{reduced}
if there are no two packs of rectangles~$P',P''\in\Xi$ such that
\begin{equation}\label{min=max-eq}
r_{\min}(P')=r_{\max}(P'').
\end{equation}
\end{defi}

Clearly, if~\eqref{min=max-eq} holds, then the same foliation is represented
by the diagram~$(\Xi\setminus\{P',P''\})\cup\{P'\cup P''\}$, which has
fewer packs of rectangles. So, Theorem~\ref{diagram-exist-thm}
can be strengthen by requiring~$\Xi$ to be reduced.

\begin{prop}
Let~$\Xi$ be a reduced rectangular diagram of a foliation in the complement
of a link. Then~$\Xi$ can be recovered from the union of oriented sloped
curves
\begin{equation}\label{all-sloped-curves-eq}
\bigcup_{P\in\Xi}(\cabl P\cup\cabr P).
\end{equation}
\end{prop}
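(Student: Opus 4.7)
The plan is to reconstruct $\Xi$ from the oriented set $X = \bigcup_{P\in\Xi}(\cabl P \cup \cabr P)$ in three stages: first recover the collection $\Pi_{\min}(\Xi) \cup \Pi_{\max}(\Xi)$ as a set of rectangles, then classify each such rectangle as a first or a last rectangle of some pack, and finally assemble the packs themselves.

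For the first stage, Proposition~\ref{catr=cabl-prop} gives $\bigcup_{P\in\Xi}\catr P = \bigcup_{P\in\Xi}\cabl P$ and $\bigcup_{P\in\Xi}\catl P = \bigcup_{P\in\Xi}\cabr P$, so $X$ already coincides with the union of all four corner arcs over all packs. Each connected component of $X$ is a sloped arc, and the components split naturally into two classes: \emph{decreasing} sloped arcs (which are unions of $\cabl$ and $\catr$ arcs) and \emph{increasing} ones (unions of $\cabr$ and $\catl$ arcs). The ``critical points'' of $X$---endpoints of maximal sloped arcs together with points where a decreasing arc meets an increasing one---coincide with the corners of rectangles in $\Pi_{\min}(\Xi) \cup \Pi_{\max}(\Xi)$, since any corner arc begins or ends at such a corner and sloped arcs of opposite monotonicity can only cross at corners. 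The finite sets of $\theta$- and $\varphi$-coordinates of these critical points generate a grid on $\mathbb T^2$, and the rectangles of $\Pi_{\min}(\Xi) \cup \Pi_{\max}(\Xi)$ are recoverable as rectangles in this grid whose four corners are critical points around which $X$ behaves consistently.

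For the second stage, we use the orientations of the sloped arcs in $X$ to decide whether each recovered rectangle $r$ belongs to $\Pi_{\min}(\Xi)$ or $\Pi_{\max}(\Xi)$. At a corner of $r = r_{\min}(P)$, every corner arc of $P$ incident to this corner emanates outward (starts at the corner), whereas at a corner of $r = r_{\max}(P)$, every corner arc of $P$ terminates inward at the corner. Because $\Xi$ is reduced, no rectangle can be simultaneously a first and a last rectangle, so this orientation data classifies each rectangle unambiguously, even when several rectangles share a corner.

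For the third stage, with $\Pi_{\min}(\Xi)$ and $\Pi_{\max}(\Xi)$ identified, each pack $P \in \Xi$ is pinned down by its pair $(r_{\min}(P), r_{\max}(P))$ together with any two of its corner arcs. The arc $\cabl P$ is the unique sub-arc of $X$ connecting $\cabl r_{\min}(P)$ to $\cabl r_{\max}(P)$ along a decreasing sloped component in a manner compatible with the orientations, and similarly for $\cabr P$. A third corner arc, say $\catr P$, is then also readable from $X$ via the identities of Proposition~\ref{catr=cabl-prop}, and the earlier proposition that three corner arcs determine a pack finishes the reconstruction. The main obstacle is the second stage: verifying that the local orientation data at each critical point really does classify the ambient rectangles unambiguously, which requires a careful case analysis at corners shared among several rectangles and relies crucially on the reducedness hypothesis together with the compatibility conditions on packs.
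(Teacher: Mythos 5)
Your overall strategy --- first recover the rectangles of $\Pi_{\min}(\Xi)\cup\Pi_{\max}(\Xi)$ from ``critical points'' of the curve system, then classify each as a first or last rectangle, then assemble the packs --- has a genuine gap at stage~1 which propagates to the later stages. The claim that the corners of the rectangles of $\Pi_{\min}(\Xi)\cup\Pi_{\max}(\Xi)$ coincide with the endpoints of maximal sloped arcs together with the meeting points of arcs of opposite monotonicity is false. First, by the corner-arc condition in Definition~\ref{compat-pack-def} (which is unchanged for almost compatible packs), a strictly decreasing corner arc and a strictly increasing one can never meet: their union could not be a sloped arc. So that source of critical points is empty. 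Second, and more seriously, when $\cabl{P_{i+1}}$ continues $\cabl{P_i}$ for two almost compatible packs, the common point $\cabl{r_{\max}(P_i)}=\cabl{r_{\min}(P_{i+1})}$ is an \emph{interior} point of the maximal decreasing component containing both arcs, not an endpoint; reducedness only forbids the equality $r_{\min}(P_{i+1})=r_{\max}(P_i)$ of rectangles, not the coincidence of single corners, and such junctions occur throughout the examples of Section~\ref{examp-sec}. Your stage~1 therefore misses most of the corners it needs, and the crux of the whole proposition --- deciding, for a given starting corner on a maximal sloped component, at which interior point the corner arc $\cabl P$ terminates, i.e.\ where one pack ends and the next begins --- is exactly the ``careful case analysis'' you defer at the end without carrying it out.

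The paper's proof avoids locating first and last rectangles pointwise altogether: it characterizes $\Xi$ as the set of \emph{maximal} packs $P$ all four of whose corner arcs are contained in $\Gamma$ with matching orientations. Proposition~\ref{catr=cabl-prop} guarantees that $\Gamma$ contains all four corner arcs of every $P\in\Xi$ (not just the two you start from), Proposition~\ref{pack-defined-by-arcs-prop} guarantees that a pack is determined by its corner arcs, and reducedness is used precisely to ensure that each $P\in\Xi$ is maximal among such packs, since a proper extension inside $\Gamma$ would force $r_{\min}(P')=r_{\max}(P'')$ for two packs of $\Xi$. To repair your argument you would need either to supply the analysis pinning down the junction corners inside the maximal sloped components, or to replace stages 1--3 by this maximality characterization.
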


\begin{proof}
Denote the union of curves~\eqref{all-sloped-curves-eq} by~$\Gamma$.
It follows from Propositions~\ref{pack-defined-by-arcs-prop}
and~\ref{catr=cabl-prop} that~$\Xi$ is the set of maximal packs of rectangles~$P$
such that~$\cabl P\cup\cabr P\cup\catl P\cup\catr P\subset\Gamma$,
and the orientations of the corner arcs of~$P$ agree with the orientations of
the respective components of~$\Gamma$.
\end{proof}

Thus, in order to specify a reduced rectangular diagram of a foliation~$\Xi$, we just
need to provide the union of arcs~\eqref{all-sloped-curves-eq}.

\begin{exam}
We start from a trivial example, which
was considered in detail in Section~\ref{discs-sec}, the Reeb foliation
in the complement of an unknot. All rectangles participating
in the discussed rectangular diagrams of individual leaves
can be collected into nine packs of rectangles, the union
of the corner arcs of which is shown in Figure~\ref{reeb-fig}.
\begin{figure}
    \includegraphics[scale=7]{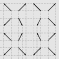}
    \put(-197,-5){$\tiny\frac1{16}$}
    \put(-172,-5){$\tiny\frac3{16}$}
    \put(-147,-5){$\tiny\frac5{16}$}
    \put(-132,-5){$\tiny\frac38$}
    \put(-122,-5){$\tiny\frac7{16}$}
    \put(-97,-5){$\tiny\frac9{16}$}
    \put(-82,-5){$\tiny\frac58$}
    \put(-72,-5){$\tiny\frac{11}{16}$}
    \put(-47,-5){$\tiny\frac{13}{16}$}
    \put(-22,-5){$\tiny\frac{15}{16}$}
    \put(-217,17){$\tiny\frac1{16}$}
    \put(-217,42){$\tiny\frac3{16}$}
    \put(-217,67){$\tiny\frac5{16}$}
    \put(-217,92){$\tiny\frac7{16}$}
    \put(-217,117){$\tiny\frac9{16}$}
    \put(-217,142){$\tiny\frac{11}{16}$}
    \put(-217,167){$\tiny\frac{13}{16}$}
    \put(-217,192){$\tiny\frac{15}{16}$}
    \caption{Rectangular diagram of the Reeb foliation in the complement of an unknot}\label{reeb-fig}
\end{figure}
\end{exam}

Note that the information about orientations of the arcs in the diagram
of a foliation is redundant. Indeed, an orientation of any of them
prescribes the orientations of the others, so there are only
two possible ways to orient all arcs in a consistent manner. And which
one of the two is correct can be determined by checking
which of the surfaces~$\widehat\Pi_{\min}(\Xi)$
and~$\widehat\Pi_{\max}(\Xi)$ contains a tube around the
given link. If the orientation is correct, this must be~$\widehat\Pi_{\min}(\Xi)$.

So, in further examples we omit arrows in the diagrams of foliations.

\begin{exam}
(A diagram of a foliation in the complement of the Figure Eight knot.)
A more complicated example is shown in Figure~\ref{fig-8-foli-fig},
where a rectangular diagram of a foliation is presented in the top picture
by the union of corner arcs, and the bottom pictures shows
the respective rectangular diagrams~$\Pi_{\min}$ (on the left) and~$\Pi_{\max}$ (on the right).
The green rectangles in the diagrams are positive,
and red ones are negative.
\begin{figure}[ht]
    \includegraphics[scale=.2]{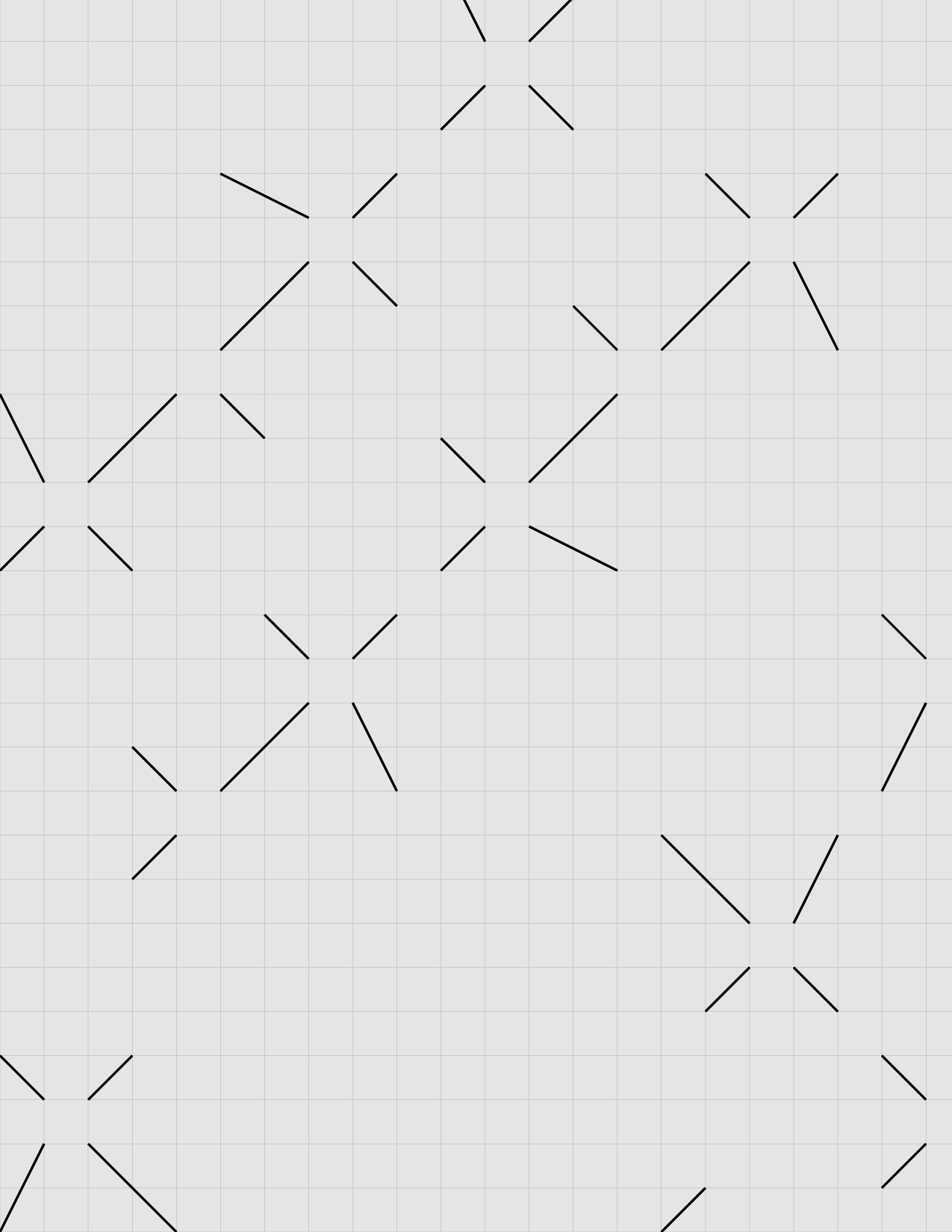} 
    
    \includegraphics[scale=.2]{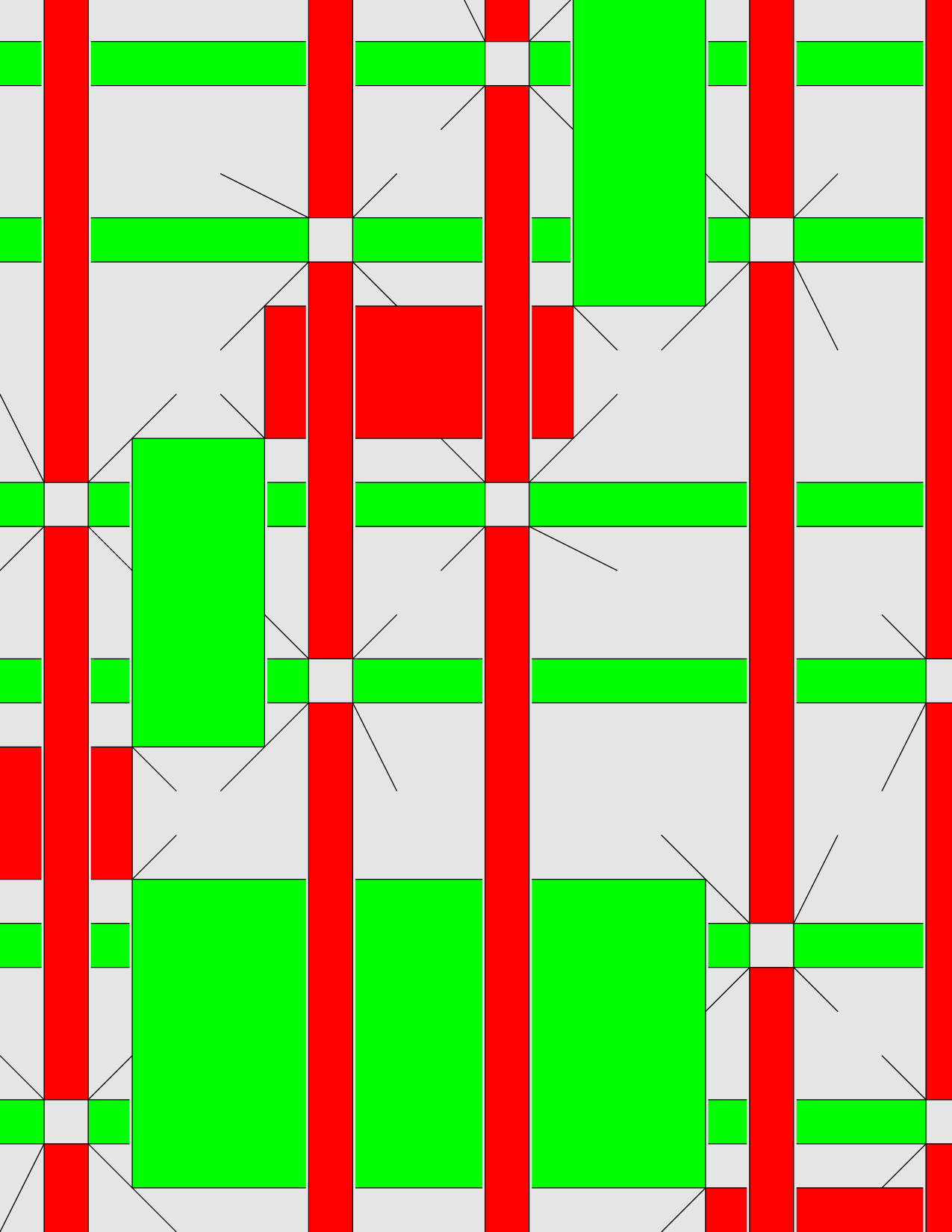}   \hskip1cm
    \includegraphics[scale=.2]{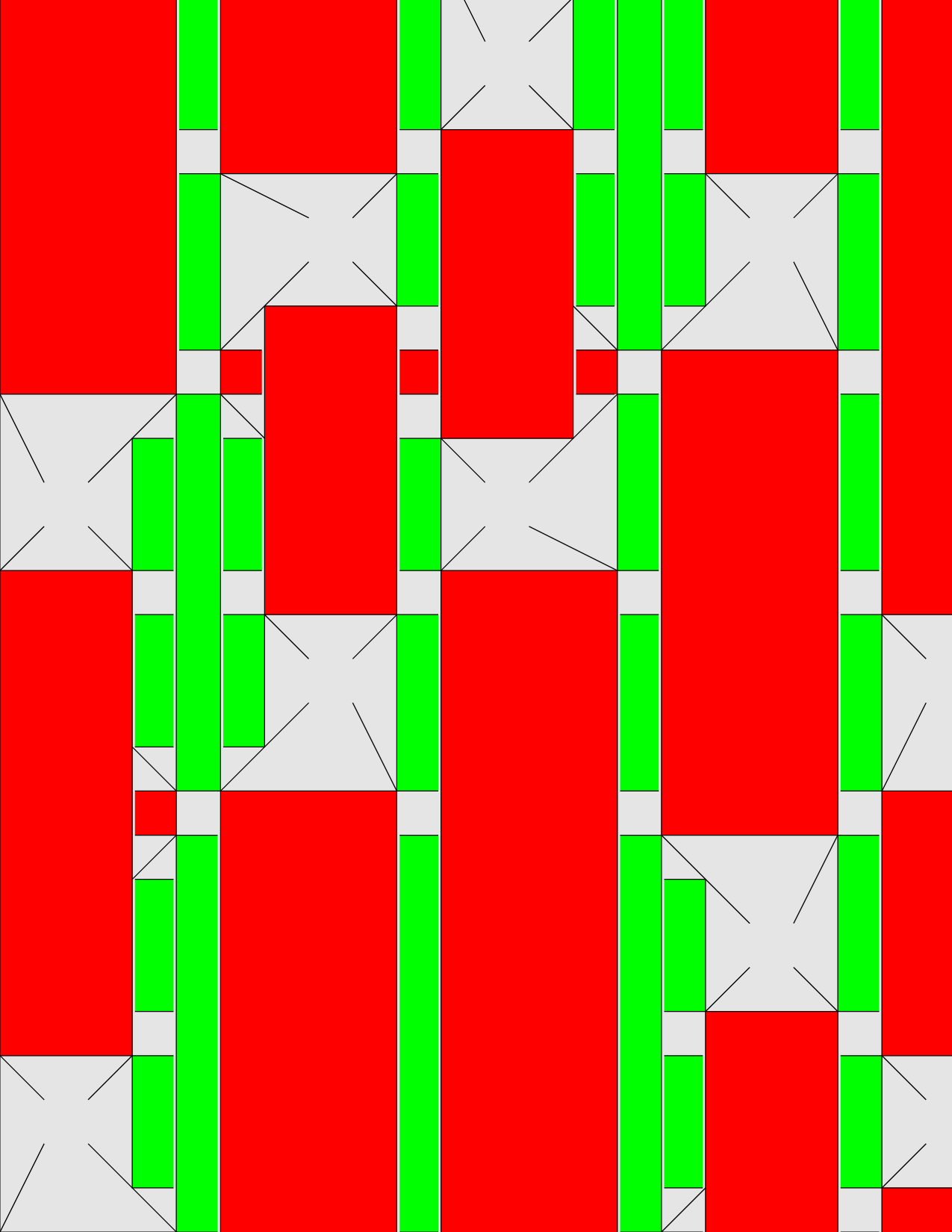} 
    \caption{A diagram of a foliation in the Figure Eight knot complement}\label{fig-8-foli-fig}
\end{figure}

As in the previous example, all the corner arcs of the diagram are just straight line segments,
which is due to the fact that this foliation in the complement of the Figure Eight knot
is obtained from a fibration by spinning the leaves around the boundary of a tubular neighborhood of the knot. This can be seen as follows.

Clearly, $\Pi_{\min}$ consists of a rectangular diagram of a connected surface
and a rectangular diagram of a tube around the knot. Let~$\Pi_0$ be the former, and~$\Omega$ the latter.
One can verify that~$\widehat\Pi_0$ is a genus-one Seifert surface for the Figure Eight knot,
and that~$\partial\Pi_0=\partial\Pi_{\min}=\partial\Pi_{\max}$.

Now, similarly to the case of a Reeb component described in Section~\ref{discs-sec},
in each pack of rectangles~$P$ such that~$r_{\min}(P)\in\Omega$, we can choose
an infinite sequence of rectangles converging to~$r_{\min}(P)$ so that
all the selected rectangles form a rectangular diagram~$A$ of a half-open annulus
such that~$\partial A=\partial\Pi_0$, and the annulus~$\widehat A$ is wound onto the tube~$\widehat\Omega$.

It can then be verified that~$\Pi_{\max}\cup A$ is obtained from~$\Pi_0\cup A$ by a positive
deformation, and that~$\Pi_0$ is obtained from~$\Pi_{\max}$ by a sequence of positive flypes and
positive bubble reduction moves.
\end{exam}

\begin{exam}
Shown in Figure~\ref{5_2-fig} is a rectangular diagram of a foliation
in the complement of the knot~$5_2$. This time, the knot is not fibered,
and foliation has depth two (in the classical settings, depth one), the minimal
possible depth of a taut foliation in this case. Again, the top picture
represents the union of all corner arcs of the diagram, and the two
bottom pictures shows the diagrams~$\Pi_{\min}$ and~$\Pi_{\max}$.
Now the surface~$\widehat\Pi_{\max}$ has two connected components
one of which is a Seifert surface for the knot~$5_2$ and the other is a two-disc.
Accordingly, the cavity complementary to the union of all the `curved cubes' associated
with the packs of rectangles from the diagram is homeomorphic
to~$\widehat\Pi_{\max}\times(0;1)$, and thus has two connected components.
\begin{figure}[ht]
\includegraphics[scale=.5]{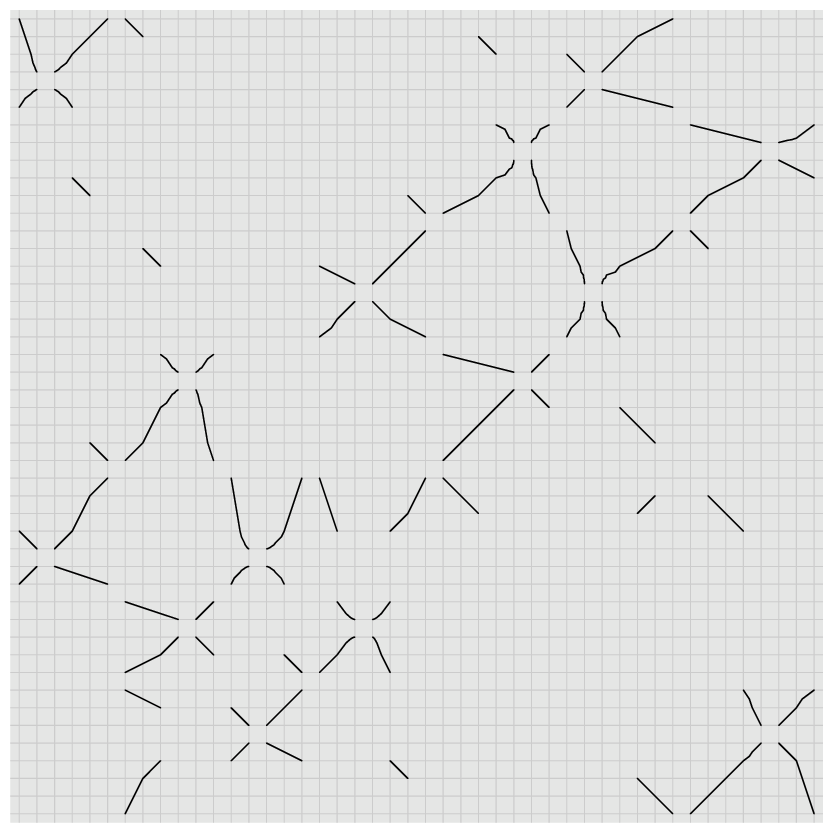}

\includegraphics[scale=.5]{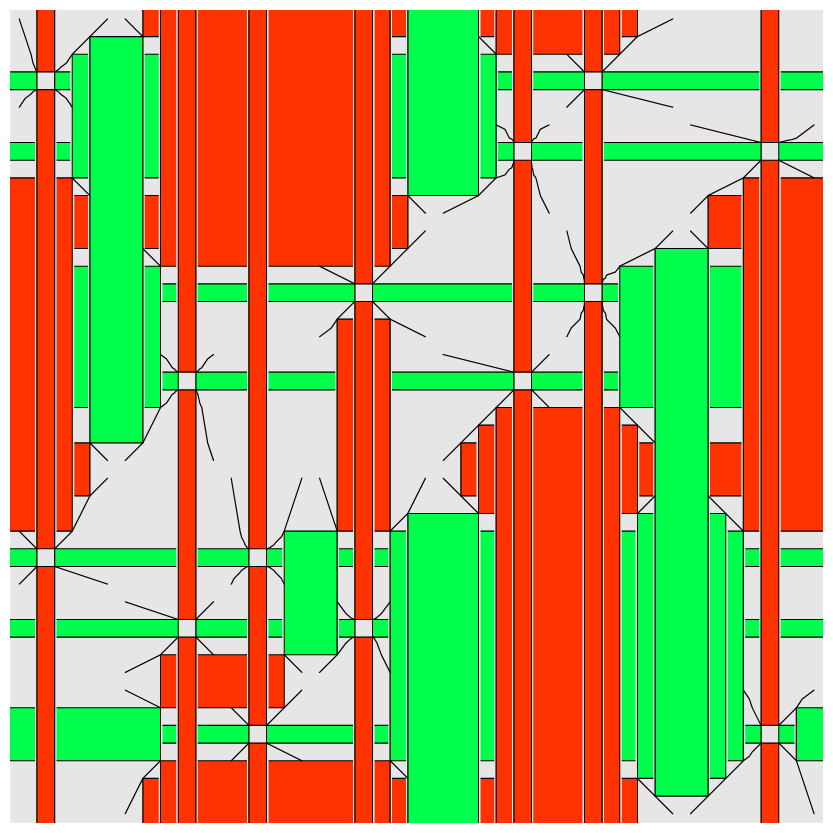}
\hskip1cm
\includegraphics[scale=.5]{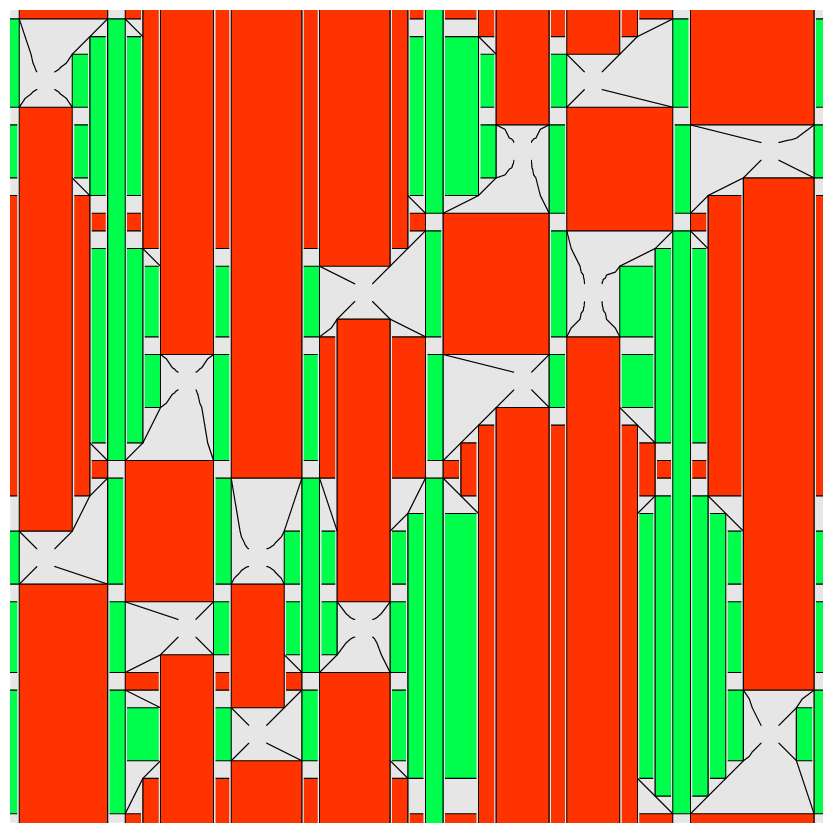}
\caption{A rectangular diagram of a taut foliation in the complement to
the knot~$5_2$}\label{5_2-fig}
\end{figure}
\end{exam}

\section{Normalizing a foliation}\label{normal-sec}
To prove Theorem~\ref{diagram-exist-thm}, we use the classical Kneser--Haken theory of normal
surfaces, which is tightly related with rectangular diagrams of surfaces in the
case when~$\mathbb S^3$ is triangulated in a special way. We will also make use of
the thin position technique, which is due to D.\,Gabai and A.\,Thompson, and
the concept of an almost normal surface introduced by~H.\,Rubinstein.

Recall that by a \emph{normal disc} in a $3$-simplex~$\Delta$ one means a properly embedded disc~$d\subset\Delta$ disjoint from
the vertices of~$\Delta$ and intersecting edges of~$\Delta$ transversely so that each edge meets~$\partial d$ at most once.
Two normal discs in~$\Delta$ are said to be \emph{equivalent} if they intersect the same edges of~$\Delta$.
In each $3$-simplex, there are seven equivalence classes of normal discs, four classes of \emph{triangles}
and three classes of \emph{quadrilaterals},
which are named according to the number of their intersections with the edges of the triangulation.

Let~$T$ be a triangulation of a $3$-manifold~$M$.
A surface~$F$ embedded in~$M$ is said to be \emph{normal with respect to~$T$} (or simply \emph{normal} when~$T$
is understood) if every connected component
of the intersection of~$F$ with every $3$-simplex~$\Delta$ of~$T$ is a normal disc in~$\Delta$.

Let a triangulation~$T$ be fixed. Two normal surfaces~$F,F'\subset M$ are said to be~\emph{normal isotopic}
if there is an ambient isotopy in~$M$ bringing~$F$ to~$F'$ and preserving all simplexes of~$T$,
and \emph{normal parallel} if there exists an embedding~$\psi:F\times[0;1]\rightarrow M$
such that~$\psi(F\times 0)=F$, $\psi(F\times 1)=F'$ and the surface~$\psi(F\times t)$ is normal for all~$t\in[0;1]$.

With every pair~$(X,Y)$ in which~$X$ and~$Y$ are finite subsets of~$\mathbb S^1$ each having at least three points,
we associate a triangulation~$T(X,Y)$ as follows. First,
we triangulate two copies of~$\mathbb S^1$ so that~$X$ is the set of vertices in
the first copy, and~$Y$ is the set of vertices in the second one. Then we take for~$T(X,Y)$ the join
of the two triangulations, which yields a triangulation of~$\mathbb S^3=\mathbb S^1*\mathbb S^1$.
In other words, $T(X,Y)$ is defined by requiring that every set of
the form~$\overline{\alpha*\beta}\subset\mathbb S^1*\mathbb S^1$, where~$\alpha$
and~$\beta$ are connected components of~$\mathbb S^1\setminus X$ and~$\mathbb S^1\setminus Y$,
respectively, is a $3$-simplex of~$T(X,Y)$.

\begin{prop}\label{rect=>norm-prop}
Let~$\Pi$ be a rectangular diagram of a surface with~$\partial\Pi=\varnothing$,
and let~$X,Y\subset\mathbb S^1$ be finite subsets disjoint from~$\Theta(\Pi)$ and~$\Phi(\Pi)$, respectively,
such that every rectangle in~$\Pi$ contains at least one point from~$X\times Y\subset\mathbb T^2$. Then the surface~$\widehat\Pi$
is normal with respect to~$T(X,Y)$.
\end{prop}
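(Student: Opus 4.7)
The plan is to verify the defining properties of a normal surface for $\widehat\Pi$ with respect to $T(X,Y)$: disjointness from vertices, and intersection with each 3-simplex as a disjoint union of normal discs. The disjointness from vertices is immediate, since each tile $\widehat r$ meets $\mathbb S^1_{\tau=1}$ precisely at $\{\theta_-(r),\theta_+(r)\}$ and $\mathbb S^1_{\tau=0}$ at $\{\varphi_-(r),\varphi_+(r)\}$, so $\widehat\Pi\cap(\mathbb S^1_{\tau=0}\cup\mathbb S^1_{\tau=1})=\Theta(\Pi)\cup\Phi(\Pi)$, and the hypothesis gives $X\cap\Theta(\Pi)=\varnothing=Y\cap\Phi(\Pi)$.

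The core is a tile-by-tile analysis. I would fix a 3-simplex $\Delta=\overline\alpha*\overline\beta$ of $T(X,Y)$ with $\alpha=(x_-,x_+)\subset\mathbb S^1\setminus X$ and $\beta=(y_-,y_+)\subset\mathbb S^1\setminus Y$, and a rectangle $r=[\theta_1;\theta_2]\times[\varphi_1;\varphi_2]\in\Pi$ with both $\overline\alpha\cap[\theta_1;\theta_2]\neq\varnothing$ and $\overline\beta\cap[\varphi_1;\varphi_2]\neq\varnothing$. The intersection $\widehat r\cap\Delta$ is the image under the extension of $\iota_r$ of the sub-rectangle $s=(\overline\alpha\cap[\theta_1;\theta_2])\times(\overline\beta\cap[\varphi_1;\varphi_2])\subset r$. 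The hypothesis that $r$ contains a point of $X\times Y$ forces $X\cap(\theta_1,\theta_2)\neq\varnothing$ and $Y\cap(\varphi_1,\varphi_2)\neq\varnothing$, so $s$ is always a proper sub-rectangle of $r$, preventing degenerate bigon contributions. Using that $\iota_r$ collapses each side of $r$ to the corresponding corner of $\widehat r$ and inflates each corner of $r$ to the corresponding boundary arc $\theta_i*\varphi_j$ of $\widehat r$, the sub-tile $\widehat r\cap\Delta$ then falls into three cases: (i) if $\overline\alpha\subset(\theta_1,\theta_2)$ and $\overline\beta\subset(\varphi_1,\varphi_2)$, the sub-tile is a normal quadrilateral meeting the four join edges of $\Delta$; (ii) if exactly one boundary coordinate of $r$ lies in $\alpha\cup\beta$, one side of $s$ collapses and the sub-tile is a normal triangle cutting off the opposite vertex of $\Delta$; (iii) if both happen at a common corner of $r$, say $\theta_1\in\alpha$ and $\varphi_1\in\beta$, the inflated arc $\theta_1*\varphi_1\subset\partial\widehat r$ lies in the interior of $\Delta$ and is part of the boundary of the sub-tile.

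Case (iii) is the subtle one and is resolved using $\partial\Pi=\varnothing$. Compatibility of rectangles precludes adjacent-corner sharing, so at each non-free corner $(\theta_i,\varphi_j)$ of $\Pi$ there is exactly one other rectangle $r'\in\Pi$ sharing it, positioned on the diagonally opposite quadrant. By property~3 of the tile construction, the sub-tile of $\widehat{r'}$ in $\Delta$ is a symmetric case-(iii) disc whose boundary also contains the arc $\theta_i*\varphi_j$, meeting the sub-tile of $\widehat r$ tangentially along that arc. Gluing the two sub-tiles along $\theta_i*\varphi_j$ produces a properly embedded normal quadrilateral whose four boundary arcs meet the two circle edges of $\Delta$ and two diagonally opposite join edges. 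Leaving case-(i) and case-(ii) sub-tiles as they are and performing this gluing for every matched pair of corners in $\Delta$ exhibits $\widehat\Pi\cap\Delta$ as a disjoint union of normal discs; pairwise compatibility of the rectangles in $\Pi$ ensures that unrelated sub-tiles are disjoint in the interior of $\Delta$. The main obstacle is the combinatorial verification that the glued case-(iii) quadrilaterals meet four edges of $\Delta$ avoiding a common vertex (so they are genuine quadrilaterals rather than degenerations), and the careful bookkeeping when multiple corners of $\Pi$ inflate into the interior of a single $\Delta$ simultaneously.
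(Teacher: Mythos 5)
Your overall route coincides with the paper's: you intersect each tile $\widehat r$ with the shadow rectangle $r_\Delta=[\theta';\theta'']\times[\varphi';\varphi'']$ of a $3$-simplex, sort the resulting pieces into quadrilaterals (shadow interior to $r$), triangles (one side of $r$ crossing the shadow), and corner pieces, and in the corner case you use $\partial\Pi=\varnothing$ together with compatibility to pair each piece with the unique diagonally opposite partner tile and glue the two along the arc $\widehat{(\theta_i,\varphi_j)}$ into a normal quadrilateral meeting the two circle edges and two opposite join edges. All of this matches the paper's Cases 1--3 and is correct.

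There is, however, one genuine gap. You assert that $s=(\overline\alpha\cap[\theta_1;\theta_2])\times(\overline\beta\cap[\varphi_1;\varphi_2])$ is ``always a proper sub-rectangle of $r$'' because $X\cap(\theta_1;\theta_2)\neq\varnothing$ and $Y\cap(\varphi_1;\varphi_2)\neq\varnothing$. That does not follow: these conditions only say that $[\theta_1;\theta_2]\not\subset\overline\alpha$ and $[\varphi_1;\varphi_2]\not\subset\overline\beta$, and they leave open the wrap-around configuration in which both $\theta_1$ and $\theta_2$ lie in the same complementary interval $\alpha$ of $X$ (equivalently, the complementary arc $[\theta_2;\theta_1]$ is contained in $\alpha$); there $\overline\alpha\cap[\theta_1;\theta_2]$ is disconnected and the piece $\widehat r\cap\Delta$ is not a normal disc at all. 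This is exactly the paper's Case~4, and your enumeration (i)--(iii) does not cover it; it must be shown \emph{impossible}, not bookkept. The argument uses all three hypotheses at once: since $\partial\Pi=\varnothing$, the corner $(\theta_1,\varphi_1)$ of $r$ is shared with a rectangle $r'$ which, by compatibility, sits in the diagonally opposite quadrant, so its $\theta$-extent is contained in $[\theta_2;\theta_1]\subset\alpha$, an interval disjoint from $X$; hence $r'$ contains no point of $X\times Y$, contradicting the hypothesis on $\Pi$. Once this configuration is excluded, your argument goes through.
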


\begin{proof}
Let~$\Delta$ be a $3$-simplex of~$T(X,Y)$, and let~$r=[\theta_1;\theta_2]\times[\varphi_1;\varphi_2]$
be an element of~$\Pi$ such that the tile~$\widehat r$
has non-empty intersection with the interior of~$\Delta$. Let also~$r_\Delta=[\theta';\theta'']\times[\varphi';\varphi'']$
be the rectangle for which~$\Delta=[\theta';\theta'']*[\varphi';\varphi'']$. By construction, $\theta'$ and~$\theta''$
(respectively, $\varphi'$ and~$\varphi''$) are two consecutive points of~$X$ (respectively, $Y$)
with respect to the circular order in~$\mathbb S^1$. Since~$X\cap\Theta(\Pi)=Y\cap\Phi(\Pi)=\varnothing$,
we have~$\{\theta',\theta''\}\cap\{\theta_1,\theta_2\}=\{\varphi',\varphi''\}\cap\{\varphi_1,\varphi_2\}=\varnothing$.

The interior of the tile~$\widehat r$ is transverse to all arcs of the form~$\widehat v$, where~$v\in\mathbb T^2$. Therefore,
if the intersection~$r_\Delta\cap r$ is connected, then the intersection~$\Delta\cap\widehat r$ is a disc.
Moreover, since the only possible intersection of~$\partial\widehat r$ with~$\partial\Delta$
occurs at~$\mathbb S^1_{\tau=0}\cup\mathbb S^1_{\tau=1}$, the disc~$\widehat r$ intersects all faces of~$\Delta$ transversely.

Since, by hypothesis, $r$ contains at least one point from~$X\times Y$, one of the corners of~$r_\Delta$
lies in the interior of~$r$. So, $r$ contains either all four, or exactly two, or just one corner of~$r_\Delta$.
Up to symmetries, there are the following four cases to consider, which are illustrated in Figure~\ref{r-delta-fig}.

\begin{figure}[ht]
    \begin{tabular}{ccc}
    Case~1&\hbox to 8mm{}&Case 2\\[2mm]
    \includegraphics[scale=.7]{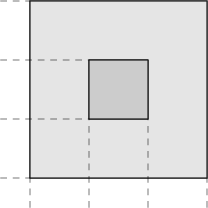}\put(-37,38){$r_\Delta$}
    \put(-51,18){$r$}\hskip1cm\includegraphics[scale=.4]{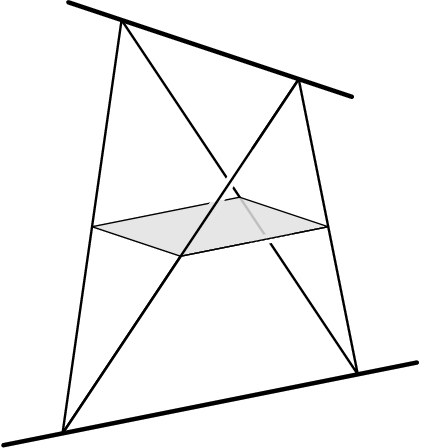}
    \put(-173, -10){$\theta_1$}
    \put(-113, -10){$\theta_2$}
    \put(-153, -10){$\theta'$}
    \put(-133, -10){$\theta''$}
    \put(-193, 8){$\varphi_1$}
    \put(-193, 68){$\varphi_2$}
    \put(-193, 28){$\varphi'$}
    \put(-193, 48){$\varphi''$}
    \put(-72, -7){$\varphi'$}
    \put(-14, 6){$\varphi''$}
    \put(-24.5, 74){$\theta''$}
    \put(-59, 85){$\theta'$}
    \put(3, 14){$\mathbb S^1_{\tau=0}$}
    \put(-10, 64){$\mathbb S^1_{\tau=1}$}
    &&
    \includegraphics[scale=.7]{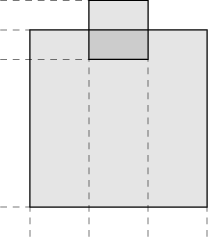}\put(-37,74){$r_\Delta$}
    \put(-51,18){$r$}\hskip1cm\includegraphics[scale=.4]{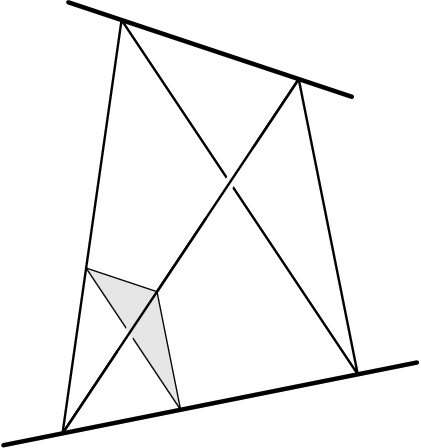}
    \put(-173, -10){$\theta_1$}
    \put(-113, -10){$\theta_2$}
    \put(-153, -10){$\theta'$}
    \put(-133, -10){$\theta''$}
    \put(-193, 8){$\varphi_1$}
    \put(-193, 68){$\varphi_2$}
    \put(-193, 58){$\varphi'$}
    \put(-193, 78){$\varphi''$}
    \put(-72, -7){$\varphi'$}
    \put(-49, -2){$\varphi_2$}
    \put(-13, 6){$\varphi''$}
    \put(-24.5, 74){$\theta''$}
    \put(-59, 85){$\theta'$}
    \put(3, 14){$\mathbb S^1_{\tau=0}$}
    \put(-10, 64){$\mathbb S^1_{\tau=1}$}
    \\[8mm]
    Case~3&&Case 4\\[2mm]
    \includegraphics[scale=.7]{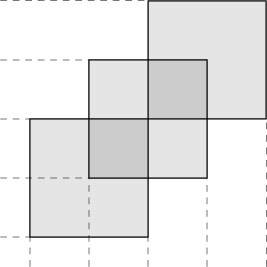}\put(-35,38){$r_\Delta$}
    \put(-71,18){$r$}\put(-13,75){$r'$}\hskip1cm\includegraphics[scale=.4]{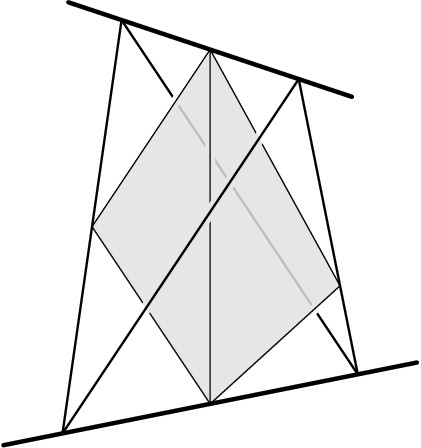}
    \put(-193, -10){$\theta_1$}
    \put(-173, -10){$\theta'$}
    \put(-113, -10){$\theta_3$}
    \put(-153, -10){$\theta_2$}
    \put(-133, -10){$\theta''$}
    \put(-213, 8){$\varphi_1$}
    \put(-213, 68){$\varphi''$}
    \put(-213, 28){$\varphi'$}
    \put(-213, 48){$\varphi_2$}
    \put(-213, 88){$\varphi_3$}
    \put(-43, -1){$\varphi_2$}
    \put(-42.5, 80.5){$\theta_2$}
    \put(-72, -7){$\varphi'$}
    \put(-13, 6){$\varphi''$}
    \put(-24.5, 74){$\theta''$}
    \put(-59, 85){$\theta'$}
    \put(3, 14){$\mathbb S^1_{\tau=0}$}
    \put(-10, 64){$\mathbb S^1_{\tau=1}$}
    &&
    \includegraphics[scale=.7]{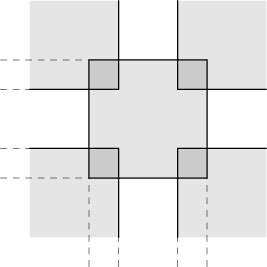}\put(-44,50){$r_\Delta$}
    \put(-71,18){$r$}
    \put(-53, -10){$\theta_2$}
    \put(-34, -10){$\theta_1$}
    \put(-65, -10){$\theta'$}
    \put(-22, -10){$\theta''$}
    \put(-102,28){$\varphi'$}
    \put(-102,40){$\varphi_2$}
    \put(-102,59){$\varphi_1$}
    \put(-102,70){$\varphi''$}
     \hskip1cm\raisebox{16mm}{impossible}\hbox to 8mm{}
    \end{tabular}
    \caption{Possible (and impossible) intersections of~$r_\Delta$ with~$r$}\label{r-delta-fig}
    \end{figure}

\smallskip\noindent\emph{Case 1}: $r_\Delta\subset r\setminus\partial r$.
The intersection~$\Delta\cap\widehat r$ is a disc that meets each arc of the form~$\widehat v$, where~$v\in r_\Delta$,
exactly once, and does not meet the two edges of~$\Delta$ contained in~$\mathbb S^1_{\tau=0}\cup\mathbb
S^1_{\tau=1}$.
Hence, $\Delta\cap\widehat r$ is a normal disc in~$\Delta$ equivalent to the one defined by the equation~$\tau=1/2$.

\smallskip\noindent\emph{Case 2}: $r_\Delta\cap r=[\theta';\theta'']\times[\varphi';\varphi_2]$.
The boundary of~$\widehat r$ meets~$\partial\Delta$ only at the point~$\varphi=\varphi_2$ of~$\mathbb S^1_{\tau=0}$,
so~$\partial(\Delta\cap\widehat r)\subset\partial\Delta$.
The intersection~$\Delta\cap\widehat r$ is a disc that meets each of the edges~$\widehat{(\theta',\varphi')}$,
$\widehat{(\theta'',\varphi')}$, and~$\Delta\cap\mathbb S^1_{\tau=0}$ exactly once,
and does not meet the other three edges of~$\Delta$. Hence, $\Delta\cap\widehat r$ is normal.

\smallskip\noindent\emph{Case 3}: $r_\Delta\cap r=[\theta';\theta_2]\times[\varphi';\varphi_2]$.
In this case, there must be another rectangle~$r'\in\Pi$ that has the form~$[\theta_2;\theta_3]\times[\varphi_2;\varphi_3]$
and intersects~$r_\Delta$ in the rectangle~$[\theta_2;\theta'']\times[\varphi_2;\varphi'']$.
Each of the intersections~$\Delta\cap\widehat r$ and~$\Delta\cap\widehat r'$ is a disc, and the boundaries
of these discs share the arc~$\widehat{(\theta_2;\varphi_2)}$. One can see that the union of these two discs
is also a disc whose boundary is contained in~$\partial\Delta$ and
meets each of the edges~$\widehat{(\theta';\varphi')}$, $\widehat{(\theta'';\varphi'')}$,
$\Delta\cap\mathbb S^1_{\tau=0}$, and~~$\Delta\cap\mathbb S^1_{\tau=1}$ exactly once,
and does not meet the other two edges of~$\Delta$. Hence, $\Delta\cap(\widehat r\cup\widehat r')$
is a normal disc.

\smallskip\noindent\emph{Case 4}: all four corners of~$r_\Delta$ are contained in~$r$, but~$r_\Delta\not\subset r$.
There must be another rectangle~$r'\in\Pi$ that shares a corner with~$r$. In order to be compatible with~$r$,
this rectangle must be contained in~$r_\Delta$, which contradicts the condition~$r'\cap(X\times Y)\ne\varnothing$.
Therefore, this case is impossible.

\smallskip
We see that, for any $3$-simplex~$\Delta$ of~$T(X,Y)$,
all connected components of the intersection~$\Delta\cap\widehat\Pi$ are normal discs. Thus, $\widehat\Pi$ is normal.
\end{proof}

\begin{prop}\label{normal=>rect-prop}
Let~$X,Y\subset\mathbb S^1$ be finite subsets with~$|X|,|Y|\geqslant3$, and let~$F\subset\mathbb S^3$
be a normal surface with respect to~$T(X,Y)$ such that every connected component of~$F$
has a non-empty intersection with both circles~$\mathbb S^1_{\tau=0}$ and~$\mathbb S^1_{\tau=1}$. Then~$F$
is normal isotopic to a surface of the form~$\widehat\Pi$, where~$\Pi$ is a rectangular diagram of a surface.
\end{prop}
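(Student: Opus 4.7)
The plan is to read off~$\Pi$ directly from the combinatorial type of~$F$ in each 3-simplex of~$T(X,Y)$, and then verify that~$\widehat\Pi$ recreates~$F$ up to normal isotopy. After a small normal isotopy I would assume that~$F$ is transverse to each of the circles~$\mathbb S^1_{\tau=0}$ and~$\mathbb S^1_{\tau=1}$. Let~$\Theta(\Pi)\subset\mathbb S^1\setminus X$ denote the set of~$\theta$-coordinates of the points of~$F\cap\mathbb S^1_{\tau=1}$, and let~$\Phi(\Pi)\subset\mathbb S^1\setminus Y$ denote the set of~$\varphi$-coordinates of the points of~$F\cap\mathbb S^1_{\tau=0}$. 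The rectangles of~$\Pi$ will then have their vertices in the grid~$\Theta(\Pi)\times\Phi(\Pi)$.

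The main idea is to invert the case analysis carried out in the proof of Proposition~\ref{rect=>norm-prop}. In each 3-simplex~$\Delta=[\theta';\theta'']*[\varphi';\varphi'']$ a normal disc of~$F$ is one of the seven standard types: four triangles, the ``central'' quadrilateral meeting all four join edges, and two ``diagonal'' quadrilaterals. Reading the earlier proposition backwards, a central quadrilateral must be a piece of a tile~$\widehat r$ with~$r_\Delta\subset r$ (Case~1); a triangle must be a piece of a tile~$\widehat r$ one of whose sides slices through~$\Delta$ (Case~2); and a diagonal quadrilateral must split as a union of two tile pieces coming from two rectangles meeting at a corner (Case~3). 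I would define the rectangles of~$\Pi$ by a tracing procedure: starting from any normal disc~$d_0$ in some~$\Delta_0$, identify it as a piece of a rectangle-to-be whose ``far'' sides are still to be determined, and propagate the candidate rectangle across faces of~$\Delta_0$ into neighbouring 3-simplices; the propagation terminates when the candidate encounters~$\mathbb S^1_{\tau=0}\cup\mathbb S^1_{\tau=1}$, and the terminal points define the missing sides.

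The main obstacle is showing that this tracing procedure always terminates at~$\mathbb S^1_{\tau=0}\cup\mathbb S^1_{\tau=1}$ rather than cycling back on itself. A non-terminating trace would correspond to a connected component of~$F$ built, for instance, entirely from triangles around a vertex of the triangulation (a vertex-link sphere) or from quadrilaterals nested into a torus parallel to~$\mathbb T^2_{\tau=1/2}$; these are exactly the configurations forbidden by the hypothesis that every component of~$F$ meets both spine circles. A secondary technicality is the consistent splitting of diagonal quadrilaterals between the two rectangles sharing a corner; this is handled by the standard normal-surface-theory fact that at most one diagonal type of quadrilateral occurs in any 3-simplex, which unambiguously prescribes through which corner of~$r_\Delta$ the two rectangles are to meet.

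With~$\Pi$ in hand, the final step is to check that~$\Pi$ is a rectangular diagram of a surface and that~$\widehat\Pi$ is normal isotopic to~$F$. Pairwise compatibility of the rectangles of~$\Pi$ follows from the observation that any incompatibility would force two normal discs of~$F$ in a common 3-simplex either to intersect or to be of non-matching type. The equality~$\partial\Pi=\varnothing$ follows from the closedness of~$F$: each arc~$\widehat v$, for~$v$ a vertex of~$\Pi$, arises from a transverse meeting of~$F$ with~$\widehat v$, and the local surface structure of~$F$ at such a point forces exactly two rectangles to share~$v$. Once~$\Pi$ produces the same normal disc counts in each 3-simplex as~$F$, Proposition~\ref{rect=>norm-prop} together with the standard rigidity of normal surfaces within their normal isotopy class yields that~$\widehat\Pi$ is normal isotopic to~$F$.
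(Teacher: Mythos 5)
Your plan is essentially the paper's: both proofs reconstruct the rectangles of~$\Pi$ by reading how the normal discs of~$F$ match up across the $2$-faces of~$T(X,Y)$, with the central quadrilaterals, triangles, and ``diagonal'' (vertical) quadrilaterals playing exactly the roles you assign them. The paper packages your ``tracing procedure'' differently: it cuts~$F$ along one arc~$\widehat v$ inside each vertical quadrilateral, pulls back the flat metric of~$\mathbb T^2$ to each component~$V$ of the cut surface via the map~$\omega$ with~$\widehat{\omega(p)}\cap V=p$, and takes the metric completion~$\widetilde V$. Since the $2$-skeleton cuts~$V$ into pieces isometric to products of intervals, $\widetilde V$ is a compact flat surface with geodesic boundary and right-angle corners, hence a Euclidean rectangle, a flat annulus, or a flat torus; the latter two are exactly your vertex-link spheres and~$\mathbb T^2_{\tau=1/2}$-parallel tori and are excluded by the hypothesis on intersections with the two circles.

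The one place where your sketch is genuinely thinner than the argument it needs is the step ``the propagation terminates \ldots{} and the terminal points define the missing sides''. Two things must be proved there, and your ``for instance'' only gestures at them. First, you need a complete classification of non-terminating traces, not examples; the flat-surface trichotomy above supplies it. Second, even a terminating trace could a priori produce an \emph{immersed} rectangle: a strip that has found its two $\varphi$-sides but spirals more than once around the~$\theta$-circle before closing off at~$\mathbb S^1_{\tau=1}$ would have $\theta$-extent exceeding~$2\pi$ and would not be a rectangle in~$\mathbb T^2$. The paper rules this out by observing that each side of~$\widetilde V$ lying over~$\mathbb S^1_{\tau=0}$ (resp.~$\mathbb S^1_{\tau=1}$) has length equal to the variation of~$\theta$ (resp.~$\varphi$) along the link of the corresponding intersection point of~$F$ with that circle, which is at most~$2\pi$ because~$F$ is embedded and transverse to the circle, and strictly less than~$2\pi$ once a cutting arc passes through that point; hence~$\omega|_{\widetilde V}$ is an embedding. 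You should add this argument. A minor further correction: the corner at which the two rectangles produced by a diagonal quadrilateral meet is determined by the type of that quadrilateral and by its intersection points with the two circle edges of the simplex, not by the one-quadrilateral-type-per-simplex fact (which is about coexistence of distinct types, not about locating the corner). With these points supplied, your reconstruction produces the same rectangles as the paper's~$r(V)$, and the concluding normal-parallelism of~$\widehat\Pi$ and~$F$ goes through as you say.
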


\begin{proof}
We call a normal disc~$d$ of~$F$ \emph{vertical} if~$d$ intersects both circles~$\mathbb S^1_{\tau=0}$
and~$\mathbb S^1_{\tau=1}$. One can see that this can only happen when~$d$ is a quadrilateral,
and the two corners of~$d$ that lie on these circles are opposite to each other.

Let~$d_1,d_2,d_3,\ldots$ be all vertical normal discs of~$F$ (there may be an infinite number of them), and, for each~$i$,
let~$\alpha_i$ be a proper arc in~$d_i$ having endpoints at~$d_i\cap\mathbb S^1_{\tau=0}$ and~$d_i\cap\mathbb S^1_{\tau=1}$. We may assume without loss of generality that the surface~$F$
is transverse to all arcs of the form~$\widehat v$, where~$v\in\mathbb T^2$, outside the union~$\bigcup_i\alpha_i$,
whereas the arcs~$\alpha_i$ have the form~$\widehat v$.

Indeed, let us endow each $3$-simplex of~$T(X,Y)$ with an affine structure so that each arc of the form~$\widehat v$,
where~$v\in\mathbb T^2$, is a straight line segment. Then~$F$ is normal isotopic to a surface~$F'$ every
normal disc in which is either an affine triangle or a quadrilateral composed of two affine triangles in a $3$-simplex of~$T(X,Y)$,
where the normal isotopy can be chosen to be fixed on the $1$-skeleton of~$T(X,Y)$.
Moreover, vertical normal discs of~$F'$ can be chosen
in the form of the union of two affine triangles sharing a side of the form~$\widehat v$ for some~$v\in\mathbb T^2$.
The surface~$F'$ is transverse to the arcs of the form~$\widehat v$ everywhere except at a union of arcs
that can be taken for~$\alpha_i$.

In what follows we assume that~$\alpha_i=\widehat v_i$ for some~$v_i\in\mathbb T^2$, and~$F\setminus\bigcup_i\alpha_i$
is transverse to all arcs of the form~$\widehat v$.

Let~$V$ be a connected component of~$F\setminus\bigcup_i\alpha_i$. Define a map~$\omega:V\rightarrow\mathbb T^2$
by demanding that~$\widehat{\omega(p)}\cap V=p$ for all~$p\in V$. The pullpack of the standard locally
Euclidean metric on~$\mathbb T^2=\mathbb S^1\times\mathbb S^1$
is a locally Euclidean metric on~$V$. Let~$\widetilde V$ be the minimal compactification of~$V$
making this metric complete and keeping it locally Euclidean, and extend the map~$\omega$ to~$\widetilde V$ by continuity.
Clearly, $\omega$ is a locally isometric immersion~$\widetilde V\rightarrow\mathbb T^2$.

For every $3$-simplex~$\Delta$ of~$T(X,Y)$,
the closure of each connected component of the intersection~$V\cap\Delta$ is either a non-vertical normal disc
or one half of a vertical quadrilateral. This means that the image of~$V\cap\Delta$ under~$\omega$
has the form~$I\times J$, where each of~$I$ and~$J$ is either a closed or half-open interval of~$\mathbb S^1$.
Thus, the intersection of~$V$ with the $2$-skeleton of~$T(X,Y)$ cuts~$V$ into pieces isometric to such
subsets of~$\mathbb T^2$.

This implies that~$\widetilde V$ can be cut into finitely many rectangles, and the angle at any breaking point
of~$\partial\widetilde V$ is~$\pi/2$. Therefore, $\widetilde V$
is isometric either to a rectangle in the Euclidean plane, or a flat annulus with geodesic
boundary, or a flat two-torus. Now we claim that~$\omega$ is actually an embedding, and the latter
two cases (an annulus and a two-torus) are impossible.

Indeed, the vertical and horizontal arcs in~$\omega(\partial\widetilde V)$ come from the intersections of~$\overline V$
with~$\mathbb S^1_{\tau=0}$ and~$\mathbb S^1_{\tau=1}$, respectively. The equality~$\partial\widetilde V=\varnothing$
would mean that~$\overline V\cap(\mathbb S^1_{\tau=0}\cup\mathbb S^1_{\tau=1})=\varnothing$,
which implies~$\partial\overline V=\varnothing$. So, $V=\overline V$ would be a connected component of~$F$
disjoint from~$\mathbb S^1_{\tau=0}\cup\mathbb S^1_{\tau=1}$, which is assumed not to be the case.

If the entire~$\omega(\partial\widetilde V)$ is horizontal (respectively, vertical) in~$\mathbb T^2$,
then~$\overline V\cap\mathbb S^1_{\tau=1}=\varnothing$ (respectively, $\overline V\cap\mathbb S^1_{\tau=0}=\varnothing$),
which implies that~$V=\overline V$ is a spherical component of~$F$ intersecting only one of the
circles~$\mathbb S^1_{\tau=0}$ and~$\mathbb S^1_{\tau=1}$, and this again contradicts the hypothesis of the proposition.

Thus, $\widetilde V$ is a rectangle. The length of the sides of~$\widetilde V$ corresponding to
the intersections of~$\overline V$ with~$\mathbb S^1_{\tau=0}$ (respectively, $\mathbb S^1_{\tau=1}$) is equal to the variation of the
$\theta$-coordinate (respectively, $\varphi$-coordinate) on the boundary of a small neighborhood of such an intersection in~$V$, which implies that it is smaller than~$2\pi$. Therefore, $\omega$ is an embedding, and~$\omega(\widetilde V)$ is a rectangle in~$\mathbb T^2$.
Denote it by~$r(V)$.

Now let~$\Pi$ be the set of all rectangles of the form~$r(V)$, where~$V$ runs over the set of connected
components of~$F\setminus\bigcup_i\alpha_i$. One can see that the surface~$\widehat\Pi$ is normal parallel to~$F$.
\end{proof}

Thus, a normal surface with respect to a triangulation of the form~$T(X,Y)$ can always be viewed as one represented
by a rectangular diagram of a surface~$\Pi$ unless the surface is a torus disjoint from~$\mathbb S^1_{\tau=0}\cup\mathbb S^1_{\tau=1}$
or has a spherical component disjoint from one of the circles~$\mathbb S^1_{\tau=0}$
and~$\mathbb S^1_{\tau=1}$. The converse is true only under the additional assumptions that~$X\cap\Theta(\Pi)=Y\cap\Phi(\Pi)=\varnothing$
and no rectangle of the diagram is disjoint from~$X\times Y$. If the latter condition does not hold, then~$\widehat\Pi$
is not normal with respect to~$T(X,Y)$. However, using rectangular diagrams, we can take control over the normalization
process of such a surface.

To this end, we introduce more general transformations of rectangular diagrams of surfaces,
which allow to implement a normalization in the sense of the normal surface theory,
and then show that they can be decomposed into bubble moves and flypes.

All rectangular diagrams of surfaces considered in the sequel are assumed to
be oriented, and the considered transformations are divided into positive and negative
accordingly.
A transformation~$\Pi\mapsto\Pi'$ of rectangular diagrams of surfaces
is said to be \emph{positive} if it results in `pushing $\widehat\Pi$ in the positive direction'.
Formally, this means that there is a submanifold~$M$ of~$\mathbb S^3$ which satisfies~$\partial M=\widehat\Pi''-\widehat\Pi$ provided
that it is endowed with the induced orientation and viewed as a three-chain, where~$\Pi''$ is a diagram obtained from~$\Pi'$ by a small positive deformation.
The intersection of all such manifolds~$M$ will be referred to
as the \emph{bordism associated with the transformation~$\Pi\mapsto\Pi'$}. The
inverse of a positive transformation is called \emph{negative}.

There is one exceptional case of a transformation considered below that can be positive and negative simultaneously.
This occurs when~$\Pi'=\varnothing$ and~$\widehat\Pi$ is a two-sphere. In this case,
we will silently assume that such transformations are endowed with a choice of an associated bordism.

\begin{defi}
Two distinct rectangles~$r=[\theta_1;\theta_2]\times[\varphi_1;\varphi_2]$
and~$r=[\theta_1';\theta_2']\times[\varphi_1';\varphi_2']$ are said to be
\emph{$\theta$-companions} (respectively, \emph{$\varphi$-companions})
of one another if~$\theta_1'=\theta_1$ and~$\theta_2'=\theta_2$ (respectively,
$\varphi_1'=\varphi_1$ and~$\varphi_2'=\varphi_2$).
\end{defi}

\begin{defi}\label{gen-wrinkle-def}
Let~$\Pi$ be a rectangular diagram of a surface, and let~$r=[\theta_1;\theta_2]\times[\varphi_1;\varphi_2]\in\Pi$
be a rectangle such that~$(\theta_1;\theta_2)\cap\Theta(\Pi)=\varnothing$. Suppose that~$\partial\Pi=\varnothing$
and that there is no $\theta$-companion of~$r$ in~$\Pi$. Suppose also that~$[\theta_2;\theta_1]\times[\varphi_2;\varphi_1]$
is not a rectangle of~$\Pi$.
Let~$r_0=r,{}r_1,r_2,\ldots,r_m$ be all rectangles of~$\Pi$ having at least one side in~$\theta_1\times\mathbb S^1$
or~$\theta_2\times\mathbb S^1$.

Let~$\chi_*:\mathbb T^2\rightarrow\{0,1\}$ be the characteristic function of the annulus~$[\theta_1;\theta_2]\times\mathbb
S^1$,
and let~$\chi_i$, $i=0,{}1,\ldots,m$, be the characteristic functions of the respective rectangles~$r_i$.
Then one can see that the closure of the set
$$\left\{p\in\mathbb T^2:\chi_*(p)+\sum_{i=0}^m\chi_i(p)\equiv 1\,(\mathrm{mod}\,2)\right\}$$
is a union of $m-1$ rectangles whose interiors are
pairwise disjoint. Let~$r_1',\ldots,r_{m-1}'$ be these rectangles,
and let $\Pi'=(\Pi\setminus\{r_0,r_1,\ldots,r_m\})\cup\{r_1',\ldots,r_{m-1}'\}$. We orient~$\Pi'$
so that the signs of the common rectangles of~$\Pi$ and~$\Pi'$ agree.
Then we say that~$\Pi\mapsto\Pi'$ is a \emph{generalized vertical wrinkle reduction}, and the inverse
operation a \emph{generalized vertical wrinkle creation}.

\emph{Horizontal generalized wrinkle reduction} and \emph{creation} operations are defined similarly
with the exchange of the roles of the coordinates~$\theta$ and~$\varphi$.
\end{defi}

An example of a generalized wrinkle reduction is shown in Figure~\ref{gen-wrinkle-fig}.
\begin{figure}[ht]
    \includegraphics[scale=0.65]{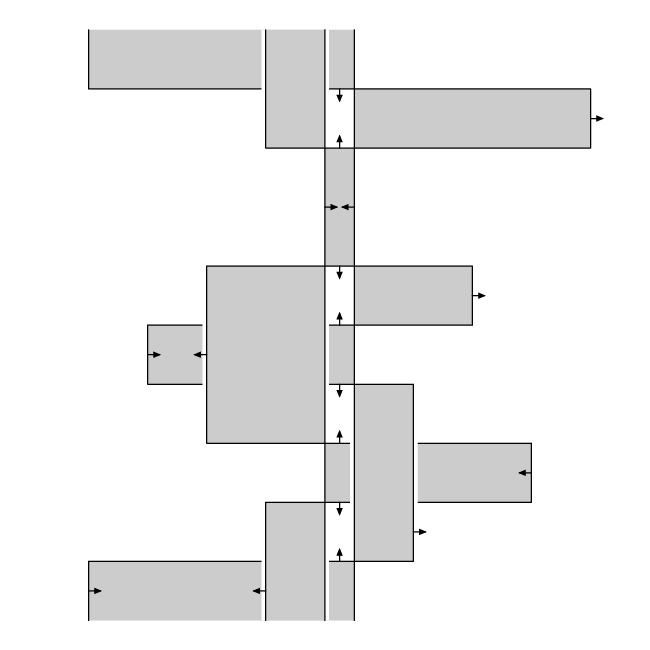}\put(-99,145){$r$}
    \raisebox{90pt}{$\longrightarrow$}
    \includegraphics[scale=0.65]{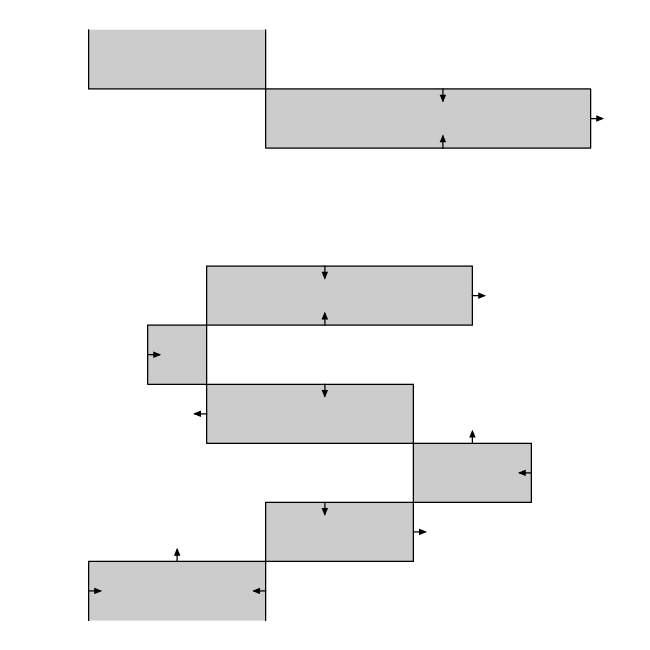}
    \caption{Generalized positive wrinkle reduction move}\label{gen-wrinkle-fig}
\end{figure}

Now we claim that each generalized wrinkle reduction move is either positive or negative. This can
be verified directly by examining the change in the associated surface, and can also be seen from the following lemma.

\begin{lemm}\label{wrinkle-decomp-lem}
A generalized wrinkle reduction \emph(creation\emph) can be decomposed
into a sequence of flypes followed by a bubble reduction move so that all these moves are either positive
or all negative.
\end{lemm}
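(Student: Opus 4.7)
The plan is to prove the lemma by induction on~$m$, reducing the wrinkle step by step through flypes until a single bubble reduction completes the transformation. The key numerical observation is that a generalized wrinkle reduction decreases the total rectangle count by~$2$, a bubble reduction also decreases it by~$2$, and each flype preserves it; hence any decomposition must use exactly one bubble reduction along with some number of flypes.

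For the inductive step with general~$m$, I would identify a pair of adjacent rectangles among~$r_1,\ldots,r_m$ sharing a corner on one of the meridians $\theta_1 \times \mathbb S^1$ or $\theta_2 \times \mathbb S^1$, and argue that together with a third ambient rectangle (supplied by the combinatorics of the strip) and a natural ``$r_*$'' they form a quadruple satisfying the flype hypotheses of Definition~\ref{flype_move-def}. Applying this flype yields a new diagram satisfying the hypotheses of Definition~\ref{gen-wrinkle-def} for a wrinkle with one fewer neighbor, and the inductive hypothesis provides the remaining decomposition. The base case (minimal admissible~$m$, forced by $\partial\Pi=\varnothing$ and the ban on $\theta$-companions together with the orientation constraints~\eqref{orientation1-eq}--\eqref{orientation2-eq}) reduces the configuration to the pattern of a bubble move, directly verifying the claim. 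Sign control comes from the same orientation rules: the sign of each $r_i$ is determined by $\epsilon(r)$ and its attachment to $\theta_1$ or $\theta_2$, forcing the sign of every flype (governed by $\epsilon(r_1)$ in Definition~\ref{flype_move-def}) and of the final bubble reduction to agree with the positivity or negativity of the overall wrinkle reduction.

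The main obstacle is the combinatorial choice of the flype pair at each inductive step and the verification that the resulting diagram still satisfies the structural hypotheses of Definition~\ref{gen-wrinkle-def}; in particular, that no $\theta$-companion of~$r$ is created and that all rectangles remain pairwise compatible. Edge cases where surrounding rectangles wrap around the torus or share vertices outside the strip require careful treatment. These can be navigated by appealing to the geometric picture provided by Lemmas~\ref{bubble_cavity-lem} and~\ref{flype-hol-lem}, which show that each elementary move corresponds to a positive leap over a three-ball. The generalized wrinkle reduction itself is such a leap, whose three-ball admits a nested decomposition into sub-balls corresponding to the individual flype and bubble steps; the desired combinatorial decomposition mirrors this geometric one, and Lemma~\ref{h1...hk-lem} ensures the consistency of the leap composition.
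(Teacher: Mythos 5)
Your overall strategy coincides with the paper's: induct on the number~$m$ of rectangles abutting the thin rectangle~$r$, peel off one flype per inductive step, and finish with a single bubble reduction in the base case $m=2$; your rectangle-count bookkeeping (wrinkle reduction and bubble reduction each drop the count by two, flypes preserve it) correctly predicts exactly one bubble reduction. However, the step you yourself flag as the main obstacle --- exhibiting the flype quadruple at each stage --- is exactly the content of the proof, and it is left unresolved in your plan. The paper settles it with a short combinatorial dichotomy: take $r_1,r_2,r_3,r_4\in\Pi$ with $\catr{r_1}=\cabl{r}$, $\catl{r_2}=\cabr{r}$, $\catl{r_3}=\cabr{r_1}$, $\catr{r_4}=\cabl{r_2}$; one checks that $r_3\ne r_4$ (otherwise the move would not be a generalized wrinkle reduction), and hence either $r_1$ overlays $r_4$ or $r_2$ overlays $r_3$. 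In either case the four-rectangle chain through~$r$ satisfies the hypotheses of Definition~\ref{flype_move-def}, the flype is positive, and what remains is a generalized wrinkle reduction of complexity $m-2$ whose thin rectangle is again positive --- this last point is how the sign control you describe is actually verified. By contrast, your proposed fallback of extracting the combinatorial decomposition from a nested decomposition of the associated cavity via Lemmas~\ref{bubble_cavity-lem}, \ref{flype-hol-lem} and~\ref{h1...hk-lem} runs the implication in the wrong direction: those lemmas produce a cavity from a given combinatorial move, not a flype or bubble move from a given sub-ball, so they cannot supply the existence of the flype you need. The combinatorial dichotomy above is what has to be proved, and it is elementary once stated.
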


\begin{proof}
Due to symmetry, it suffices to consider the case when~$\Pi\mapsto\Pi'$ is a vertical wrinkle reduction move, and the rectangle~$r$ from
Definition~\ref{gen-wrinkle-def} is positive. The number~$m$ from this definition
will be referred to as the \emph{complexity} of the generalized wrinkle reduction~$\Pi\mapsto\Pi'$.
The proof is by induction in~$m$. One can see that~$m$ is always even.

If~$m=2$, then~$\Pi\mapsto\Pi'$ is a positive bubble reduction move, and we are done.

Suppose that~$m>2$, and let~$r_1$, $r_2$, $r_3$, and~$r_4$ be rectangles from~$\Pi$
such that~$\catr r_1=\cabl r$, $\catl r_2=\cabr r$, $\catl r_3=\cabr r_1$, and~$\catr r_4=\cabl r_2$.
We cannot have~$r_3=r_4$, since otherwise~$\Pi\mapsto\Pi'$ would not be a generalized
wrinkle reduction. Therefore, one of the following must hold:
\begin{enumerate}
\item
$r_1$ overlays~$r_4$;
\item
$r_2$ overlays~$r_3$.
\end{enumerate}
In both cases, the two overlapping rectangles make~$\Pi$ suitable for a positive flype~$\Pi\mapsto\Pi''$.
Then~$\Pi''\mapsto\Pi'$ is a generalized vertical wrinkle reduction of complexity~$m-2$,
and the new rectangle~$r$ is again positive. Hence, we have the induction step.
\end{proof}

Recall that a \emph{compressing disc} for a surface~$F$ embedded into a three-manifold~$M$ is
a two-disc~$d$ embedded in~$M$ such that~$d\cap F=\partial d$ and~$d\cap\partial F=\varnothing$.
For each compressing disc one defines a \emph{compression} of~$F$, which consists in removing a small neighborhood of~$\partial d$ from~$F$ and gluing up
the two new boundary components by two-discs `parallel' to~$d$. A surface~$F$ is \emph{incompressible}
if any compression results in an isotopy and the addition of a spherical component to~$F$ bounding a three-ball
whose interior is disjoint from~$F$.

\begin{defi}
Let~$\Pi$ be a rectangular diagram of a surface, and let~$r\notin\Pi$ be a rectangle
such that its sides are also sides of four distinct rectangles from~$\Pi$, and no vertex of~$\Pi$ lies in the interior
of~$r$.
In other words, there are~$\theta_1,\theta_2,\theta_3,\theta_4,\varphi_1,\varphi_2,\varphi_3,\varphi_4\in\mathbb S^1$ such that:
\begin{enumerate}
\item
$\theta_1,\theta_2,\theta_3,\theta_4$ ($\varphi_1,\varphi_2,\varphi_3,\varphi_4$) follow in~$\mathbb S^1$
in the indicated order;
\item
no vertex of~$\Pi$ lies in~$(\theta_2;\theta_3)\times(\varphi_2;\varphi_3)$;
\item
the rectangles
$$r_1=[\theta_1;\theta_2]\times[\varphi_2;\varphi_3],\
r_2=[\theta_2;\theta_3]\times[\varphi_1;\varphi_2],\
r_3=[\theta_2;\theta_3]\times[\varphi_3;\varphi_4],\
r_4=[\theta_3;\theta_4]\times[\varphi_2;\varphi_3]$$
belong to~$\Pi$.
\end{enumerate}
Let~$\Pi'=(\Pi\setminus\{r_1,r_2,r_3,r_4\})\cup\{r',r''\}$, where
$$r'=[\theta_1;\theta_4]\times[\varphi_2;\varphi_3],\
r''=[\theta_2;\theta_3]\times[\varphi_1;\varphi_4].$$
Then we say that~$\Pi\mapsto\Pi'$ is an (\emph{ordinary}) \emph{compression}.
\end{defi}

The compression operation is illustrated in Figure~\ref{compres-fig}.
One can see, that if~$\Pi\mapsto\Pi'$ is a compression, then~$\widehat\Pi\mapsto\widehat\Pi'$
is a compression in the topological sense. Clearly, each compression is either positive or negative.
\begin{figure}
    \includegraphics[scale=1]{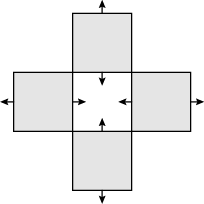}\put(-81,47){$r_1$}
    \put(-53,19){$r_2$}
    \put(-53,76){$r_3$}\put(-25,47){$r_4$}
    \hskip0.4cm
    \raisebox{1.6cm}{$\longrightarrow$}
    \hskip0.4cm
    \includegraphics[scale=1]{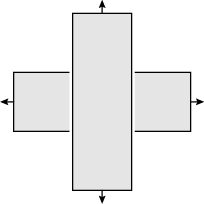}\put(-81,47){$r'$}\put(-53,19){$r''$}
    \caption{A positive compression}\label{compres-fig}
\end{figure}

\begin{lemm}\label{connected-sum-lem}
Let~$\Pi'$ and~$\Pi''$ be oriented rectangular diagrams of surfaces such that~$\Pi'\cap\Pi''=\varnothing$,
$\partial\Pi'=\partial\Pi''=\varnothing$,
and~$\Pi'\cup\Pi''$ is a rectangular diagram of a surface. Then, possibly after flipping the orientation
of~$\Pi''$, there exists a rectangular diagram of a surface~$\Pi$ such that~$\Pi\mapsto\Pi'\cup\Pi''$
is a compression.
\end{lemm}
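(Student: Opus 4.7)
The plan is to realize $\Pi \mapsto \Pi' \cup \Pi''$ explicitly as the compression arising from a single crossing pair of rectangles, one from each diagram. Topologically this will make $\widehat\Pi$ the connected sum $\widehat{\Pi'} \# \widehat{\Pi''}$ formed by tubing together discs $\widehat{r_\star} \subset \widehat{\Pi'}$ and $\widehat{r_{\star\star}} \subset \widehat{\Pi''}$; algebraically the undoing of the compression will replace these two rectangles by the four ``annulus'' rectangles $r_1,r_2,r_3,r_4$ surrounding a central rectangle $r = r_\star \cap r_{\star\star}$, exactly as in Definition~\ref{flype_move-def}... no, as in the compression definition just given.

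The main step is to locate $r_\star \in \Pi'$ and $r_{\star\star} \in \Pi''$ whose intersection $r := r_\star \cap r_{\star\star}$ is a non-degenerate rectangle lying in the common interior (alternative (iii) of compatibility). Since $\Pi' \cup \Pi''$ is a valid rectangular diagram of a surface, any such interior intersection automatically has the desired ``crossed'' shape, with the horizontal sides of $r$ lying on horizontal sides of $r_\star$ and the vertical sides of $r$ lying on vertical sides of $r_{\star\star}$ (or vice versa, in which case we swap the roles). The existence of such a pair is the main obstacle of the proof: I expect it to follow from the hypothesis $\partial\Pi' = \partial\Pi'' = \varnothing$ via a covering/linking argument in $\mathbb{T}^2$. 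Concretely, if no such pair existed, then the interiors of rectangles of $\Pi'$ and $\Pi''$ would be pairwise disjoint in $\mathbb{T}^2$, and I would argue that this is incompatible with both $\widehat{\Pi'}$ and $\widehat{\Pi''}$ being non-empty closed quasi-surfaces satisfying the compatibility constraints across the two diagrams.

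Once the pair has been identified, set $\theta_1 = \theta_-(r_\star)$, $\theta_4 = \theta_+(r_\star)$, $\theta_2 = \theta_-(r_{\star\star})$, $\theta_3 = \theta_+(r_{\star\star})$, and analogously for the $\varphi$ coordinates, so that $r_\star = [\theta_1;\theta_4] \times [\varphi_2;\varphi_3]$ and $r_{\star\star} = [\theta_2;\theta_3] \times [\varphi_1;\varphi_4]$ and $r = [\theta_2;\theta_3] \times [\varphi_2;\varphi_3]$. Condition~(ii) of Definition~\ref{def:diagram_quasi-surface} ensures that only finitely many vertices of $\Pi' \cup \Pi''$ lie near $r$; by shrinking $r$ slightly (i.e.\ replacing $\theta_2,\theta_3,\varphi_2,\varphi_3$ by nearby values on meridians/longitudes of $\Pi'\cup\Pi''$, and accordingly enlarging $r_1, r_2, r_3, r_4$) we arrange that no such vertex lies in the open interior of $r$. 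Define $r_1, r_2, r_3, r_4$ by the formulas of the compression definition and set $\Pi = (\Pi' \cup \Pi'' \setminus \{r_\star, r_{\star\star}\}) \cup \{r_1, r_2, r_3, r_4\}$. Checking that $\Pi$ is a valid rectangular diagram of a surface is routine: $r_1,\ldots,r_4$ share only corners among themselves; they are compatible with every other $r \in \Pi' \cup \Pi'' \setminus \{r_\star, r_{\star\star}\}$ because each $r_i$ is contained in $r_\star \cup r_{\star\star}$ and the interior of $r$ is vertex-free; and $\partial \Pi = \varnothing$ is inherited from $\partial(\Pi' \cup \Pi'') = \varnothing$ since a compression preserves free vertices.

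Finally, I handle orientations. Tracing the shared vertices of $r_1, r_2, r_3, r_4$ through rule~(ii) of the orientation definition forces $\epsilon(r_1) = \epsilon(r_4) = s$ and $\epsilon(r_2) = \epsilon(r_3) = -s$ for some sign $s$, and then rule~(i) applied to the longitudes $\varphi_2$ and $\varphi_1$ (which are shared with $r_\star$ and $r_{\star\star}$ respectively) forces $\epsilon'(r_\star) = s$ and $\epsilon''(r_{\star\star}) = -s$; so the original signs must be \emph{opposite}. If the original diagrams happen to give $\epsilon'(r_\star) = \epsilon''(r_{\star\star})$, I globally flip the orientation $\epsilon''$ of $\Pi''$ (simultaneously negating $\epsilon''$ on every rectangle, which manifestly still satisfies (i)--(ii)); after this flip the signs of $r_\star, r_{\star\star}$ become opposite, and $\Pi$ carries the consistent orientation assigning $\pm s$ to $r_1,\ldots,r_4$ as above and the original signs to all other rectangles. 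This completes the construction of $\Pi$ and exhibits $\Pi \mapsto \Pi' \cup \Pi''$ as a compression.
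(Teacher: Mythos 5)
Your strategy is the same as the paper's: locate a rectangle of $\Pi'$ and a rectangle of $\Pi''$ crossing in a common interior rectangle $r$, and take for $\Pi$ the diagram obtained by replacing this pair with the four rectangles forming the closure of their symmetric difference; your orientation bookkeeping at the end is correct. But there is a genuine gap at the central step. You take an \emph{arbitrary} overlapping pair $r_\star,r_{\star\star}$ and assert that compatibility of $r_1,\dots,r_4$ with the remaining rectangles is routine ``because each $r_i$ is contained in $r_\star\cup r_{\star\star}$ and the interior of $r$ is vertex-free.'' This is not a valid reason. A third rectangle of $\Pi'\cup\Pi''$ can be compatible with both $r_\star$ and $r_{\star\star}$ and yet contain a corner of $r=r_\star\cap r_{\star\star}$ in its open interior: take $r'''=[\theta_1';\theta_4']\times[\varphi_2';\varphi_3']$ with $\theta_1'\in(\theta_1;\theta_2)$, $\theta_4'\in(\theta_3;\theta_4)$, $\varphi_2'\in(\varphi_1;\varphi_2)$, $\varphi_3'\in(\varphi_3;\varphi_4)$; then $r'''\cap r_\star$ and $r'''\cap r_{\star\star}$ are vertex-free rectangles, so $r'''$ is compatible with the crossing pair, but $r'''\cap r_1$ is a rectangle containing the vertex $(\theta_2,\varphi_2)$ of $r_1$, which is not a vertex of $r'''$, so $r_1$ and $r'''$ are incompatible and your $\Pi$ is not a rectangular diagram. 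Geometrically, $\widehat r'''$ is a sheet of the surface passing \emph{between} $\widehat r_\star$ and $\widehat r_{\star\star}$ along the arcs $\widehat v$, $v\in r$, and such configurations certainly occur. The paper's proof handles exactly this by \emph{re-choosing} the pair after the first one is found: it picks $v$ in the overlap and replaces $r',r''$ by the pair whose tiles are pierced by $\widehat v$ at \emph{adjacent} points (the open subarc of $\widehat v$ between the two piercing points is disjoint from $\widehat\Pi'\cup\widehat\Pi''$), and only for such a pair are the corners of $r'\cap r''$ disjoint from all other rectangles. This re-choice is the key idea your proposal is missing.

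Two further remarks. First, your ``shrink $r$ slightly'' manoeuvre is not available: the numbers $\theta_2,\theta_3,\varphi_2,\varphi_3$ are coordinates of the given rectangles $r_{\star\star}$ and $r_\star$, which you cannot move without changing the diagrams $\Pi'$ and $\Pi''$ themselves; fortunately the condition it aims at is automatic, since compatibility forbids a vertex of any other rectangle from lying in the open interior of $r_\star$, hence of $r$. Second, the existence of an overlapping pair, which you rightly identify as the crux and derive from $\partial\Pi'=\partial\Pi''=\varnothing$, is left at the level of ``I expect it to follow''; the paper asserts this in one sentence as well, but as written your proposal does not prove it.
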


\begin{proof}
Since~$\partial\Pi'=\partial\Pi''=\varnothing$, there must be rectangles~$r'\in\Pi'$ and~$r''\in\Pi''$
such that their interiors have a non-empty intersection. Let~$v\in r'\cap r''$ be a point
not belonging to~$\Theta(\Pi'\cup\Pi'')\times\Phi(\Pi'\cup\Pi'')$. Then the arc~$\widehat v$
pierces both surfaces~$\widehat\Pi'$ and~$\widehat\Pi''$. Rechoose
the pair of rectangles~$r'\in\Pi'$ and~$r''\in\Pi''$ so that~$\widehat v$ pierces~$\widehat r'$
and~$\widehat r''$ in points that are next to each other, that is, so that
the open subarc of~$\widehat v$ between these points is disjoint from~$\widehat\Pi'\cup\widehat\Pi''$.

Due to this choice of~$r'$ and~$r''$ the corners of the rectangle~$r'\cap r''$
are disjoint from all rectangles from~$\Pi'\cup\Pi''$ other than~$r'$ and~$r''$.
The closure of the symmetric difference~$r'\triangle r''$ is a union of four
rectangles, which we denote by~$r_1$, $r_2$, $r_3$, and~$r_4$. One can
now take~$\bigl((\Pi'\cup\Pi'')\setminus\{r',r''\}\bigr)\cup\{r_1,r_2,r_3,r_4\}$
for~$\Pi$, and the passage~$\Pi\mapsto\Pi'\cup\Pi''$ will be a compression.
\end{proof}

\begin{defi}
Let~$\Pi$ be a rectangular diagram of a surface that can be decomposed into a disjoint
union of two diagrams of surfaces~$\Pi'$ and~$\Pi''$ such that~$\widehat\Pi''$
is a sphere that bounds a three-ball disjoint from~$\widehat\Pi'$.
Then the passage from~$\Pi$ to~$\Pi'$ is referred to as a \emph{spherical component removal}.
If~$\Pi''$ consists of just two rectangles, then this spherical component is
said to be \emph{trivial}.
\end{defi}

\begin{defi}\label{gen-compres-def}
Let~$\Pi$ be a rectangular diagram of a surface, and let~$r=[\theta_1;\theta_2]\times[\varphi_1;\varphi_2]\in\Pi$
be a rectangle such that~$(\theta_1;\theta_2)\cap\Theta(\Pi)=\varnothing$. Suppose that~$\partial\Pi=\varnothing$
and that there is at least one $\theta$-companion of~$r$ in~$\Pi$. Suppose also that
no rectangle of~$\Pi$ has the form~$[\theta_2;\theta_1]\times[\varphi';\varphi'']$. Then define~$\Pi'$ as in Definition~\ref{gen-wrinkle-def}
with the only difference that the number of new rectangles~$r_i'$ is now~$m-1-2k$, where~$k$ is the number
of $\theta$-companions of~$r$. Then we say that~$\Pi\mapsto\Pi'$ is a \emph{generalized
vertical compression}.

A \emph{generalized horizontal compression} is defined similarly with the roles of~$\theta$ and~$\varphi$
exchanged.
\end{defi}

\begin{lemm}\label{gen-compres-decomp-lem}
A positive \emph(respectively, negative\emph) generalized compression can be decomposed
into a sequence of transformations
that include flypes, ordinary compressions, and a bubble reduction move so that all these moves are positive
\emph(respectively, negative\emph).
\end{lemm}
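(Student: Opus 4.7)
Our plan is to mimic the strategy of Lemma~\ref{wrinkle-decomp-lem}, reducing each generalized compression to a generalized wrinkle reduction by extracting a single ordinary compression per $\theta$-companion. The argument will proceed by induction on the number $k\geqslant 1$ of $\theta$-companions of $r$. By symmetry, we may restrict to positive vertical generalized compressions and assume $r$ is positive; the other three cases will follow by flipping orientations or swapping the roles of $\theta$ and~$\varphi$.

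Pairwise compatibility forces any two $\theta$-companions to have strictly disjoint $\varphi$-intervals, so $r$ together with its companions determines a cyclic order in the $\varphi$-direction. Pick a companion $r^{(1)}=[\theta_1;\theta_2]\times[\varphi_1^{(1)};\varphi_2^{(1)}]$ whose $\varphi$-interval is adjacent to that of~$r$, i.e., such that the rectangular gap strip $S=[\theta_1;\theta_2]\times[\varphi_2;\varphi_1^{(1)}]$ contains no other companion of~$r$ in its interior. We then aim to perform a positive ordinary compression whose hole is~$S$, whose bottom and top neighbors are $r$ and $r^{(1)}$, and whose left and right neighbors are the non-companion rectangles of~$\Pi$ bordering the two vertical sides of~$S$. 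If these left and right borders each consist of a single rectangle of~$\Pi$ spanning the full $\varphi$-range of~$S$, the compression applies directly; otherwise, a vertical side of~$S$ is tiled by several stacked non-companion rectangles, and a positive flype inside such a stack (whose existence parallels the overlay-and-flype argument in the proof of Lemma~\ref{wrinkle-decomp-lem}) reduces the height of the stack, to be iterated until the minimal configuration is reached.

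After performing the ordinary compression, the rectangles $r$ and $r^{(1)}$ are replaced by the merged tall rectangle $r^{\ast}=[\theta_1;\theta_2]\times[\varphi_1;\varphi_2^{(1)}]$ together with a wide bridge rectangle whose $\theta$-sides lie strictly outside $\{\theta_1,\theta_2\}$ and which therefore plays no role in the next step. A direct count verifies that the resulting passage $\Pi^{\ast}\mapsto\Pi'$ is a positive generalized vertical transformation with main rectangle $r^{\ast}$ and exactly $k-1$ $\theta$-companions: it is a wrinkle reduction when $k=1$ (and is then decomposed by Lemma~\ref{wrinkle-decomp-lem}), and otherwise a generalized compression handled by the inductive hypothesis. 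Concatenating the preparatory positive flypes, the single positive ordinary compression, and the inductively obtained decomposition of $\Pi^{\ast}\mapsto\Pi'$ yields the required decomposition of $\Pi\mapsto\Pi'$ into positive flypes, positive ordinary compressions, and a single positive bubble reduction.

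The main obstacle is the existence of the positive flype inside a stack of boundary rectangles along a vertical side of~$S$ when the configuration is non-minimal. This is the analogue in the present setting of the overlay step in the proof of Lemma~\ref{wrinkle-decomp-lem}, and it requires a careful case analysis of the overlay relations among the non-companion rectangles bordering $S$, with some extra bookkeeping near the corners shared with~$r$ and with~$r^{(1)}$ to ensure that the chosen flype is indeed positive. Positivity throughout the whole sequence is preserved because $r$ is positive and the orientation propagates to $r^{\ast}$, dictating the common positive transverse direction in which each intermediate surface~$\widehat{\Pi^{\,\cdot}}$ is pushed by the corresponding move.
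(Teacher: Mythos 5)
Your overall plan --- interleave positive flypes with one ordinary compression per $\theta$-companion and finish with a bubble reduction --- has the right shape, but the step you yourself flag as ``the main obstacle'' is not bookkeeping: it is the entire content of the lemma, and the mechanism you sketch for it does not exist. You propose to clear the gap strip $S$ between $r$ and the adjacent companion $r^{(1)}$ by ``a positive flype inside such a stack'' of rectangles tiling a vertical side of~$S$. Since $\partial\Pi=\varnothing$, $(\theta_1;\theta_2)\cap\Theta(\Pi)=\varnothing$, and no rectangle has the form $[\theta_2;\theta_1]\times[\varphi';\varphi'']$, the rectangles covering, say, the left side of~$S$ necessarily alternate between rectangles lying outside the annulus $[\theta_1;\theta_2]\times\mathbb S^1$ (with $\theta_+=\theta_1$) and rectangles that enter the annulus at~$\theta_1$ and, not being companions (by your adjacency choice), must span all the way across it and past~$\theta_2$. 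Consecutive members of such a stack meet in a single shared vertex at which they occupy opposite quadrants, and non-consecutive members have disjoint $\varphi$-intervals and hence are disjoint. So no two members of one stack intersect in a rectangle, no overlay relation holds between them, and no flype is available ``inside the stack''. Whatever overlays do occur in this picture pair a rectangle attached to one side of the strip with a rectangle entering from the other side, and exhibiting them (and checking their positivity) is exactly what remains to be proved.

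The paper's argument supplies precisely this missing dichotomy by running verbatim the induction of Lemma~\ref{wrinkle-decomp-lem}, on the number~$m$ of rectangles having a side on $\theta_1\times\mathbb S^1$ or $\theta_2\times\mathbb S^1$ rather than on the number of companions. One looks at $r_1,r_2$ attached to $\cabl r$ and $\cabr r$ and at $r_3,r_4$ attached to $\cabr{r_1}$ and $\cabl{r_2}$: either $r_3=r_4$, in which case $r_3$ is a $\theta$-companion of~$r$ and $\{r_1,r,r_2,r_3\}$ together with the gap between $r$ and $r_3$ form exactly the configuration of an ordinary compression; or $r_3\ne r_4$ and one of the overlays ``$r_1$ overlays $r_4$'', ``$r_2$ overlays $r_3$'' must hold, producing a positive flype that involves the main rectangle~$r$ itself and enlarges it while dropping~$m$ by~$2$. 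Your secondary assertions (that each compression leaves a generalized move with one fewer companion, that the bridge rectangle drops out of the relevant set, and that positivity propagates) are consistent with this, but they all rest on the unestablished reduction step, so as written the proposal is a plan rather than a proof.
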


\begin{proof}
The proof is parallel to that of Lemma~\ref{wrinkle-decomp-lem}.
In addition to Cases~(1) and~(2) we may now have~$r_3=r_4$, in which case we apply
an ordinary compression to make the induction step. The details are left to the reader.
\end{proof}

Let~$X,Y\subset\mathbb S^3$ be subsets with~$|X|,|Y|\geqslant3$, and let~$\Pi$
be a rectangular diagram of a surface such that~$\Theta(\Pi)\cap X=\Phi(\Pi)\cap Y=\varnothing$
and~$\partial\Pi=\varnothing$.

\begin{defi}
We say that a rectangle~$r=[\theta_1;\theta_2]\times[\varphi_1;\varphi_2]\in\Pi$
is \emph{$\theta$-thin} (respectively, \emph{$\varphi$-thin}) (with respect to~$T(X,Y)$) if
$(\theta_1;\theta_2)$ is disjoint from~$\Theta(\Pi)\cup X$ (respectively,
$(\varphi_1;\varphi_2)$ is disjoint from~$\Phi(\Pi)\cup Y$).
In both cases we say that it is \emph{thin}.

A rectangle~$r=[\theta_1;\theta_2]\times[\varphi_1;\varphi_2]$ of an oriented
rectangular diagram of a surface~$(\Pi,\epsilon)$ is called \emph{positively pre-thin}
(respectively, \emph{negatively pre-thin}) if either~$(\theta_1;\theta_2)\cap X=\varnothing$ and~$\epsilon(r)=1$
or~$(\varphi_1;\varphi_2)\cap Y=\varnothing$ and~$\epsilon(r)=-1$
(respectively, $(\theta_1;\theta_2)\cap X=\varnothing$ and~$\epsilon(r)=-1$
or~$(\varphi_1;\varphi_2)\cap Y=\varnothing$ and~$\epsilon(r)=1$).

If at least one of the conditions~$(\theta_1;\theta_2)\cap X=\varnothing$
and~$(\varphi_1;\varphi_2)\cap Y=\varnothing$ holds, then~$r$ is called \emph{pre-thin}.
\end{defi}

If~$r\in\Pi$ is a pre-thin rectangle, then the intersection of~$\widehat r$ with a simplex of~$T(X,Y)$ meets
one of the edges twice, so $\widehat\Pi$ is not normal. Conversely, if~$\widehat\Pi$
is not normal and~$\partial\Pi=\varnothing$, then~$\Pi$ contains a pre-thin rectangle.
In this case, we define a normalization procedure for~$\Pi$
as follows.

\begin{defi}\label{normalize-def}
Suppose that~$\Pi$ contains a pre-thin rectangle
with respect to~$T(X,Y)$. Then, by the `innermost' argument, $\Pi$ contains a thin
rectangle~$r=[\theta_1;\theta_2]\times[\varphi_1;\varphi_2]$ and one of the following cases occurs,
in each of which we define a \emph{normalization step}~$\Pi\mapsto\Pi'$.

\smallskip\noindent\emph{Case 1}: $r$ is a part of a trivial spherical component of~$\Pi$. We remove 
this spherical component and take the result for~$\Pi'$.

\smallskip\noindent\emph{Case 2}: either $r$ is $\theta$-thin and has no $\theta$-companion in~$\Pi$,
or~$r$ is $\varphi$-thin and has no $\varphi$-companion in~$\Pi$. In this case, we define~$\Pi\mapsto\Pi'$
to be the corresponding generalized wrinkle reduction.

\smallskip\noindent\emph{Case 3}: either~$r$ is $\theta$-thin and has a $\theta$-companion in~$\Pi$,
or~$r$ is $\varphi$-thin and has a $\varphi$-companion in~$\Pi$. If~$\Pi$ contains also
a rectangle~$r'$ of the form~$[\theta_2;\theta_1]\times[\varphi';\varphi'']$ or~$[\theta';\theta'']\times[\varphi_2;\varphi_1]$,
respectively, we let~$\Pi\mapsto\Pi'$ be the bubble reduction that removes~$r'$. Otherwise, 
we define~$\Pi\mapsto\Pi'$
to be the corresponding generalized compression.

\smallskip
A sequence~$\Pi\mapsto\Pi_1\mapsto\Pi_2\mapsto\ldots\mapsto\Pi_n$
of normalization steps is referred as a \emph{normalization procedure for~$\Pi$}. If~$\widehat\Pi_n$
is normal with respect to~$T(X,Y)$, we say that \emph{$\Pi$ normalizes to~$\Pi_n$}.
A normalization procedure is \emph{positive} (respectively, \emph{negative}) if the transformations
at all steps of the procedure are positive (respectively, negative).
In the case of
a positive (respectively, negative)
normalization we also say that~$\Pi$ normalizes to any other surface obtained from~$\Pi_n$ by a
positive (respectively, negative) deformation.\end{defi}

When~$\Pi$ is finite, any normalization procedure for~$\Pi$ reduces the number of rectangles
in the diagram, hence it eventually stops. However, we are going to use normalization
in the case of infinite rectangular diagrams, which requires more care.

\begin{lemm}\label{only-positive-lem}\emph{(i)}
Suppose that~$\Pi$ contains a positively pre-thin rectangle, but does not contain a negatively pre-thin one. Then any normalization procedure for~$\Pi$ is positive.

\emph{(ii)}
Suppose that~$\Pi$ contains exactly one pre-thin rectangle \emph(which may be $\theta$-thin
and $\varphi$-thin simultaneously\emph), and let~$\Pi\mapsto\Pi'$
be a positive normalization step. Then any normalization procedure for~$\Pi$
starting from this step is positive.

The same statements are true after exchanging `positive' and `negative'.
\end{lemm}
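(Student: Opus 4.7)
The plan is to reduce both parts to a single local claim about each normalization step~$\Pi\mapsto\Pi'$ operating on a thin rectangle~$r$: $(\ast)$ the sign of the step matches the pre-thin route used, i.e., the step is positive precisely when~$r$ is $\theta$-thin with~$\epsilon(r)=+1$ or~$\varphi$-thin with~$\epsilon(r)=-1$; moreover, whenever the step is positive, every rectangle of~$\Pi'\setminus\Pi$ is either not pre-thin or positively pre-thin. Given~$(\ast)$, both parts follow easily. Pre-thinness of a rectangle depends only on the rectangle and the fixed sets~$X,Y$, so every rectangle of~$\Pi\cap\Pi'$ keeps its pre-thin status. Under the hypothesis of~(i), any thin rectangle~$r$ of~$\Pi$ must be positively pre-thin (otherwise the sign of~$\epsilon(r)$ opposite to the one required would make~$r$ itself negatively pre-thin), so the step is positive by~$(\ast)$, and~$\Pi'$ again contains no negatively pre-thin rectangle; the induction continues. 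For~(ii), the prescribed positive first step eliminates the unique pre-thin rectangle of~$\Pi$, and~$(\ast)$ ensures that any pre-thin rectangle of~$\Pi'$ lies in~$\Pi'\setminus\Pi$ and is positively pre-thin, so~$\Pi'$ either satisfies the hypothesis of~(i) or has no pre-thin rectangles at all (in which case the procedure has terminated).

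To establish the sign-matching half of~$(\ast)$, I would unwind the definitions. For the bubble reduction subcase of Case~3 of Definition~\ref{normalize-def}, the thin rectangle~$r$ appears as one of the two rectangles~$r_1,r_2$ inside the newly added~$r_{\mathrm{new}}$, and the positivity criterion~$\epsilon(r_{\mathrm{new}})=-\delta$ of Definition~\ref{bubble_move-def}, combined with $\epsilon(r)=\epsilon(r_{\mathrm{new}})$, yields~$\epsilon(r)=+1$ in the $\theta$-thin case (where~$\delta=-1$) and~$\epsilon(r)=-1$ in the $\varphi$-thin case. For generalized wrinkle reductions and generalized compressions I would invoke Lemmas~\ref{wrinkle-decomp-lem} and~\ref{gen-compres-decomp-lem}, which decompose the step into a uniformly-signed sequence of flypes (preserving orientations) ending with a bubble reduction or ordinary compression whose sign is determined by~$\epsilon(r)$ as above. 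The spherical component removal of Case~1 is handled by the convention, introduced just before Definition~\ref{gen-wrinkle-def}, that assigns either sign to a bordism reducing a sphere.

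The new-rectangles half of~$(\ast)$ is the main technical point; I would argue by contradiction. Assume that some $r'\in\Pi'\setminus\Pi$ is negatively pre-thin, say~$(\theta_-(r');\theta_+(r'))\cap X=\varnothing$ and~$\epsilon'(r')=-1$ (the $\varphi$-narrow, positive case being symmetric). The vertical sides of~$r'$ lie on meridians hosting vertical sides of rectangles $r_k\in\Pi$ adjacent to~$r$ and participating in the step; the $\theta$-range of~$r'$ strictly contains~$(\theta_-(r);\theta_+(r))$, so the $\theta$-range of each such~$r_k$ is contained in $(\theta_-(r');\theta_+(r'))$ and hence disjoint from~$X$. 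Choosing~$r_k$ so that~$\theta_-(r_k)=\theta_-(r')$, the orientation rules~\eqref{orientation1-eq}--\eqref{orientation2-eq} (applied within~$\Pi'$ via any unchanged rectangle sharing this meridian, whose orientation is inherited from~$\Pi$) give~$\epsilon(r_k)=\epsilon'(r')=-1$, so~$r_k$ is itself $\theta$-narrow and negative, hence negatively pre-thin in~$\Pi$---contradicting the hypothesis of~(i). The main obstacle is carrying out this analysis uniformly over all four cases of Definition~\ref{normalize-def}; the generalized wrinkle reduction of Case~2 is the most intricate, since the parity formula of Definition~\ref{gen-wrinkle-def} permits several shapes for the new rectangles, and one has to verify that each new~$r'$ inherits its two vertical sides from rectangles~$r_k\in\Pi$ whose $\theta$-coordinates and signs are fixed by the orientation rules, with no spurious configuration escaping the analysis.
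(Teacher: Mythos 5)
Your proposal is correct and follows essentially the same route as the paper: the paper's proof consists precisely of stating your two halves of~$(\ast)$ (a positive/negative step requires a positively/negatively pre-thin rectangle, and a positive step never creates a negatively pre-thin one) and leaving the case-by-case verification to the reader, which you sketch in somewhat more detail.
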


\begin{proof}
One establishes, by a routine check of several cases, that:\begin{enumerate}
\item
to admit a positive (respectively, negative) normalization step an oriented rectangular diagram must
have a positively (respectively, negative) pre-thin rectangle;
\item
 a positive normalization step never
produces a negatively pre-thin rectangles.
\end{enumerate}
We leave it to the reader.
\end{proof}

For a short while, we forget about triangulations and prove the following statement.

\begin{lemm}\label{no-compres-lem}
Let~$\Pi_0\mapsto\Pi_1\mapsto\Pi_2\mapsto\ldots\mapsto\Pi_{n-1}\mapsto\Pi_n$ be a sequence of
positive moves that may include flypes, bubble moves, compressions, and removals of
trivial spherical components, and let~$M$ be a submanifold of~$\mathbb S^3$ containing
all the bordisms associated with the moves~$\Pi_{i-1}\mapsto\Pi_i$. Suppose that the surfaces~$\widehat\Pi_0$ and~$\widehat\Pi_n$
are connected and incompressible in~$M$ or homeomorphic to the two-sphere.
Suppose also that the interiors of the bordisms associated with the moves~$\Pi_{i-1}\mapsto\Pi_i$, $i=1,\ldots,n$,
are pairwise disjoint.

Then there is another sequence~$\Pi_0\mapsto\Pi_1'\mapsto\Pi_2'\mapsto\ldots\mapsto\Pi_{n-1}'\mapsto\Pi_n$
of positive moves that include only flypes and bubble moves, and is such that
the union of the bordisms associated with the new sequence is the same
as that for the original one.
\end{lemm}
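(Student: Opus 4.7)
\emph{Plan.} I argue by induction on the number $k$ of compressions and trivial spherical component removals in the sequence. The base case $k=0$ is immediate. For $k>0$, the key structural fact to establish is that every compression in the sequence creates a trivial spherical component, that each such sphere is removed by a later trivial spherical removal, and that each resulting compression--removal pair can be replaced by a sequence of positive flypes and bubble moves preserving the total union of bordisms.

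I first show that compressions only produce trivial spheres. Consider the first compression $\Pi_{i-1}\mapsto\Pi_i$ in the sequence. Since all moves at steps $1,\ldots,i-1$ are flypes or bubble moves, Lemmas~\ref{bubble_cavity-lem} and~\ref{flype-hol-lem} imply that the surfaces $\widehat\Pi_0,\widehat\Pi_1,\ldots,\widehat\Pi_{i-1}$ are pairwise ambient isotopic in $M$. Hence $\widehat\Pi_{i-1}$ is incompressible in $M$ or homeomorphic to $\mathbb S^2$. The compressing disc $d$ for the move $\Pi_{i-1}\mapsto\Pi_i$ lies in its bordism and hence in $M$, so incompressibility forces $\partial d$ to bound a disc $d'\subset\widehat\Pi_{i-1}$ with $d\cup d'$ enclosing a 3-ball in $M$. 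Consequently the compression splits off a trivial spherical component bounding that 3-ball. Iterating the argument -- each intermediate surface being isotopic in $M$ to $\widehat\Pi_0$ together with a family of trivial spheres, and hence retaining the needed incompressibility -- shows that \emph{every} compression in the sequence creates a trivial sphere. Since $\widehat\Pi_n$ has no extra spherical components, each such sphere must be removed by a later trivial spherical removal, giving a canonical pairing.

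Let $(\Pi_{i-1}\mapsto\Pi_i,\,\Pi_{j-1}\mapsto\Pi_j)$ be a paired compression--removal with $i<j$. By the disjoint-interior hypothesis no intermediate move can disturb the trivial sphere, so its bounding 3-ball fits together with the compression's bordism along the compressing disc to form a single 3-ball $B\subset M$. Tracing the net change on the diagram, one sees that the pair has the effect of replacing an annular piece of $\widehat\Pi_{i-1}$ (together with an adjacent disc tile that combines with one of the new tiles to produce the trivial sphere) by a single disc tile bounded by the same outer circle; topologically this is an ambient isotopy of a disc through the 3-ball $B$. By Lemmas~\ref{bubble_cavity-lem} and~\ref{flype-hol-lem}, which identify bubble moves and flypes as precisely the diagrammatic realisations of positive leaps over 3-ball cavities, this isotopy can be realised by a sequence of positive bubble moves and flypes supported on $B$. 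Replacing the pair by such a sequence yields a new positive sequence with one fewer compression and one fewer spherical removal and with unchanged union of bordisms; the disjoint-interior hypothesis guarantees that the other moves of the sequence, being supported outside $B$, remain valid. The induction hypothesis then concludes the proof.

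The main obstacle will be the explicit construction of the replacement sequence of bubble moves and flypes realising the isotopy through $B$: the compression has a specific four-rectangle configuration, and the trivial sphere has a specific two-rectangle structure that must be ``threaded'' into the bubble-move sequence. A case analysis depending on the combinatorial position of the spherical component's two rectangles relative to the four compression rectangles will be required; the arithmetic of rectangle counts (each bubble reduction has net count $-2$, while a compression--removal pair has net count $-4$) suggests that two bubble reductions should suffice per pair, providing guidance for the construction.
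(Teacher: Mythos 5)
Your overall reading of the situation is right -- incompressibility forces every compression to split off a sphere, that sphere must eventually be discarded, and the net effect of a compression together with the disappearance of its sphere is an isotopy through a ball that should be realizable by flypes and bubble moves. But there are two genuine gaps. First, you conflate the topological sphere split off by a compression with the paper's notion of a \emph{trivial} spherical component, which by definition consists of exactly two rectangles. A compression creates a spherical component that may consist of arbitrarily many rectangles; the matching ``removal of a trivial spherical component'' can therefore occur only much later, after intermediate flypes and bubble moves have shrunk that sphere down to two rectangles (and possibly acted on the main component as well). So your ``canonical pairing'' pairs moves that are in general far apart in the sequence, and your replacement step -- which glues the compression's bordism to the sphere's three-ball along the compressing disc and treats the result as a single ball $B$ ``undisturbed'' by intermediate moves -- does not go through: intermediate moves necessarily act inside that region precisely in order to reduce the sphere to two rectangles. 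Relatedly, once several compressions have occurred, a later compressing disc may bound, together with a disc on the surface, a ball containing earlier sphere components, so the assertion that each intermediate surface ``retains the needed incompressibility'' and that each new sphere is trivial needs an argument rather than a parenthetical.

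Second, the step you explicitly defer -- the explicit construction of the replacement sequence of flypes and bubble moves -- is the actual content of the lemma, and your rectangle-count heuristic only covers the easiest configuration (a trivial spherical removal immediately following the compression, which the paper replaces by two consecutive bubble reductions). The paper's proof works differently: it takes the \emph{last} compression in the sequence and commutes it forward past the move that follows it (this always works for a subsequent flype, and for most bubble moves), inducting on the pair (number of compressions, number of moves after the last compression). Only three interactions resist commutation, and the hardest one -- a compression followed by a chain of bubble reductions consuming a ``thin tube'', where the compression must be re-chosen at a different point along the tube -- is invisible in your sketch. Without some such commutation mechanism you cannot reduce your general, widely separated compression--removal pair to the adjacent case your count addresses, so as written the proposal does not yet constitute a proof.
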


\begin{proof}
The proof is by induction in the pair~$(k,l)$, where~$k$ is the number of compressions
in the sequence, and~$l$ is the number of moves occurring after the last compression (if~$k=0$, then~$l$
is also set to zero). In the case~$k=0$ there is nothing to prove, this is the induction base.

The point is that, in most cases, compressions can be postponed while
other transformations moved forward.
More precisely, let~$\Pi_i\mapsto\Pi_{i+1}$ be the last compression in the original sequence of moves.
It must be followed by another move~$\Pi_{i+1}\mapsto\Pi_{i+2}$, since~$\widehat\Pi_n$ is connected and~$\widehat\Pi_0$ is incompressible.
In most cases,
the moves can be exchanged in the sense that there exists a move~$\Pi_i\mapsto\Pi_{i+1}'$ of the same
type as~$\Pi_{i+1}\mapsto\Pi_{i+2}$ such that~$\Pi_{i+1}'\mapsto\Pi_{i+2}$ is a compression. For instance,
this is always the case when~$\Pi_{i+1}\mapsto\Pi_{i+2}$ is a flype. (It is crucial here that
all moves in question are positive, and the bordisms associated to them have pairwise disjoint
interiors.)

There are only three exceptions in which such `commutation' of moves does not apply, namely,
when $\Pi_{i+1}\mapsto\Pi_{i+2}$ is one of the following:
\begin{enumerate}
\item
a removal of a trivial spherical component created by the compression;
\item
a bubble creation move that splits one of the rectangles created by the compression;
\item
a bubble reduction move that removes one of the rectangles created by the compression.
\end{enumerate}
In the first case, one can obtain~$\Pi_{i+2}$ by two consecutive bubble reduction moves.

In the second case, $\Pi_{i+2}$ is obtained by first applying a generalized wrinkle creation move,
and then a compression.

In the third case, the diagram~$\Pi_i$ admits two positive compressions, each of which
can be followed by a bubble reduction. Find the longest subsequence~$\Pi_i\mapsto\Pi_{i+1}\mapsto\ldots\mapsto\Pi_j$ 
in which every move except~$\Pi_i\mapsto\Pi_{i+1}$ is a bubble reduction removing a rectangle
created at the previous step. Geometrically, this means that~$\widehat\Pi_i$ has a `thin tube of length~$j-i-1$'
that can be cut at~$j-i$ different places by a compression, and the rest of the tube
is shrunk by bubble reduction moves. We then look at the move~$\Pi_j\mapsto\Pi_{j+1}$ and
proceed similarly to the previous cases viewing the removal of the entire tube
as a single operation. This gives the induction step.

One can see that every `exchange of moves' preserve the union of bordisms associated to them.
\end{proof}

Now we make use of a powerful tool of the normal surface theory, which is almost normal surfaces,
and look at them from the point of view of the rectangular diagram context. Almost normal surfaces
were introduced by H.\,Rubinstein in~\cite{rubinstein}. We utilize the version of the concept
that was used by A.\,Thompson in~\cite{thompson} and the idea of thin position, which is due to D.\,Gabai~\cite{gabai-87}.

Recall that a surface~$F$ in a three-manifold~$M$ is said to be \emph{almost normal} with respect to a
triangulation~$T$ of~$M$ if any connected component of the intersection of~$F$ with any three-simplex of~$T$
is a normal disc, with exactly one exception which is a disc intersecting the one-skeleton of~$T$
exactly eight times in such a way that four edges of the three-simplex containing this disc
are met once, and the other two edges twice. Such a disc is referred as an \emph{octagon}.
The notion of \emph{normal isotopic} surfaces is extended to almost normal surfaces
accordingly.

\begin{prop}\label{almost-norm-rect-prop}
Let~$X,Y\subset\mathbb S^1$ be finite susbsets with~$|X|,|Y|\geqslant3$, and let~$F\subset\mathbb S^3$
be an almost normal oriented surface with respect to~$T(X,Y)$ such that every connected component of~$F$
has a non-empty intersection with both circles~$\mathbb S^1_{\tau=0}$ and~$\mathbb S^1_{\tau=1}$. Then one of
the following holds true\emph:
\begin{enumerate}
\item
$F$ is normal isotopic to a surface of the form~$\widehat\Pi$, where~$\Pi$ is a rectangular diagram of a surface
having exactly one pre-thin rectangle, which is both $\theta$-thin and $\varphi$-thin\emph;
\item
there exist a surface~$F'$ normal isotopic to~$F$ and a positive flype~$\Pi\mapsto\Pi'$ such that~$F'$
is isotopic to~$\widehat\Pi$ and~$\widehat\Pi'$ and
contained in the bordism associated with the flype. Moreover, $\Pi$ has no positively pre-thin rectangles, and~$\Pi'$
has no negatively pre-thin rectangles.
\end{enumerate}
\end{prop}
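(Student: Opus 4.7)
The plan is to extend the argument of Proposition~\ref{normal=>rect-prop} to handle an almost normal surface. Since $F$ is almost normal, exactly one connected component of its intersection with some 3-simplex $\Delta_0 = [\theta'; \theta''] * [\varphi'; \varphi'']$ is an octagonal disc $D$; all other components of intersections of $F$ with 3-simplices are normal discs. First I would put $F$ in a standard form by a normal isotopy, analogously to the standardization in Proposition~\ref{normal=>rect-prop}: endow each 3-simplex with an affine structure in which every arc $\widehat v$, $v\in\mathbb T^2$, is a straight segment, arrange that each non-octagonal normal disc of $F$ is either an affine triangle or a union of two affine triangles sharing an arc of the form $\widehat v$, and standardize $D$ so that its slicing arcs against the vertical edges of $\Delta_0$ likewise have the form $\widehat v$. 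Then I would cut $F$ along all these arcs $\widehat{v_i}$ and analyze the resulting pieces.

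Away from $D$, each piece compactifies to a rectangle in $\mathbb T^2$ via the locally isometric immersion $\omega$, exactly as in Proposition~\ref{normal=>rect-prop}. The key new step is the compactification of the piece $V_0$ containing $D$. The octagon crosses a pair of opposite edges of $\Delta_0$ twice; there are three such pairs (two pairs of diagonally opposite vertical edges of the form $\widehat v$, and one pair consisting of the top arc in $\mathbb S^1_{\tau=1}$ and the bottom arc in $\mathbb S^1_{\tau=0}$), giving three octagon types. For each type I would determine the compactification of $V_0$ and relate the resulting configuration to a rectangular diagram.

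For the two types whose twice-crossed edges are pairs of diagonally opposite vertical edges $\widehat v$, the piece $V_0$ admits two natural resolutions into pairs of rectangles, obtained by choosing one of two directions in which to resolve the octagon (equivalently, tubing in one of two ways along the twice-crossed edges). The two resolutions produce rectangular diagrams $\Pi$ and $\Pi'$ that differ by a single flype involving an overlaying pair $r_1, r_4$ on one side and $r_4', r_1'$ on the other; the standardized surface $F'$ sits in the bordism associated with this flype. Choosing the resolution consistent with the given co-orientation of $F$ ensures the flype is positive; since the octagon is the only obstruction to normality, all other rectangles in $\Pi$ and $\Pi'$ satisfy the normality condition of Proposition~\ref{rect=>norm-prop}, which together with the orientation analysis yields that $\Pi$ has no positively pre-thin rectangles and $\Pi'$ has no negatively pre-thin rectangles. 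This gives case~(2). In the remaining type, where the twice-crossed edges are the top and bottom arcs, the piece $V_0$ compactifies to a single small rectangle $r_*$ lying entirely inside the grid cell $r_{\Delta_0}$; this $r_*$ is the unique pre-thin rectangle of $\Pi$ and is both $\theta$-thin and $\varphi$-thin by construction, giving case~(1).

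The main obstacle is the detailed geometric analysis of how the compactification argument from Proposition~\ref{normal=>rect-prop} adapts to the octagonal piece in each of the three types, and in particular showing that in the vertical-pair types the two resolutions are genuinely related by a positive flype whose bordism contains $F'$. A secondary challenge is the orientation bookkeeping required to certify that the flype in case~(2) is positive and that each of $\Pi$, $\Pi'$ avoids pre-thin rectangles of the wrong sign; this depends on carefully tracking how the co-orientation of $F$ across $D$ propagates through the resolution.
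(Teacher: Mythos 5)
Your proposal follows essentially the same route as the paper's proof: the same standardization by normal isotopy, the same classification of octagons according to which pair of opposite edges of~$\Delta_0$ is met twice, the same assignment of the top--bottom type to conclusion~(1) and of the two diagonal types to conclusion~(2), and, for the latter, the same mechanism of realizing the two rectangular resolutions of the octagon's hexagonal shadows as the two sides of a positive flype whose bordism contains~$F'$. (A small quibble: each of the two hexagonal shadows is an L-shaped union of \emph{three} rectangles, namely~$r_1\cup r_*\cup r_3$ and~$r_4\cup r_*\cup r_2$ with~$r_*$ shared, matching Definition~\ref{flype_move-def}, rather than a pair.)

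One statement in your treatment of the top--bottom type is incorrect as written: the octagonal piece cannot compactify to a single rectangle~$r_*$ lying strictly inside~$r_{\Delta_0}$. The tile of such a rectangle meets the one-skeleton of~$T(X,Y)$ only four times (twice on each of the edges~$\Delta_0\cap\mathbb S^1_{\tau=1}$ and~$\Delta_0\cap\mathbb S^1_{\tau=0}$), whereas an octagon must in addition meet each of the four vertical edges of~$\Delta_0$ exactly once. The correct local picture consists of a central rectangle~$r_*\subset r_{\Delta_0}$ together with portions of four further rectangles of~$\Pi$, each sharing a corner with~$r_*$ and covering one corner of~$r_{\Delta_0}$; only~$r_*$ is pre-thin, so your conclusion survives, but the compactification analysis must account for these four extra pieces. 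The paper avoids this analysis altogether: it inserts one new vertex~$\theta_0$ (respectively,~$\varphi_0$) separating the two intersection points of the octagon with~$\mathbb S^1_{\tau=1}$ (respectively,~$\mathbb S^1_{\tau=0}$), observes that~$F$ is then normal with respect to~$T(X\cup\{\theta_0\},Y\cup\{\varphi_0\})$, and applies Proposition~\ref{normal=>rect-prop} verbatim; the unique pre-thin rectangle relative to the original~$T(X,Y)$ is the one whose horizontal and vertical sides straddle~$\theta_0$ and~$\varphi_0$. You may want to adopt this shortcut, or else redo the completion argument of Proposition~\ref{normal=>rect-prop} for the five-piece configuration explicitly.
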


\begin{proof}
Let~$\Delta=[\theta_1;\theta_2]*[\varphi_1;\varphi_2]$ be the simplex of~$T(X,Y)$
containing the octagon of~$F$.
There are two cases to consider.

\smallskip\noindent\emph{Case 1}: the octagon of~$F$ intersects each of the circles~$\mathbb S^1_{\tau=0,1}$
twice. Let~$\theta_0\in(\theta_1;\theta_2)$ and~$\varphi_0\in(\varphi_1;\varphi_2)$
be points on~$\mathbb S^1_{\tau=1}$ and~$\mathbb S^1_{\tau=0}$, respectively, separating the two
intersections of the boundary of the octagon with the respective arcs. Then the surface~$F$
is normal isotopic (with respect to~$T(X,Y)$) to a surface~$F'$ that is normal with respect to~$T(X\cup\{\theta_0\},Y\cup\{\varphi_0\})$;
see Figure~\ref{octagon-1-fig}.
\begin{figure}[ht]
 \includegraphics[scale=.5]{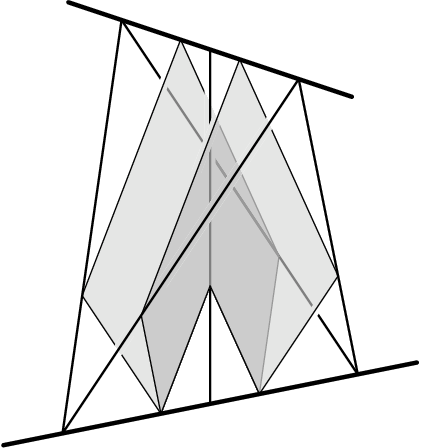}
 \put(-89,-4){$\varphi_1$}\put(-18,11){$\varphi_2$}\put(-53,3){$\varphi_0$}
 \put(-73,106){$\theta_1$}\put(-30,92){$\theta_2$}\put(-52,99){$\theta_0$}
 \hskip2cm    \includegraphics[scale=1.8]{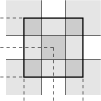}
 \put(-70,-8){$\theta_1$}\put(-44,-8){$\theta_0$}\put(-18,-8){$\theta_2$}
 \put(-99,20){$\varphi_1$}\put(-99,45){$\varphi_0$}\put(-99,70){$\varphi_2$}
\caption{The octagon in Case~1}\label{octagon-1-fig}
\end{figure}
By Proposition~\ref{normal=>rect-prop}, the surface~$F$ is normal isotopic to a surface of the form~$\widehat\Pi$,
where~$\Pi$ is a rectangular diagram of a surface. One can see that~$\Pi$ will have exactly
one pre-thin rectangle, which is both $\theta$-thin and $\varphi$-thin.

\smallskip\noindent\emph{Case 2}: the octagon of~$F$ intersects each of the circles~$\mathbb S^1_{\tau=0,1}$
just once. We proceed similarly to the proof of Proposition~\ref{normal=>rect-prop}.
The octagon can also be assumed to contain an arc of the form~$\widehat v$, which
cuts it into halves. Unlike the case of a vertical normal disc, the projection of each half
on~$\mathbb T^2$ is not a rectangle, but a `boomerang-shaped' hexagon;
see Figure~\ref{octagon-2-fig}.
\begin{figure}[ht]
\includegraphics[scale=.5]{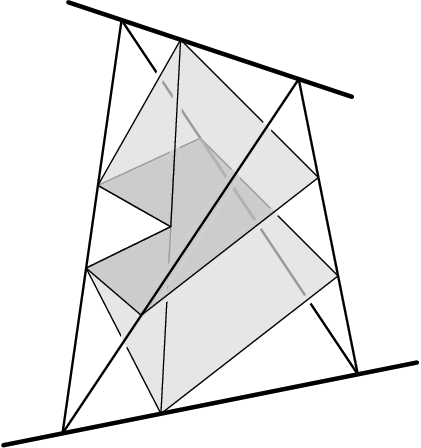}
 \put(-89,-4){$\varphi_1$}\put(-18,11){$\varphi_2$}\put(-66,1){$\varphi_0$}
 \put(-73,106){$\theta_1$}\put(-30,92){$\theta_2$}\put(-58,101){$\theta_0$}
\hskip1cm
\includegraphics[scale=1]{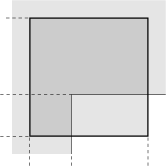}
 \put(-70,-8){$\theta_1$}\put(-49,-8){$\theta_0$}\put(-12,-8){$\theta_2$}
 \put(-92,13){$\varphi_1$}\put(-92,33){$\varphi_0$}\put(-92,70){$\varphi_2$}
\hskip1cm
\includegraphics[scale=1]{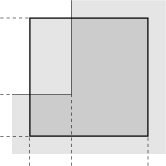}
 \put(-70,-8){$\theta_1$}\put(-49,-8){$\theta_0$}\put(-12,-8){$\theta_2$}
 \put(-92,13){$\varphi_1$}\put(-92,33){$\varphi_0$}\put(-92,70){$\varphi_2$}
\caption{The octagon in Case~2 and the projections of its two halves on~$\mathbb T^2$}\label{octagon-2-fig}
\end{figure}
Thus, for every connected component~$V$ of~$F\setminus\bigcup_i\alpha_i$ with two exceptions,
one can define a rectangle~$r(V)$ exactly as in the proof of Proposition~\ref{normal=>rect-prop}.
For two exceptional components, the same procedure produces non-convex
hexagons having five right angles. These hexagons intersect as shown in Figure~\ref{boomerangs-fig}.
\begin{figure}[ht]
\includegraphics{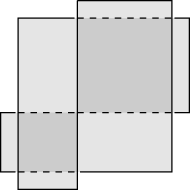}
\caption{The exceptional hexagonal regions}\label{boomerangs-fig}
\end{figure}

Now let~$\Pi\mapsto\Pi'$ be a flype such that the common part of~$\Pi$ and~$\Pi'$ is the set of all
rectangles of the form~$r(V)$, where~$V$ runs over the set of all non-exceptional connected
components of~$F\setminus\bigcup_i\alpha_i$, and the `boomerangs' are,
in the notation of Definition~\ref{flype_move-def}, the following union of rectangles:
$$r_1\cup r_* \cup r_3 = r_1'\cup r_*'\cup r_3'\text{ and }r_4\cup r_*\cup r_2 = r_4'\cup r_*' \cup r_2'.$$

It is not hard to verify that~$F$ is now isotopic to a surface contained
in the bordism associated to the flype~$\Pi\mapsto\Pi'$. We choose the orientation of~$\Pi$
and~$\Pi'$ so that it agrees with that of~$F$ on~$\Pi\cap\Pi'$. Exchanging~$\Pi$
and~$\Pi'$ if necessary, we may assume that the flype~$\Pi\mapsto\Pi'$ is positive.

By construction, the only rectangles of~$\Pi$ and~$\Pi'$ that may be pre-thin are those that are involved
in the flype, namely, this can only be rectangles~$r_2$, $r_3$, $r_2'$, and~$r_3'$, in the notation of Definition~\ref{flype_move-def}.
It is an easy check that the rectangles~$r_2,r_3\in\Pi$ can only be negatively pre-thin, and~$r_2',r_3'\in\Pi'$
can only be positively pre-thin.
\end{proof}

\begin{prop}\label{shrink-sphere-prop}
Let~$\Pi$ be an oriented rectangular diagram of a two-sphere. Then there is a sequence~$\Pi=\Pi_0\mapsto\Pi_1\mapsto\Pi_2\mapsto\ldots
\mapsto\varnothing$ of positive moves that may include flypes, bubble  moves, compressions, removals of trivial spherical components, and inverses of compressions and trivial spherical component removals.
\end{prop}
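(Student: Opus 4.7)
The plan is to combine the normalization procedure of Definition~\ref{normalize-def} with a thin-position sweepout, in the style of Rubinstein and Gabai--Thompson, of the ball bounded by $\widehat\Pi$ on its positive side. Call this ball $B^+$; the strategy is to push $\widehat\Pi$ across $B^+$ and collapse it to a point, realizing every geometric step by one of the allowed positive diagram moves via the correspondences given by Propositions~\ref{rect=>norm-prop}, \ref{normal=>rect-prop}, and~\ref{almost-norm-rect-prop}.

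First I would pick finite subsets $X, Y \subset \mathbb S^1$ with $|X|, |Y| \geq 3$, $X \cap \Theta(\Pi) = \varnothing = Y \cap \Phi(\Pi)$, and refine them if needed so that $\Pi$ has no negatively pre-thin rectangle with respect to $T(X, Y)$; this can be arranged by a preliminary small positive deformation of $\Pi$, which adds nothing to the move sequence. By Lemma~\ref{only-positive-lem}(i) any resulting normalization procedure is positive, and by Lemmas~\ref{wrinkle-decomp-lem} and~\ref{gen-compres-decomp-lem} each of its steps decomposes into positive flypes, positive bubble moves, positive compressions, and positive removals of trivial spherical components. Since $\Pi$ is finite, this procedure terminates, ending either at $\varnothing$---in which case we are done---or at a diagram $\Pi'$ with $\widehat{\Pi'}$ a normal $2$-sphere with respect to $T(X, Y)$.

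Next I would sweep $B^+$ out by a one-parameter family of embedded $2$-spheres $\{S_t\}_{t \in [0, 1]}$ with $S_0 = \widehat{\Pi'}$ and $S_1$ a tiny sphere around an interior point of $B^+$, put into thin position so that $S_t$ is normal with respect to a suitable refinement of $T(X, Y)$ for all but finitely many critical times $t_1 < \ldots < t_m$, at which each $S_{t_i}$ is almost normal. On each regular subinterval consecutive spheres are normal parallel, so by Proposition~\ref{normal=>rect-prop} the associated rectangular diagrams differ by a positive deformation and finitely many positive bubble moves. At each critical time, Proposition~\ref{almost-norm-rect-prop} realizes the transition either as a positive flype (its case~(2)) or as a diagram carrying a single pre-thin rectangle admitting a positive normalization step; in the latter situation the corresponding saddle in the sweepout yields either a positive compression, a positive trivial spherical component removal, or the inverse of one of these, according to which side of $S_{t_i}$ the octagon compresses into. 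When $S_t$ finally becomes a tiny sphere around the interior point of $B^+$, the diagram is a trivial spherical component, which we remove to reach $\varnothing$.

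The main obstacle is the case-by-case combinatorial-geometric matching at the critical times: one must verify that every almost-normal saddle tangency in the sweepout translates to \emph{exactly} one of the allowed positive moves, using carefully that $B^+$ lies on the positive side of every intermediate sphere, so that a compression of $S_{t_i}$ into $B^+$ is \emph{positive}, while a compression into the negative complement appears on the diagram side as the inverse of a (negative) compression or trivial spherical component removal---and both of these are explicitly permitted in the statement. A secondary technical point is that the refinement of $T(X, Y)$ may have to be enlarged at each critical moment to accommodate the octagon, and one must check that the normal-parallel motion between consecutive critical times stays within the positive regime of Lemma~\ref{only-positive-lem}, which can be enforced by inserting auxiliary positive bubble moves to re-align the diagram combinatorics with the sweepout.
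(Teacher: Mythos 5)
Your overall toolkit is the right one, but the organizing device---a thin-position sweepout $\{S_t\}_{t\in[0;1]}$ of $B^+$ with every regular level normal and every critical level almost normal with respect to (a refinement of) $T(X,Y)$---is not established by anything you cite, and it is the crux of the matter. Rubinstein--Thompson produce a \emph{single} almost normal sphere in a ball only when the interior of that ball contains no normal spheres other than boundary-parallel and vertex-linking ones; they do not produce a monotone family of normal/almost-normal levels sweeping the ball down to a point. If $B^+$ contains interior normal spheres (vertex-linking spheres around vertices of $T(X,Y)$ inside $B^+$, or nested normal spheres), no such sweepout exists for the fixed triangulation, and ``enlarging the refinement at each critical moment'' does not repair this, since refining changes which earlier levels are normal. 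This is exactly the obstruction the paper's proof is built around: it inducts on the maximal number of pairwise disjoint, pairwise normally non-parallel normal spheres in $\interior B$ (with $X,Y$ chosen to minimize it), strictly decreasing this complexity either by a positive bubble creation move that jumps $\widehat\Pi$ over an interior vertex followed by a positive normalization, or, when there are no interior vertices, by applying Lemma~\ref{connected-sum-lem} to an \emph{innermost} interior normal sphere $\widehat\Sigma$ to perform the inverse of a negative compression and then normalize positively. Only in the base case ($c(\Pi)=0$) is Thompson's almost normal sphere invoked, and there Proposition~\ref{almost-norm-rect-prop} is used not as a critical level of a sweepout but as a bridge: it yields diagrams $\Pi',\Pi''$ with $\Pi'$ normalizing \emph{negatively} back to $\Pi$, $\Pi''$ normalizing \emph{positively} to $\varnothing$, and $\Pi'\mapsto\Pi''$ either trivial or a positive flype; Lemmas~\ref{wrinkle-decomp-lem} and~\ref{gen-compres-decomp-lem} then convert everything into the allowed moves. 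Your proposal needs either this induction or an independent proof of the sweepout claim; as written the gap is real.

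A secondary point: your preliminary step is off. Whether a rectangle is (negatively) pre-thin depends on whether its $\theta$- or $\varphi$-extent misses $X$ or $Y$, so a ``small positive deformation'' of $\Pi$ relative to fixed $X,Y$ does not remove negatively pre-thin rectangles. The correct (and simpler) move is the one in the paper: choose $X,Y$ so that every rectangle of $\Pi$ contains a point of $X\times Y$, whence $\widehat\Pi$ is already normal by Proposition~\ref{rect=>norm-prop} and there are no pre-thin rectangles at all; the initial normalization phase of your argument is then unnecessary.
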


\begin{proof}
Let~$B\subset\mathbb S^3$ be the three-ball defined by~$\partial B=-\widehat\Pi$.
Choose a triangulation of~$\mathbb S^3$ of the form~$T(X,Y)$ so that~$\widehat\Pi$ is normal
with respect to it. Define \emph{the complexity}~$c(\Pi)$ of~$\Pi$ as the maximal number~$n$
of normal two-spheres~$S_1,S_2,\ldots,S_n\subset B{}\setminus\partial B$ that are pairwise disjoint, pairwise normally non-parallel,
and not normally parallel to~$\widehat\Pi$. We assume that~$X$ and~$Y$ are chosen to minimize
this complexity.

We proceed by induction in the just defined complexity of~$\Pi$.
Suppose that~$c(\Pi)=0$. According to Thompson~\cite{thompson} there is an almost normal
sphere~$S$ contained in~$B$. It follows from Proposition~\ref{almost-norm-rect-prop} and Lemma~\ref{only-positive-lem}
that there are rectangular diagrams~$\Pi'$ and~$\Pi''$ such that the following holds:
\begin{enumerate}
\item
$\Pi'$ normalizes negatively to~$\Pi$ (or~$\widehat\Pi'$ is normal parallel to~$\widehat\Pi$ from the beginning);
\item
$\Pi''$ normalizes positively to~$\varnothing$;
\item
either~$\Pi'=\Pi''$ or~$\Pi'\mapsto\Pi''$ is a positive flype.
\end{enumerate}
The claim now follows from Lemmas~\ref{wrinkle-decomp-lem} and~\ref{gen-compres-decomp-lem}.
The induction base is done.

Now suppose that~$c(\Pi)>0$. There are two cases to consider.

\smallskip\noindent\emph{Case 1}: $B$ contains a vertex of~$T(X,Y)$. Then there is a positive
bubble creation move~$\Pi\mapsto\Pi'$ that results in~$\widehat\Pi$ jumping over
a vertex of~$T(X,Y)$. The diagram~$\Pi'$ may have positively pre-thin rectangles (but
not negatively pre-thin ones). In this case, let it normalize positively
to some diagram~$\Pi''$. Otherwise, put~$\Pi''=\Pi'$. 

The diagram~$\Pi''$ will have smaller complexity, so the induction step follows from
Lemmas~\ref{wrinkle-decomp-lem} and~\ref{gen-compres-decomp-lem}.

\smallskip\noindent\emph{Case 2}: $B$ contains no vertices of~$T(X,Y)$. Let~$S$ be an innermost normal
two-sphere contained in~$B$. Since there are no vertices of the triangulation in~$B$, this sphere
must intersect both circles~$\mathbb S^1_{\tau=0}$ and~$\mathbb S^1_{\tau=1}$. Therefore,
we may assume that~$S=\widehat\Sigma$, where~$\Sigma$ is a rectangular diagram of a surface.

$S=\widehat\Sigma$ bounds a ball~$B'\subset B$, which we endow with the induced orientation from~$\mathbb S^3$.
Orient~$\Sigma$ so that~$\partial B'=\widehat\Sigma$.

By  Lemma~\ref{connected-sum-lem} there is a rectangular diagram of a surface~$\Pi'$ such that~$\widehat\Pi'$
is contained in~$\overline{B\setminus B'}$ and~$\Pi'\mapsto\Pi\cup\Sigma$ is a negative compression.
By the induction base, the negative removal of a spherical component~$\Pi\cup\Sigma\mapsto\Pi$
can be decomposed into negative moves that include flypes, bubble moves, compressions, and removals of
trivial spherical components. If the surface~$\widehat\Pi'$ is normal with respect to~$T(X,Y)$, then we put~$\Pi''=\Pi'$.
Otherwise, normalize~$\Pi'$ to another diagram~$\Pi''$. $\Pi'$ can have only positively pre-thin
rectangles, hence the normalization is positive. The diagram~$\Pi''$ is clearly simpler than~$\Pi$, which gives the induction step in this case.
\end{proof}

Proposition~\ref{shrink-sphere-prop} and Lemma~\ref{no-compres-lem} imply the following statement.

\begin{prop}\label{only-flypes-bubble-prop}
\emph{(i)} Let~$\Pi$ be an oriented rectangular diagram of a two-sphere. Then there is a sequence
of positive flypes, positive bubble moves, and a single positive removal of a trivial component that produces~$\varnothing$
from~$\Pi$.

\emph{(ii)}
Let~$\Pi$ be a rectangular diagram of a connected surface, and let~$\Pi\mapsto\Pi'$ and~$\Pi'\mapsto\Pi''$
be a positive compression and a positive removal of a spherical component, respectively. Then one
can obtain~$\Pi''$ from~$\Pi$ by means of a sequence of positive flypes and positive bubble moves.
\end{prop}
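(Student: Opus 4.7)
The claim follows by combining Proposition~\ref{shrink-sphere-prop} with Lemma~\ref{no-compres-lem}.

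For part (i), I apply Proposition~\ref{shrink-sphere-prop} to $\Pi$ to obtain a sequence of positive moves reducing it to $\varnothing$. A priori this sequence may include compressions, trivial spherical component removals, inverses of compressions, and inverses of trivial spherical component removals. Because the endpoint is $\varnothing$, every move that \emph{increases} complexity (the two inverse types) must be matched later by a forward move that undoes it. Using the commutation-and-cancellation technique from the proof of Lemma~\ref{no-compres-lem}, I pair up each such inverse with its matching forward move and cancel them, arriving at a sequence consisting only of positive flypes, positive bubble moves, positive compressions, and positive trivial sphere removals. The final move must be a trivial sphere removal, since the diagram immediately preceding $\varnothing$ consists of exactly two rectangles. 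The sub-sequence ending at this trivial sphere has both endpoints being sphere diagrams, so Lemma~\ref{no-compres-lem} (using the ``sphere'' alternative in its hypothesis with $M=\mathbb S^3$) converts it to a sequence of only positive flypes and positive bubble moves. Appending the final trivial sphere removal completes (i).

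For part (ii), the key observation is that the removed spherical component bounds a three-ball disjoint from the rest of $\widehat\Pi'$, so the composition $\widehat\Pi\mapsto\widehat\Pi'\mapsto\widehat\Pi''$ is ambient isotopic. I apply part (i) to the isolated spherical component $\Sigma$ of $\Pi'$ alone, obtaining a sequence from $\Pi'$ to $\Pi''$ consisting of positive flypes, positive bubble moves, and one final positive trivial sphere removal, all localized to the region containing $\Sigma$ and hence not interacting with the other rectangles of $\Pi'$. Prepending the compression $\Pi\mapsto\Pi'$ produces a sequence from $\Pi$ to $\Pi''$ built from one positive compression, positive flypes, positive bubble moves, and one positive trivial sphere removal. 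Taking $M$ to be a small regular neighborhood of the union of bordisms of these moves, $M$ deformation retracts onto a thickening of $\widehat\Pi$ with a three-ball attached along a disc, so $\widehat\Pi$ and $\widehat\Pi''$ are incompressible in $M$. Lemma~\ref{no-compres-lem} then replaces this sequence by one of only positive flypes and positive bubble moves.

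The principal technical obstacle is the cancellation step in part (i): one must show that the inverse-of-compression and inverse-of-sphere-removal moves can always be commuted past intervening moves until they are adjacent to their matching forward counterparts, at which point they cancel. The casework is analogous to the exchange argument already carried out in the proof of Lemma~\ref{no-compres-lem}, with additional care needed to track rectangles created and destroyed across several consecutive moves, and to verify that the pairing is well defined (i.e., that an inverse move and its eventual ``undoing'' operate on the same topological feature).
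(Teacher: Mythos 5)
The paper itself derives this proposition from Proposition~\ref{shrink-sphere-prop} and Lemma~\ref{no-compres-lem} with no further detail, so everything rests on the glue you supply, and in part~(i) that glue has a genuine gap. Your premise that ``every move that increases complexity (the two inverse types) must be matched later by a forward move that undoes it'' is false for the sequences that Proposition~\ref{shrink-sphere-prop} actually produces. In Case~2 of its proof the sequence runs $\Pi\mapsto\Pi\cup\Sigma\mapsto\Pi'\mapsto\cdots$, where the second arrow is a positive inverse of a compression tunneling the sphere $\widehat\Pi$ to the innermost normal sphere $\widehat\Sigma$; that tunnel is never cut again by a later compression --- the tunneled sphere $\widehat\Pi'$ is normalized and shrunk as a whole, and the compressions occurring in that normalization act on thin rectangles unrelated to the tunnel. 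Likewise a trivial sphere created early in the sequence is typically grown by bubble creations and then absorbed by a tunneling move, not deleted by a matching trivial sphere removal. So there is no well-defined pairing, nothing cancels, and the ``commutation-and-cancellation'' step on which part~(i) relies does not go through.

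The way the two cited results actually combine is by reversal rather than cancellation. Every maximal block of the sequence from Proposition~\ref{shrink-sphere-prop} that contains inverses is, by construction, the reversal of a sequence of \emph{negative} moves containing no inverses: in the base case, the reversed negative normalization $\Pi'\mapsto\cdots\mapsto\Pi$; in Case~2, the reversed composite of the negative compression $\Pi'\mapsto\Pi\cup\Sigma$ with the (inductively decomposed) negative removal of~$\Sigma$. Both endpoints of each such block are spheres, so the mirror (negative) version of Lemma~\ref{no-compres-lem} converts the reversed block into negative flypes and bubble moves, and reversing back gives positive flypes and bubble moves; the inverse-free blocks are handled by Lemma~\ref{no-compres-lem} directly, and the single terminal trivial sphere removal is retained. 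With part~(i) repaired this way, your part~(ii) is essentially correct and is the intended argument: apply~(i) to the spherical component inside the ball it bounds (where the moves stay disjoint from the remaining tiles), prepend the compression, check that the compression bordism and the ball have disjoint interiors, and invoke Lemma~\ref{no-compres-lem}, noting that $\widehat\Pi$ and $\widehat\Pi''$ are incompressible in the union of the bordisms, which deformation retracts onto~$\widehat\Pi$.
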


We are ready to proceed with the proof of the main result.

\begin{proof}[Proof of Theorem~\ref{diagram-exist-thm}]
First, we explain the strategy. Let~$k$ be the depth of~$\mathscr F$. Take for~$X_0$ and~$Y_0$
minimal subset of~$\mathbb S^1$ such that~$R\subset X_0\times Y_0$.

For each~$j=1,2,\ldots,k$, inductively, we define~$X_j,Y_j\subset\mathbb S^1$ so that any leaf
of depth~$j$ can be normalized with respect to~$T(X_j,Y_j)$. To do so, we use the previously normalized
leaves of depth~$\leqslant j-1$. We choose a maximal family of leaves of depth~$\leqslant j$ that can be normalized
to become pairwise normally non-parallel surfaces, normalize them, and proceed to the next~$j$.

Then we insert cavities so that all remaining leaves of~$\mathscr F$ are normalized with
respect to~$T(X_k,Y_k)$ except those that branch at a cavity. (The exceptional,
branching, leaves can also be said to be normalized, but in a non-unique way.) Finally,
all rectangles of the corresponding rectangular diagrams of surfaces are collected
into finitely many packs.

There are a few issues to address. First, we explain how to normalize individual leaves of~$\mathscr F$.
We call two leaves~$F$ and~$F'$ \emph{equivalent} if~$F=F'$ or there is an
embedding~$\iota:{}F\times[0;1]\rightarrow\mathbb S^3$ that takes~$F\times0$ and~$F\times1$
to~$F$ and~$F'$, respectively, and each arc of the form~$x\times[0;1]$ to a transversal of~$\mathscr F$. Clearly, in this case, $F$ and~$F'$ have the same depth, and~$\overline F\setminus F=\overline F'\setminus F'$.
Any leaf in~$\iota(F\times(0;1))$ which is not equivalent to~$F$ is said to be \emph{subordinary to~$F$ and~$F'$}.
Leaves of~$\mathscr F$ that are not subordinary to any other leaf are called \emph{principal}.

The general case can be reduced to the one in which all leaves are principal, so we put this additional assumption
for time being.

At the $j$th step of the procedure described above, we are going to normalize at least one leaf from
each equivalence class of leaves of depth~$j$. By this we mean that the foliation~$\mathscr F$ is modified
by an isotopy relative to the union of previously normalized leaves so that the selected leaves become normal with
respect to~$T(X_j,Y_j)$ and, after the modification, have the form~$\widehat\Pi$, where~$\Pi$ is a rectangular
diagram of a surface.

Since~$\mathscr F$ is taut, the only depth~$0$
leaves of~$\mathscr F$ are the connected components of~$\widehat\Omega_\varepsilon(R)$,
which are normal from the beginning with respect to~$T(X_0,Y_0)$ and are represented by a rectangular diagram.

Let~$1\leqslant j\leqslant k$.
Suppose that a representative of every equivalence class of leaves of depth~$\leqslant j-1$
has been normalized with respect to~$T(X_{j-1},Y_{j-1})$. We denote the union of the leaves
that have already been normalized
by~$L_{j-1}$. By a small deformation of~$\mathscr F$ fixed on~$L_{j-1}$ we can ensure
that~$\mathscr F$ has only finitely many tangencies with the circles~$\mathbb S^1_{\tau=0}$
and~$\mathbb S^1_{\tau=1}$. Define~$X_j\supset X_{j-1}$ and~$Y_j\supset Y_{j-1}$ so that these
points become vertices of~$T(X_j,Y_j)$. Note that we modify~$\mathscr F$ at every step by an isotopy,
so new points of tangency with the circles~$\mathbb S^1_{\tau=0,1}$ can arise.

Call an intersection point of a leaf~$F$ with the circle~$\mathbb S^1_{\tau=0}$ or~$\mathbb S^1_{\tau=1}$
\emph{positive} if the co-orientation of~$F$ at this point coincides
with the orientation of the circle, and \emph{negative} otherwise. Due to our choice of~$X_j$ and~$Y_j$,
for any leaf~$F$ of~$\mathscr F$ and any edge~$e$ of~$T(X_j,Y_j)$
contained in~$\mathbb S^1_{\tau=0}\cup\mathbb S^1_{\tau=1}$, either all intersections of~$F$ with~$e\setminus\partial e$ are positive, or all are negative. We then call the edge~$e$ positive or negative, accordingly.

The leaves in~$L_{j-1}$ are not tangent to the circles~$\mathbb S^1_{\tau=0}$ and~$\mathbb S^1_{\tau=1}$,
hence they are normal with respect to the new triangulation~$T(X_j,Y_j)$. Therefore,
there is an open neighborhood~$U_j$
of $L_{j-1}$ such that, for any leaf~$F$ of~$\mathscr F$
and any $3$-simplex of~$T(X_j,Y_j)$, any connected component of~$F\cap\Delta$ contained in~$U_j$
is a normal disc.

Since~$L_{j-1}$ contains at least one representative from each equivalence class of leaves of
depth at most~$j-1$, by a deformation of~$\mathscr F$ fixed on~$L_{j-1}$ we may also ensure that any leaf of depth smaller
than~$j$ that is contained in the closure of a depth~$j$ leaf is also contained in~$U_j$.

Let~$F$ be a leaf of depth~$j$. As follows from the above discussion, $F$ can be divided into
two parts with common boundary, $F'$ and~$F''$ say,
such that~$F'$ is covered by normal discs, and~$F''$ is compact. Moreover,
we can ensure that~$\partial F'=\partial F''$
consists of arcs of of the form~$\widehat v$, that is, the common boundary of~$F'$ and~$F''$ is a link
represented by a rectangular diagram (see the proof of Proposition~\ref{normal=>rect-prop}). Also,
we may assume that~$F'$ has the form~$\widehat\Pi'$, where~$\Pi'$ is a rectangular diagram of a surface.

Now we claim that~$F''$ is isotopic relative~$F'\cup L_{j-1}$ to a surface associated with a rectangular
diagram~$\Pi''$. This follows from a slightly strengthen version of~\cite[Proposition~5]{Representability}. Namely,
the isotopy in~\cite{Representability} is assumed to be fixed on a graph of the form~$\bigcup_{v\in A}\widehat v$,
where~$A$ is a finite subset of~$\mathbb T^2$, whereas here we require it to be fixed on a union of surfaces.
The proof, however, requires only a small modification at the place, where finger moves are used. Namely, one
has to define more general finger moves, which could push the surface not only along the radial direction,
but along any simple arc contained in a single page, and then choose such arcs to avoid~$F'\cup L_{j-1}$.

Thus, the entire leaf~$F$ can be deformed into a surface of the form~$\widehat\Pi$ without
disturbing the leaves contained in~$L_{j-1}$. Now we run a normalization procedure for~$\Pi$
with respect to~$(X_j,Y_j)$ according to Definition~\ref{normalize-def}. The procedure arrives
at a $T(X_j,Y_j)$-normal surface in finitely many steps for the following reason.

Each normalization step reduces a pair of intersections of the surface~$\widehat\Pi$ with an edge of~$T(X_j,Y_j)$
contained in~$\mathbb S^1_{\tau=0}\cup\mathbb S^1_{\tau=1}$, one of which is positive and the other negative.
The surface~$F$ does not have positive intersections with negative edges and negative intersections
with positive edges, hence~$\widehat\Pi$ may have only finitely many such `incorrect' intersections, since it differs from~$F$
only in a compact part.
Therefore, only finitely many steps of the normalization can be made.
Since the folitation~$\mathscr F$ is taut, the leaf~$F$ is incompressible in~$\mathbb S^3\setminus N_\varepsilon(R)$, so the normalization will result in a surface isotopic to~$F$
relative to~$L_{j-1}$.

In a similar fashion, we can normalize, with respect to~$T(X_j,Y_j)$, any finite collection of depth~$j$ leaves of~$\mathscr F$.
By a generalization of the classical Kneser's argument~\cite{kneser}, under the assumption
that all leaves are principal, the total number of depth~$j$
leaves that can be normalized simultaneously with respect to a fixed triangulation so that no two become normal parallel
is bounded by a number depending only on the triangulation.

Indeed, let~$N$ be a union of finitely many depth~$j$ leaves of~$\mathscr F$ that are normal with respect to~$T(X_j,Y_j)$,
and suppose that no two leaves in~$N$ are normal parallel. In order to obtain Kneser's estimate on the number
of connected components of~$N$ it suffices to show that there is no path-connected
component~$M$ of~$\mathbb S^3\setminus(N_\varepsilon(R)\cup N)$ such that each connected component of the intersection of~$M$
with every three-simplex~$\Delta$ of the triangulation is bounded in~$\Delta$ by two `parallel' normal discs.
If such a component exists, it has the form~$\iota(F\times(0;1))$, where~$F$ is a leaf of~$\mathscr F$,
and~$\iota:F\times[0;1]\rightarrow\mathbb S^3\setminus N_\varepsilon(R)$ is an immersion such that~$\iota(F\times0)=
\iota(F\times1)=F$. Then the image~$\iota(F\times[0;1])$ has the form~$\iota'(F'\times(0;1))$, where~$F'$
is another leaf of~$\mathscr F$, and~$\iota':F'\times[0;1]\rightarrow\mathbb S^3\setminus N_\varepsilon(R)$
is an embedding such that~$\iota'(F'\times0)=F'$. This means that the leaf~$F$ is subordinate to~$F'$, a contradiction.

Thus, we end up with the triangulation~$T(X_k,Y_k)$ and a maximal collection of pairwise normally non-parallel leaves
normalized with respect to~$T(X_k,Y_k)$, whose union is denoted by~$L_k$ and contains a representative of
every equivalence class of leaves. The closure~$\overline L_k$ also is a union of normalized leaves
(some of which may be normally parallel).

Now let~$M$ be a connected component of~$\mathbb S^3\setminus(N_\varepsilon(R)\cup\overline L_k)$.
Since~$L_k$ contains a representative from each equivalence class of leaves, the manifold~$M$
has the form~$F\times(0;1)$, where~$F$ is an abstract connected surface, and there is an immersion~$\psi:F\times[0;1]\rightarrow\mathbb
S^3$ that takes~$F\times(0;1)$ onto~$M$ homeomorphically. We assume this immersion is orientation-preserving,
and let~$F_0$ and~$F_1$ be the images of~$F\times0$ and~$F\times1$, respectively. (We may have~$F_0=F_1$.
Otherwise, $\psi$ is an embedding.) By construction, there are rectangular diagrams
of surfaces~$\Pi_0$ and~$\Pi_1$ such that~$F_0=\widehat\Pi_0$ and~$F_1=\widehat\Pi_1$.

We now claim that there are rectangular diagrams of surfaces~$\Pi_0'$ and~$\Pi_1'$ such
that:
\begin{enumerate}
\item
the associated surfaces~$\widehat\Pi_0'$ and~$\widehat\Pi_1'$ are contained in~$M$
and are normal parallel to~$\widehat\Pi_0$ and~$\widehat\Pi_1$,
respectively;
\item
$\Pi_1'$ is obtained from~$\Pi_0'$
by a sequence of positive flypes and positive bubble moves.
\end{enumerate}
To see this, there are a few cases to consider.

First, suppose that~$M$ contains a vertex~$q$ of~$T(X_k,Y_k)$.
We say that~$q$ is \emph{visible from~$\Pi_0$} (respectively, from~$\Pi_1$) if there is
an arc~$\alpha\subset\overline M$ contained in an edge of the triangulation~$T(X_k,Y_k)$ of the form~$\widehat v$, $v\in X_k\times Y_k$,
such that one of the endpoints of~$\alpha$ is~$q$, and the other lies in~$\Pi_0$ (respectively, in~$\Pi_1$).

Suppose that~$M$ contains a vertex of~$T(X_k,Y_k)$ visible from~$\Pi_0$.
For a small enough~$\delta>0$ the surface~$\psi(F\times\delta)$ is a leaf close to~$F_0$, and hence normal.
We take for~$\Pi_0'$ the rectangular diagram constructed from~$\psi(F\times\delta)$
as in the proof of Proposition~\ref{normal=>rect-prop}. Clearly, this diagram is obtained from~$\Pi_0$
by a small positive deformation.

Since~$M$ contains a vertex of the triangulation visible from~$\Pi_0$,
$\Pi_0'$ admits a positive bubble creation move~$\Pi_0'\mapsto\Pi_0''$
that results in the surface~$\widehat\Pi_0'$ jumping over a vertex of~$T(X_k,Y_k)$ while staying disjoint from~$F_1$.
The diagram~$\Pi_0''$ may have positively pre-thin rectangles, but not negatively pre-thin ones.
Let~$\Pi_1'$ be
a diagram obtained from~$\Pi_0''$ by a positive normalization (or let~$\Pi_1'=\Pi_0''$
if~$\widehat\Pi_0''$ is normal). By Lemmas~\ref{wrinkle-decomp-lem}, \ref{gen-compres-decomp-lem},
and~\ref{no-compres-lem}, $\Pi_1'$
can be obtained from~$\Pi_0'$ by a sequence of positive flypes and positive bubble moves.

Note that the bordisms associated with all the involved flypes and bubble moves are contained in~$M$.
Indeed, since~$\widehat\Pi_0''$ is disjoint from~$F_1$,
the union~$\Pi_0''\cup\Pi_1$
is also a rectangular diagram of a surface. This implies that~$\Pi_1'\cup\Pi_1$ is obtained from~$\Pi_0'\cup\Pi_1$
by a positive normalization procedure. Therefore, the moves used to transform~$\Pi_0'$ to~$\Pi_1'$ never produce a surface
that intersects~$\widehat\Pi_1$.

Since~$\widehat\Pi_1'$ is normal, it must be normal parallel to one of the leaves contained in~$L_k$.
Since~$\widehat\Pi_1'\subset M$, and~$\Pi_1'$ is obtained from~$\Pi_0'$ by a non-trivial sequence
of positive moves, $\widehat\Pi_1'$ can be normal parallel only to~$\widehat\Pi_1$.

If~$M$ does not contain a vertex of~$T(X_k,Y_k)$ visible from~$\Pi_0$, then it must contain a vertex visible from~$\Pi_1$.
In this case, we proceed similarly to the previous one, exchanging the roles of~$\Pi_0$ and~$\Pi_1$: push~$\Pi_1$ into~$M$, then apply a negative
bubble creation move, and then normalize negatively to a surface that is normal parallel to~$\widehat\Pi_0$.

The case when~$M$ contains a vertex of~$T(X_k,Y_k)$ is done.

Now suppose that~$M$ contains no vertices of~$T(X_k,Y_k)$.
The surface~$F$ is not compact, but `at infinity' the manifold~$M$ becomes `thin', which formally means that~$F$
can be divided into two parts~$F'$ and~$F''$ such that, for all~$x\in[0;1]$, $\psi(F'\times x)$ is covered by normal discs contained in~$\psi(F\times x)$, and~$F''$
is compact. Moreover, we may assume that the preimage of the one-skeleton of~$T(X_k,Y_k)$ under~$\psi|_{F'\times[0;1]}$
consists of arcs of the form~$p\times[0;1]$.

Now we consider all possible ways to change the foliation~$\mathscr F$ by an isotopy supported
in~$\psi(F''\times(0;1))$ and apply the thin position argument from~\cite{gabai-87,thompson}.
The feature of the present situation compared to~\cite{gabai-87,thompson} is that
the \emph{width} of the foliation is defined by counting only those intersections of leaves
with the edges of~$T(X_k,Y_k)$ that are contained in~$\psi(F''\times(0;1))$.
Following the lines of the proof of Lemma~4 in~\cite{thompson}, one can show that one of the following
holds:
\begin{enumerate}
\item
$\mathscr F$ can be changed by an isotopy supported in~$\psi(F''\times(0;1))$ so that
a leaf~$L$ of~$\mathscr F$ contained in~$M$ becomes almost normal with respect to~$T(X_k,Y_k)$;
\item
there exists a $T(X_k,Y_k)$-normal sphere contained in~$M$.
\end{enumerate}

Suppose that the former occurs. Then it follows from Proposition~\ref{almost-norm-rect-prop}
and Lemma~\ref{only-positive-lem}
that there exist rectangular diagrams~$\Pi_0''$ and~$\Pi_1''$ such that:
\begin{enumerate}
\item
$\Pi_0''$ normalizes negatively to a rectangular diagram~$\Pi_0'$;
\item
$\Pi_1''$ normalizes positively to a rectangular diagram~$\Pi_1'$;
\item
either~$\Pi_0''\mapsto\Pi_1''$ is a positive flype with associated bordism containing a surface
which is normal isotopic to~$L$, or~$\Pi_0''=\Pi_1''$ and~$\widehat\Pi_0''$ is normal isotopic to~$L$.
\end{enumerate}
Lemmas~\ref{wrinkle-decomp-lem}, \ref{gen-compres-decomp-lem}, and~\ref{no-compres-lem}
imply that~$\Pi_1'$ is obtained from~$\Pi_0'$ by a sequence of positive
moves that include only flypes and bubble moves.

Similarly to the previous case, one can show that~$\Pi_0''$ and~$\Pi_1''$
can be chosen so as to satisfy the additional condition~$\widehat\Pi_0'',\widehat\Pi_1''\subset M$,
which then implies~$\widehat\Pi_0',\widehat\Pi_1'\subset M$.

The surfaces~$\widehat\Pi_0'$ and~$\widehat\Pi_1'$ are normal, so they are
normal parallel to some surfaces contained in~$L_k$. Since~$\widehat\Pi_0',\widehat\Pi_1'\subset M$
they can be normal parallel only to~$F_0$ and~$F_1$. The case
when~$\widehat\Pi_0'$ is normal parallel to~$F_1$ is impossible, since
then~$\Pi_0'\cup\Pi_1$ would not admit positive moves leaving~$\Pi_1$ intact. Therefore,
$\widehat\Pi_0'$ is normal parallel to~$F_0$. For a similar reason, $\widehat\Pi_1'$ is
normal parallel to~$F_1$.

We are done with the case of an almost normal leaf~$L$.

It remains to consider the case when~$M$ contains a normal two-sphere. Let~$\Sigma$
be a rectangular diagram representing such a sphere. Suppose that~$\widehat\Sigma$
is `visible' from~$\widehat\Pi_0$ in the sense that that there is a subarc~$\alpha$
of an arc of the form~$\widehat v$, $v\in\mathbb T^2$, such that~$\alpha\subset M$
and~$\partial\alpha\subset\widehat\Pi_0\cup\widehat\Sigma$. Take for~$\Pi_0'$
a rectangular diagram obtained from~$\Pi_0$ by a small positive deformation
such that~$\widehat\Pi_0'$ is contained in~$M$ and separates~$F_0$ from~$\widehat\Sigma$.

Using Lemma~\ref{connected-sum-lem}, find a rectangular diagram of a surface~$\Pi_0''$
such that~$\Pi_0''\mapsto\Pi_0'\cup\Sigma$ is a negative compression. Let~$\Pi_0''$
normalize positively to~$\Pi_1'$. Similarly to the previous cases, we will have
that~$\widehat\Pi_1'$ is normal parallel to~$F_1$, and~$\Pi_1'$ is obtained from~$\Pi_0'$
by a sequence of positive flypes and positive bubble moves.

If~$\widehat\Sigma$ is not visible from~$\widehat\Pi_0$, it must be visible from~$\widehat\Pi_1$,
in which case we proceed similarly exchanging the roles of~$\Pi_0$ and~$\Pi_1$.

We are done with proving the existence of~$\Pi_0'$ and~$\Pi_1'$ having the announced properties.
Those properties imply that~$\overline M$ is a disjoint union of three parts~$M_0$, $M_1$, and~$C(M)$ such
that~$C(M)$ is a cavity, $M_0$ is enclosed by~$F_0$ and~$\widehat\Pi_0'$,
and~$M_1$ is enclosed by~$F_1$ and~$\widehat\Pi_1'$.
Moreover, each of the parts~$M_0$ and~$M_1$ can be filled with a codimension-one
foliation all leaves of which are represented by rectangular diagrams of surfaces obtained
from~$\Pi_0$ or~$\Pi_1'$, respectively, by positive deformations. The agregate of these two foliations
turn into the original foliation~$\mathscr F|_{\overline M}$ after deflating~$C(M)$.

Repeat the procedure described above for every path-connected component~$M$ of~$\mathbb S^3\setminus(\overline L_k\cup N_\varepsilon(R))$.
We obtain finitely many continuous one-parametric families~$\{\Pi_i,t\}_{t\in[0;1]}$ of rectangular diagrams of surfaces and a cavity~$C$ (that
may have several connected components) such that all surfaces~$\widehat\Pi_{i,t}$ become, after deflating~$C$, leaves
of a foliation~$\mathscr F'$ in~$\mathbb S^3\setminus N_\varepsilon(R)$ isotopic to~$\mathscr F$ relative to~$\widehat\Omega_\varepsilon(R)$.

Let~$\mathscr R$ be the union of all the diagrams~$\Pi_{i,t}$. Introduce an equivalence relation on~$\mathscr R$
as follows. Two rectangles~$r_1,r_2\in\mathscr R$ are said to be equivalent if there is a point~$v\in X_k\times Y_k$
such that~$v\in (r_1\setminus\partial r_1)\cap(r_2\setminus\partial r_2)$ and the subarc of~$\widehat v$ with endpoints~$\widehat v\cap\widehat r_1$ and~$\widehat v\cap\widehat r_2$
is contained in~$\bigcup_{r\in\mathscr R}\widehat r$. Since~$C$ has only finitely many components, $\mathscr R$
splits into finitely many equivalence classes, which we denote by~$P_1,P_2,\ldots,P_k$. Order rectangles
in each class as follows. Let~$r_1,r_2\in P_i$
be such that~$r_1$ overlays~$r_2$, and let~$v$ be as above. Let~$\alpha$ be the subarc of~$\widehat v$
with endpoints~$p_1=\widehat v\cap\widehat r_1$ and~$p_2=\widehat v\cap\widehat r_2$. Orient~$\alpha$
according to the coorientation of the foliation and put~$r_1>r_2$ in~$P_i$ if~$\partial\alpha=p_1-p_2$,
and~$r_1<r_2$ otherwise.

One can see that~$P_i$, $i=1,\ldots,k$, are packs of rectangles, and the collection of them forms
a rectangular diagram~$\Xi$ of a foliation such that~$\widehat\Xi$ is isotopic to~$\mathscr F$
relative to~$\widehat\Omega_\varepsilon(R)$.

The proof of Theorem~\ref{diagram-exist-thm} is now complete in the case when all leaves of the given
foliation are principal. Now we reduce the general case to this one.

Let~$k_0$ be the smallest integer such that all leaves of~$\mathscr F$ of depth~$\leqslant k_0$ are principal. The proof
is by induction in the pair~$(k-k_0,l)$, where~$l$ is defined below.
The induction base, $k=k_0$, has already been settled. Suppose that~$k_0<k$. For the same
reason as before, the number of equivalence classes of depth~$k_0$ leaves is finite. We denote by~$l$
the number of equivalence classes of depth~$k_0$ leaves~$F$ such that there is a leaf subordinate to~$F$.

Suppose that there is a leaf subordinate to a leaf~$F$ of depth~$k_0$. Then there is
submanifold~$M\subset\mathbb S^3\setminus N_\varepsilon(R)$ that contains all leaves equivalent to~$F$
and has the form~$\psi(F\times[0;1])$,
where~$\psi$ is an embedding such that~$\psi(F\times0)$ and~$\psi(F\times1)$ are leaves of~$\mathscr F$ equivalent to~$F$,
and all arcs of the form~$\psi(x\times[0;1])$ are transverse to~$\mathscr F$.

Now let~$\mathscr F'$ be the foliation that coincides with~$\mathscr F$ outside~$M$ and has the product
structure~$F\times[0;1]$ induced by~$\psi$ inside~$M$. Removing the interior of~$M$ from~$\mathbb S^3$ and identifying~$\psi(F\times0)$
with~$\psi(F\times1)$ yields another finite depth taut foliation in~$\mathbb S^3\setminus N_\varepsilon(R)$, which we denote by~$\mathscr F''$.
The foliation~$\mathscr F''$ is `simpler' than~$\mathscr F$ in the sense that the value of~$k-k_0$ for~$\mathscr F''$ is not larger
than that for~$\mathscr F$, and at least one of~$k-k_0$ and~$l$ is strictly smaller.

So, by the induction hypothesis, $\mathscr F''$ can be represented by a rectangular diagram, which means that
all leaves of~$\mathscr F''$ can be normalized,  after inserting some cavities, with respect to a triangulation of the form~$T(X,Y)$.
The foliation~$\mathscr F'$ can be obtained from~$F''$ by the Denjoy trick, which replaces a single leaf~$F$ with~$F\times[0;1]$.
Therefore, all leaves of~$F'$ can also be normalized, after inserting the same cavities, so that all leaves in~$F\times[0;1]$
are normal parallel to each other. Then the foliation of~$F\times[0;1]$ can be replaced by~$\mathscr F|_M$ so
that all leaves remain normal, which yields a normalization of all leaves of~$\mathscr F$. The induction step follows,
which completes the proof of Theorem~\ref{diagram-exist-thm} in the general case.
\end{proof}

\section{General finite depth foliations on~$\mathbb S^3$}\label{finite-depth-sec}
In the proof of Theorem~\ref{diagram-exist-thm}, the tautness of the foliation~$\mathscr F$ was used twice:
first, when we claim that the only depth zero leaves are boundary components of the link complement,
and second, when we conclude that all leaves are incompressible in the link complement. It follows from Novikov's theorem~\cite{novikov65}
that for the latter it suffices to demand only that~$\mathscr F$ is Reebless. The former condition is
actually not important. Indeed, if~$\mathscr F$ is Reebless and has compact leaves other than the boundary components, then
they can be easily normalized with respect to~$T(X_0,Y_0)$. So, the tautness hypothesis in
Theorem~\ref{diagram-exist-thm} can be weakened to the Reebless one.

The Reebless condition can also be dropped due to the following reason. The given foliation
can be deformed so that the core of each Reeb component is a knot of the form~$\widehat K$,
where~$K$ is a rectangular diagram of a knot. The union of cores of all Reeb components
can be added to the given link, and the foliation obtained from the given one
by drilling out the Reeb components can be represented by a rectangular diagram.
Each Reeb component, even knotted, of the original foliation can then also be represented
by a rectangular diagram in a way similar to the one described in Section~\ref{discs-sec}
for the case of an unknotted Reeb component.

Thus, with a little additional work, we have the following.

\begin{theo}
Assertion of Theorem~\ref{diagram-exist-thm} holds without the tautness hypothesis on the foliation~$\mathscr F$.
\end{theo}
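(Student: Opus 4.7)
The plan is to proceed in two reductions, matching the two places where tautness enters the proof of Theorem~\ref{diagram-exist-thm}. First I would weaken tautness to the Reebless condition. Tautness was used only to ensure (a) that the sole depth-zero leaves are the boundary components of the link complement, and (b) that every leaf is incompressible. For (b), Novikov's theorem~\cite{novikov65} gives incompressibility of all leaves from Reeblessness alone. For (a), any Reebless finite-depth foliation on $\mathbb S^3 \setminus N_\varepsilon(R)$ can have compact leaves other than $\widehat\Omega_\varepsilon(R)$, but these are incompressible tori or surfaces to which the normalization machinery of Section~\ref{normal-sec} applies without change: they can be brought into the form $\widehat\Pi$ at the base step of the depth induction by enlarging $X_0$ and $Y_0$ if necessary. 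After this observation the entire proof of Theorem~\ref{diagram-exist-thm} carries over verbatim under the Reebless hypothesis.

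Second, I would remove the Reebless hypothesis by surgering out the Reeb components. Since $\mathscr F$ has finite depth, it contains only finitely many Reeb components, and each of them has a well-defined core knot. By an isotopy of $\mathbb S^3$ supported away from $\widehat\Omega_\varepsilon(R)$ I would arrange that each such core has the form $\widehat K_j$ for some rectangular diagram of a knot $K_j$; this uses only that every tame knot is isotopic to one representable by a rectangular diagram. Let $\widetilde R = R \cup K_1 \cup \ldots \cup K_m$ and let $\widetilde{\mathscr F}$ be the foliation obtained by drilling out a thin tubular neighborhood $N_{\varepsilon_j}(K_j)$ of each core and making the new boundary tori into leaves. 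Then $\widetilde{\mathscr F}$ is a Reebless finite-depth foliation on $\mathbb S^3 \setminus N_\varepsilon(\widetilde R)$, so the first reduction supplies a rectangular diagram of a foliation $\widetilde\Xi$ representing it.

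It remains to reinsert each Reeb component as a rectangular diagram of a foliation in the solid torus $N_{\varepsilon_j}(K_j) \setminus \widehat K_j$, and to glue the result to $\widetilde\Xi$. For each $j$, I would mimic the construction of Section~\ref{discs-sec}: start from a rectangular diagram $\Pi^{(j)}$ of a meridional disc wound onto $\widehat\Omega_{\varepsilon_j}(K_j)$, exhibit an explicit positive deformation $f^{(j)}$ whose time-$1$ image differs from $\Pi^{(j)}$ by a finite sequence of positive flypes and positive bubble moves, and assemble the orbit of $f^{(j)}$ into finitely many packs of rectangles. The discrepancy between $\Pi^{(j)}$ and $f^{(j)}_1(\Pi^{(j)})$ produces an open three-ball cavity disjoint from $\widehat K_j$, exactly as in the unknotted model. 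The union of the resulting packs with those of $\widetilde\Xi$ will be the desired rectangular diagram $\Xi$.

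The main obstacle will be the gluing at the tubes $\widehat\Omega_{\varepsilon_j}(K_j)$: I must verify that the packs coming from the Reeb representation are almost compatible with those of $\widetilde\Xi$ whose minimal or maximal rectangles are contained in $\Omega_{\varepsilon_j}(K_j)$, and that the cavities produced by the Reeb construction are disjoint from all cavities implicit in $\widetilde\Xi$. The key is that $\widetilde\Xi$ contains $\Omega_{\varepsilon_j}(K_j)$ as a subfamily of $\Pi_{\min}(\widetilde\Xi)$, while the outermost packs of the Reeb construction have their first rectangles precisely in $\Omega_{\varepsilon_j}(K_j)$ but with opposite co-orientation; thus the matching along corner arcs is forced and can be arranged by a preliminary positive deformation of the tube parameter $\varepsilon_j$. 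Once this gluing is verified, $\widehat\Xi$ is isotopic to $\mathscr F$ relative to $\widehat\Omega_\varepsilon(R)$, completing the proof.
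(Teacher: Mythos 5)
Your proposal follows essentially the same two-step route as the paper: first weaken tautness to Reeblessness (Novikov for incompressibility, direct normalization of the extra compact leaves), then drill out the Reeb components, add their cores to the link, and reinsert each Reeb component via the construction of Section~\ref{discs-sec}. In fact the paper only sketches this argument, so your discussion of the gluing along the tubes $\widehat\Omega_{\varepsilon_j}(K_j)$ supplies more detail than the paper itself does.
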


It is plausible that the finite depth hypothesis in Theorem~\ref{diagram-exist-thm} can also be dropped, but
the proof will require a different method. We are going to address this question in a future work.


\begin{thebibliography}{10}
\bibitem{dynn06}
I.\,Dynnikov.
Arc-presentations of links: monotonic simplification.
\emph{Fund.\ Math.} {\bf190} (2006), 29--76.
\bibitem{Representability}
I.\,Dynnikov, M.\,Prasolov.
Rectangular diagrams of surfaces: representability. (Russian)
\emph{Matem.\ Sb.} {\bf208} (2017), no.~6,
55--108; translation in \emph{Sb.\ Math.} {\bf208} (2017), no.~6, 781--841;
\href{https://arxiv.org/abs/1606.03497}{arXiv:1606.03497}.
\bibitem{Distinguishing}
I.\,Dynnikov, M.\,Prasolov.
Rectangular diagrams of surfaces: distinguishing Legendrian knots.
\emph{J.\ Topol.} {\bf14} (2021), no.~3, 701--860;
\href{https://arxiv.org/abs/1712.06366}{arXiv:1712.06366}.
\bibitem{Basic_moves}
I.\,Dynnikov, M.\,Prasolov.
Rectangular diagrams of surfaces: the basic moves.
\emph{Proceedings of the International Geometry Center} {\bf13} (2020), no.~4, 50--88;
\href{https://arxiv.org/abs/2011.04995}{arXiv:2011.04995}.
\bibitem{dynpras25}
I.\,Dynnikov, M.\,Prasolov.
An algorithm for comparing Legendrian knots.
\emph{Preprint}, \href{https://arxiv.org/abs/2309.05087}{arXiv:2309.05087}.
\bibitem{dynshast}
I.\,Dynnikov, V.\,Shastin.
Distinguishing Legendrian knots with trivial orientation-preserving symmetry group.
\emph{Algebr.\ Geom.\ Topol.} {\bf23} (2023), no.~4, 1849--1889.
\bibitem{etnyrehonda}
J.\,Etnyre, K.\,Honda.
Knots and contact geometry. I. Torus knots and the figure eight knot.
\emph{J.\ Symplectic Geom.} {\bf1} (2001), no.~1, 63--120.
\bibitem{gabai83}
D.\,Gabai.
Foliations and the topology of 3-manifolds.
\emph{J.\ Differential Geom.} {\bf18} (1983), no.~3, 445--503.
\bibitem{gabai-87}
D.\,Gabai.
Foliations and the topology of $3$-manifolds III.
\emph{J.\ Diff.\ Geometry} {\bf26} (1987), no.~3, 479--536.
\bibitem{kneser}
H.\,Kneser.
Geschlossene Fl\"achen in dreidimensionalen Mannigfaltigkeiten.
\emph{Jahresbericht der Deutschen Mathematiker--Vereinigung} {\bf38} (1929), 248--260.
\bibitem{novikov65}
S.\,P.\,Novikov.
The topology of foliations. (Russian)
\emph{Trudy Moskov.\ Mat.\ Obsc.} {\bf14} (1965), 248--278;
translation in
\emph{Trans.\ Mosc.\ Math.\ Soc.} {\bf14} (1965), 268--304.
\bibitem{rubinstein}J.\,H.\,Rubinstein.
Polyhedral minimal surfaces, Heegaard splittings and decision problems for 3-dimensional manifolds.
\emph{Geometric topology \emph(Athens, GA, 1993\emph)}, 1--20,
AMS/IP Stud.\ Adv.\ Math., 2.1, \emph{Amer.\ Math.\ Soc., Providence, RI, 1997.}
\bibitem{thompson}
A.\,Thompson.
Thin position and the recognition problem for~$\mathbb S^3$.
\emph{Math.\ Res.\ Let.} {\bf1} (1994), 613--630.
\bibitem{thurston86}
W.\,P.\,Thurston.
A norm for the homology of 3-manifolds.
\emph{Mem.\ Amer.\ Math.\ Soc.} {\bf59} (1986), no.~339, 99--130.
\end{thebibliography}
\end{document}